 \def\LaTeX{\leavevmode L\raise.42ex
   \hbox{\kern-.3em\size{\sf@size}{0pt}\selectfont A}\kern-.15em\TeX}
\newcommand{\BibTeX}{{\rm B\kern-.05em{\sc
i\kern-.025emb}\kern-.08em\TeX}}
\newtheorem{col}{Corollary}[section]
\newtheorem{thm}{Theorem}[section]
\newtheorem{lem}[thm]{Lemma}
\theoremstyle{definition}
\newtheorem{defn}{Definition}
\newtheorem{exmp}{Example}
\newtheorem{rem}{Remark}
\numberwithin{equation}{section}
\begin{document}

\title{Sampling in Paley-Wiener spaces on combinatorial graphs}

\author{Isaac Pesenson}
\address{Department of Mathematics, Temple University,
Philadelphia, PA 19122} \email{pesenson@temple.edu}

\keywords{Combinatorial graph, combinatorial Laplace operator,
discrete Paley-Wiener spaces, Shannon sampling, discrete
Plancherel-Polya and Poincare inequalities.} \subjclass{42C99,
05C99, 94A20; Secondary  94A12 }

 \begin{abstract} A notion of Paley-Wiener spaces is introduced
  on  combinatorial graphs. It is shown that functions
  from some of these  spaces are uniquely determined by their values on
  some sets of vertices which are called the uniqueness sets. Such uniqueness
   sets are described in terms of Poincare-Wirtinger-type
   inequalities. A reconstruction algorithm of Paley-Wiener
    functions from uniqueness sets which   uses the idea of frames in Hilbert spaces
  is developed. Special consideration is given to $n$-dimensional lattice,
     homogeneous trees, and eigenvalue and eigenfunction problems on finite graphs.

\end{abstract}

\maketitle

 \section{Introduction and Main Results}

 The goal of the paper is to develop a sampling theory of Paley-Wiener functions
(bandlimited functions) on combinatorial graphs. It is shown that
functions which involve only "low" frequencies can be perfectly
reconstructed from their values on some  subsets of vertices. Note
that on a continuous manifold for any frequency one can construct
a sampling set of sufficient  density which will allow
reconstruction of that frequency.

Let us remind some basic facts from the classical sampling theory.
A function $f\in L_{2}(\mathbb{R})$ is called $\omega$-bandlimited
if its $L_{2}$-Fourier transform
$$
\hat{f}(t)=\int_{-\infty}^{+\infty} f(x)e^{-2\pi i x t}dx
$$
has support in $[-\omega,\omega]$. The Paley-Wiener theorem states
that $f\in L_{2}(\mathbb{R})$ is $\omega$-bandlimited if and only
if
 $f$ is an entire function of exponential type not exceeding $2\pi
 \omega$. $\omega$-bandlimited functions form the
 Paley-Wiener class $PW_{\omega}(\mathbb{R})$
  and often called  Paley-Wiener functions.
The classical sampling theorem says, that if $f$ is
$\omega$-bandlimited then $f$ is completely determined by its
values at points $j/2\omega, j\in \mathbb{Z}$, and can be
reconstructed in a stable way from the samples $f(j/2\omega)$,
i.e.
\begin{equation}
 f(x)= \sum_{j\in \mathbb{Z}} f\left(\frac{j}{2\omega}\right)\frac{\sin(2\pi\omega
(x-j/2\omega))}{2\pi \omega (x-j/2\omega )},
\end{equation}
where convergence is understood in the  $L_{2}$-sense. The formula
(1.1)  involves regularly spaced points $j/2\omega, j\in
\mathbb{Z}$. If one would like to consider irregular sampling at a
sequence of points $\{x_{j}\}$ and still have a stable
reconstruction from the samples $f(x_{j})$ then the following
Plancherel-Polya inequality \cite{PP1}, \cite{PP2}, should hold
true
\begin{equation}
C_{1}\left(\sum_{j\in
\mathbb{Z}}|f(x_{j})|^{2}\right)^{1/2}\leq\|f\|_{L_{2}(\mathbb{R})}\leq
C_{2}\left(\sum_{j\in \mathbb{Z}}|f(x_{j})|^{2}\right)^{1/2}.
\end{equation}
 There
is a classical result of Duffin and Schaeffer \cite{DS}, that the
inequalities (1.2) imply existence of a \textit{dual frame}
$\left\{\Theta_{j}\right\}$  which consists of functions in
$PW_{\omega}(\mathbb{R})$ such that any function $f\in
PW_{\omega}(\mathbb{R})$ can be reconstructed according to the
following formula
\begin{equation}
f(x)=\sum_{j\in \mathbb{Z}}f(x_{j})\Theta_{j}(x).
\end{equation}
A similar   approach can be developed for the  Paley-Wiener spaces
$PW_{\omega}(\mathbb{R}^{d})$.  The formula (1.3) is a
generalization of the formula (1.1) because it can be used for
non-uniformly spaced sets of sampling points.

The theory of irregular sampling was very active for many years
 \cite{B64}, \cite{BM67}, \cite{DS}, \cite{Lan67},
\cite{PW34}, \cite{OS}, \cite{LS02}, \cite{Z}. Some of the ideas
and methods of this theory were recently extended to the cases of
Riemannian manifolds, symmetric spaces, groups, and quantum graphs
\cite{EEKK1}, \cite{EEKK2},
 \cite{FP1}, \cite{FP2}, \cite{F}, \cite{FG}, \cite{Pes98}- \cite{Pes062}.

In the present article the
  Paley-Wiener spaces are introduced on  combinatorial
  graphs  and
  a corresponding sampling theory is developed which  resembles the classical
  one. Namely it is shown that Paley-Wiener functions of  low type
  are uniquely determined by their values on certain subgraphs (uniqueness sets) and
  can be reconstructed from such sets in a stable way. A
  description and examples of some of uniqueness sets are given.
 A reconstruction method is presented which
  gives a formula of the type (1.3) in terms of dual frames.
  More detailed consideration is given
  to particularly interesting cases
  of $n$-dimensional lattice $\mathbb{Z}^{n}$, homogeneous trees and finite graphs.
   Applications to eigenvalue and eigenfunction problems
on finite graphs are also considered.

It seems that our results can find different applications in
signal analysis, imaging, learning theory and discrete tomography
\cite{HK}, \cite{SS1}, \cite{SS2}.

 We know just three papers \cite{FrT}, \cite{G},
\cite{MSW} in which authors consider sampling on $\mathbb{Z}^{n}$
and on $\mathbb{Z}_{N}$, but our approach to the problem and our
results are very different from the methods of these papers.

 The following is a summary of main
notions and results. We consider finite or infinite and in this
case countable connected graphs $G=(V(G),E(G))$,  where $V(G)$ is
its set of vertices and $E(G)$ is its set of edges. We consider
only simple (no loops, no multiple edges) undirected unweighted
graphs. A number of vertices adjacent to a vertex $v$ is called
the degree of $v$ and denoted by $d(v)$. We assume that degrees of
all vertices are bounded from above and we use the notation
$$
d(G)=\sup_{v\in V(G)}d(v).
$$
 The space $L_{2}(G),$ is the Hilbert space of all
complex-valued functions $f:V(G)\rightarrow \mathbb{C}$ with the
following inner product
$$
\left<f,g\right>= \sum_{v\in V(G)}f(v)\overline{g(v)}
$$
and the following norm
\begin{equation}
\|f\|=\|f\|_{0}=\left(\sum_{v\in V(G)}|f(v)|^{2}\right)^{1/2}.
\end{equation}

The discrete Laplace operator $\mathcal{L}$ is defined by the
formula  \cite{Ch}
\begin{equation}
\mathcal{L}f(v)=\frac{1}{\sqrt{d(v)}}\sum_{v\sim
u}\left(\frac{f(v)}{\sqrt{d(v)}}-\frac{f(u)}{\sqrt{d(u)}}\right),
f\in L_{2}(G),
\end{equation}
where $v\sim u$ means that $v, u\in V(G)$ are connected by an
edge. It is known that the Laplace operator $\mathcal{L}$ is a
bounded operator in $L_{2}(G)$ which is self-adjoint and positive
definite.  Let $\sigma(\mathcal{L})$ be the spectrum of a
self-adjoint positive definite operator $\mathcal{L}$ in
$L_{2}(G)$. In what follows we will use the notations
$$
\omega_{\min}=\inf_{\omega \in \sigma(\mathcal{L})}\omega,
\omega_{\max}=\sup_{\omega \in \sigma(\mathcal{L})}\omega.
$$

According to the spectral theory \cite{BS} there exist a
direct integral of Hilbert spaces $X=\int X(\tau)dm (\tau )$ and a
unitary operator $F$ from $L_{2}(G)$ onto $X$, which transforms
the domain of $\mathcal{L}^{s}, s\geq 0,$ onto $X_{s}=\{x\in
X|\tau^{s}x\in X \}$ with norm
$$
\|x(\tau )\|_{X_{s}}= \left (\int_{\sigma(\mathcal{L})} \tau^{2s}
\|x(\tau )\|^{2}_{X(\tau)} d m(\tau) \right )^{1/2}
$$
and $F(\mathcal{L}^{s} f)=\tau ^{s} (Ff)$.
 We introduce the following notion of discrete Paley-Wiener spaces.
\begin{defn}
Given an $\omega\geq 0$ we will say that a function $f$ from
$L_{2}(G)$ belongs to the Paley-Wiener space $PW_{\omega}(G)$ if
its "Fourier transform" $Ff$ has support in
 $[0, \omega ] $.
\end{defn}

\begin{rem}
To be more consistent with the definition of the classical
Paley-Wiener spaces we should consider the interval
$[0,\omega^{2}]$ instead of $[0,\omega]$. We prefer our choice
because it makes formulas and notations simpler.
\end{rem}

Since the operator $\mathcal{L}$ is bounded every function from
$L_{2}(G)$ belongs to a certain  Paley-Wiener space
$PW_{\omega}(G)$ for some $\omega\in \sigma(\mathcal{L})$ and we
have the following stratification
\begin{equation}
L_{2}(G)=PW_{\omega_{\max}}(G)=\bigcup_{ \omega\in
\sigma(\mathcal{L})}PW_{\omega}(G), PW_{\omega_{1}}(G)\subseteq
PW_{\omega_{2}}(G), \omega_{1}<\omega_{2}.
\end{equation}

Different properties of the spaces $PW_{\omega}(G)$ and in
particular a generalization of the Paley-Wiener Theorem are
collected in the Theorem 2.1.

In Theorems 5.2, 5.5 and 5.9 it is  shown that if a graph $G$ is
an $n$-dimensional lattice or a homogeneous tree then  a
difference between two Paley-Wiener spaces can be recognized only
at infinity.

\begin{defn}
We say that a set of vertices $U\subset V(G)$ is a uniqueness set
for a space  $PW_{\omega}(G), \omega>0,$ if for any two functions
from $PW_{\omega}(G)$ the fact that they coincide on $U$ implies
that they coincide on $V(G)$.
\end{defn}
 For a subset $S\subset V(G)$ (finite or infinite) the
notation $L_{2}(S)$ will denote the space of all functions from
$L_{2}(G)$ with support in $S$:
$$
L_{2}(S)=\{\varphi\in L_{2}(G), \varphi(v)=0, v\in V(G)\backslash
S\}.
$$
\begin{defn}
We say that a set of vertices $S\subset V(G)$ is a $\Lambda$-set
if for any $\varphi\in L_{2}(S)$ it admits a Poincare  inequality
with a constant $\Lambda=\Lambda(S)>0$
\begin{equation}
\|\varphi\|\leq \Lambda\|\mathcal{L}\varphi\|, \varphi\in
L_{2}(S).
\end{equation}
The infimum of all $\Lambda>0$ for which $S$ is a $\Lambda$-set
will be called the Poincare constant of the set $S$ and denoted by
$\Lambda(S)$.
\end{defn}
 In the
Theorem 3.4 we give several estimates of the constant $\Lambda$
for finite sets. The role of $\Lambda$-sets is explained in the
following Theorem which is one of the main observations made in
this paper.

\begin{thm}
If a set $S\subset V(G)$  is a $\Lambda$-set,  then
 the set  $U=V(G)\backslash S$ is a
uniqueness set for any space $PW_{\omega}(G)$ with $0< \omega<
1/\Lambda$.
\end{thm}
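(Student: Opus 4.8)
The plan is to reduce the uniqueness statement to a single norm inequality by exploiting linearity, and then to play the Poincaré inequality (1.7) against a Bernstein-type estimate for bandlimited functions. First I would take two functions $f,g\in PW_{\omega}(G)$ that agree on $U=V(G)\backslash S$ and set $h=f-g$. Since $PW_{\omega}(G)$ is a linear subspace of $L_{2}(G)$, we have $h\in PW_{\omega}(G)$, and since $h$ vanishes on $U$ it is supported on $S$, that is $h\in L_{2}(S)$. Thus the problem becomes: show that the only function lying in both $PW_{\omega}(G)$ and $L_{2}(S)$ is the zero function, provided $\omega<1/\Lambda$.

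The key observation is that membership in $L_{2}(S)$ and membership in $PW_{\omega}(G)$ force two inequalities pointing in opposite directions. Because $h\in L_{2}(S)$ and $S$ is a $\Lambda$-set, the Poincaré inequality (1.7) gives $\|h\|\leq \Lambda\|\mathcal{L}h\|$. On the other hand, because $h$ is $\omega$-bandlimited, its spectral ``Fourier transform'' $Fh$ is supported in $[0,\omega]$, and the spectral representation of $\mathcal{L}$ from the direct-integral picture yields the Bernstein-type estimate $\|\mathcal{L}h\|\leq \omega\|h\|$; indeed
\[
\|\mathcal{L}h\|^{2}=\int_{0}^{\omega}\tau^{2}\|(Fh)(\tau)\|^{2}_{X(\tau)}\,dm(\tau)\leq \omega^{2}\int_{0}^{\omega}\|(Fh)(\tau)\|^{2}_{X(\tau)}\,dm(\tau)=\omega^{2}\|h\|^{2}.
\]
This is exactly the kind of property collected in Theorem 2.1, so I would cite the Bernstein inequality from there rather than reprove it.

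Chaining the two inequalities gives $\|h\|\leq \Lambda\|\mathcal{L}h\|\leq \Lambda\omega\|h\|$, hence $(1-\Lambda\omega)\|h\|\leq 0$. The hypothesis $\omega<1/\Lambda$ makes $\Lambda\omega<1$, so the factor $1-\Lambda\omega$ is strictly positive, which forces $\|h\|=0$, i.e. $h=0$ and $f=g$ on all of $V(G)$. This proves that $U$ is a uniqueness set.

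There is no genuine analytic obstacle here: the whole argument is a two-line squeeze once both inequalities are in hand. The only step requiring care—and the one I would dwell on—is the Bernstein estimate $\|\mathcal{L}h\|\leq \omega\|h\|$, since that is where the definition of $PW_{\omega}(G)$ through the support of the spectral transform is actually used, while everything else is formal. It is also worth noting that the strictness of the bound $\omega<1/\Lambda$ is essential: at the endpoint $\omega=1/\Lambda$ the squeeze only yields $\|h\|\leq\|h\|$, which is vacuous, so the argument genuinely needs the open interval.
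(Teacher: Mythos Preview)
Your proposal is correct and follows essentially the same argument as the paper: set $h=f-g$, observe $h\in PW_{\omega}(G)\cap L_{2}(S)$, and combine the Poincar\'e inequality $\|h\|\leq\Lambda\|\mathcal{L}h\|$ with the Bernstein inequality $\|\mathcal{L}h\|\leq\omega\|h\|$ from Theorem~2.1 to force $h=0$ when $\omega<1/\Lambda$. The only cosmetic difference is that the paper phrases the final step as a contradiction (assuming $h\neq 0$ and deriving $\|h\|<\|h\|$) whereas you write it directly as $(1-\Lambda\omega)\|h\|\leq 0$.
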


 Since $L_{2}(G)=PW_{\omega_{\max}}(G)$
one cannot expect that non-trivial uniqueness sets there exist for
functions from every Paley-Wiener subspace $PW_{\omega}(G)$ with
any $\omega_{\min}\leq \omega\leq\omega_{\max}$. Indeed, otherwise
it would mean that certain subsets of vertices can be removed from
a graph without changing spectral properties of $\mathcal{L}$. But
it is reasonable to expect that uniqueness sets exist for
Paley-Wiener spaces $PW_{\omega}(G)$ with relatively small
$\omega>0$.

It is  shown in this article that   for every graph $G$ there
exists a constant $1<\Omega_{G}<\omega_{\max}$ such that for
$0<\omega<\Omega_{G}$ functions from $PW_{\omega}(G)$  can be
determined by using their values only on certain subsets of
vertices. For instance,  one can show that when $S$ is a vertex
$v$ in a graph $G$ then the constant $\Lambda=\Lambda(v)$ in (1.7)
equals
$$
\frac{1}{\sqrt{2}}\leq
\Lambda(v)=\frac{1}{\sqrt{1+\frac{1}{d(v)}\sum_{w\sim
v}\frac{1}{d(w)}}}\leq \frac{1}{\sqrt{1+\frac{1}{d(G)}}}.
$$
According to the  Theorem 1.1 it shows that for any graph $G$
spaces $PW_{\omega}(G)$ with
\begin{equation}
0<\omega<\sqrt{1+\frac{1}{d(G)}}=\Omega_{G}>1
\end{equation}
have non-trivial uniqueness sets.

 We have to emphasize
that our results are not trivial only for graphs for which
interval
 $(0,\Omega_{G}]$ has non-empty intersection with the spectrum $\sigma(\mathcal{L})$.
Bellow are some  examples for which this condition is satisfied.

\begin{enumerate}

\item $n$-dimensional lattices $\mathbb{Z}^{n}$ for which the
spectrum $\sigma(\mathcal{L})$ is the entire interval $[0,2]$ and
$ \Omega_{G}=\sqrt{1+2^{-n}}>1.$

\item Infinite countable  graphs with bounded vertex degrees which
have polynomial growth. For such graphs $\omega_{\min}=\inf
\sigma(\mathcal{L})$ is always zero \cite{M}.

\item  Homogeneous trees of order $q+1$ for which
$$
\sigma(\mathcal{L})=\left[1-\eta(q),1+\eta(q)\right],
\Omega_{G}=\sqrt{1+(q+1)^{-1}}>1, \eta(q)=\frac{2\sqrt{q}}{q+1}.
$$

\item Finite and not complete graphs $G$ for all of which  the
first nonzero eigenvalue $\lambda_{G}$ satisfies $\lambda_{G}\leq
1<\Omega_{G}$. For example for any planar graph with $n$ vertices
the following estimate is known \cite{M}
$$
\lambda_{G}\leq \frac{12d_{\max}}{\sqrt{n/2}-6},
$$
which shows that for a fixed $d_{\max}$ and large $n$ the
eigenvalue $\lambda_{G}$ is close to zero.

\end{enumerate}

Given a proper subset of vertices $S\subset V(G)$ its
\textit{vertex boundary} $bS$ is the set of all vertices in $V(G)$
which are not in $S$ but adjacent to a vertex in S
$$
b S=\left\{v\in V(G)\backslash S:\exists \{u,v\}\in E(G), u\in
S\right\}.
$$

If a graph $G=(V(G),E(G))$ is connected and $S$ is a proper subset
of $V$ then the vertex boundary $b S$ is not empty. For a finite
set $S$ consider the set $S\cup bS=\overline{S}$ as an induced
graph. It means that the  graph $\overline{S}$  is determined by
all edges of $G$ with both endpoints in $\overline{S}=S\cup bS$.

\begin{defn}
If $S$ is a finite proper subset of vertices then the notation
$\Gamma(S)$ will be used for a graph constructed in the following
way. Take two copies of the induced graph $\overline{S}=S\cup bS$,
which we will denote as $\overline{S}_{1}$ and $\overline{S}_{2}$
and identify  every vertex $v\in b S\subset \overline{S}_{1}$ with
"the same" vertex $v\in b S\subset \overline{S}_{2}$.
\end{defn}

The following statement summarizes some of our main results
(Theorem 3.7 and Theorem 4.1).
\begin{thm}
For a given $\omega_{\min}<\omega< \Omega_{G}$ consider a set of
vertices  $S=\bigcup S_{j}$ with the following properties:
\begin{enumerate}
\item for every $S_{j}\subset V(G)$ the following inequality takes
place
\begin{equation}
\lambda_{1}(\Gamma(S_{j}))>\omega,
\end{equation}
where $\lambda_{1}(\Gamma(S_{j}))$ is the first positive
eigenvalue of the graph $\Gamma(S_{j})$;

\item the sets $\overline{S}_{j}=S_{j}\cup bS_{j}$ are disjoint.

 Then the following holds true:

 \item
the set $U=V(G)\setminus S$ is a uniqueness set for the space
$PW_{\omega}(G)$;

\item  there exists a frame of functions $\{\Theta_{u}\}_{u\in U}$
in the space $PW_{\omega}(G)$ such that the following
reconstruction formula holds true for all $f\in PW_{\omega}(G)$
\begin{equation}
f(v)=\sum_{u\in U}f(u)\Theta_{u}(v), v\in V(G).
\end{equation}
\end{enumerate}
\end{thm}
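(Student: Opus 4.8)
The plan is to deduce both assertions from the $\Lambda$-set criterion of Theorem 1.1, so the central task is to convert the spectral hypothesis $\lambda_{1}(\Gamma(S_{j}))>\omega$ into a Poincar\'e inequality of type (1.7) for $S=\bigcup_{j}S_{j}$. The bridge is the Dirichlet-form identity
$$
\langle \mathcal{L}\varphi,\varphi\rangle=\sum_{\{u,v\}\in E(G)}\left|\frac{\varphi(u)}{\sqrt{d(u)}}-\frac{\varphi(v)}{\sqrt{d(v)}}\right|^{2},
$$
obtained by pairing (1.5) with $\overline{\varphi}$ and summing over $v$. First I would prove the \emph{key lemma}: for every $\varphi\in L_{2}(S_{j})$,
$$
\lambda_{1}(\Gamma(S_{j}))\,\|\varphi\|^{2}\le \langle\mathcal{L}\varphi,\varphi\rangle,
$$
whence Cauchy--Schwarz, $\langle\mathcal{L}\varphi,\varphi\rangle\le\|\mathcal{L}\varphi\|\,\|\varphi\|$, yields $\|\varphi\|\le \lambda_{1}(\Gamma(S_{j}))^{-1}\|\mathcal{L}\varphi\|$, i.e. $\Lambda(S_{j})\le \lambda_{1}(\Gamma(S_{j}))^{-1}$.

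To prove the lemma I would use the doubling construction defining $\Gamma(S_{j})$. Given $\varphi\in L_{2}(S_{j})$ (so $\varphi\equiv 0$ on $bS_{j}$), define its antisymmetric extension $\Phi$ on $\Gamma(S_{j})$ by placing $\varphi$ on the interior of one copy, $-\varphi$ on the interior of the other, and $0$ on the glued boundary. Two facts must be checked by direct computation with (1.5). Every interior vertex $v\in S_{j}$ retains its original degree $d(v)$ in $\Gamma(S_{j})$, since all of its $G$-neighbours already lie in $\overline{S}_{j}$; consequently $\mathcal{L}_{\Gamma}\Phi$ (the Laplacian of $\Gamma(S_{j})$, defined by the same formula) coincides with $\mathcal{L}\varphi$ at every vertex of the first interior copy and with $-\mathcal{L}\varphi$ on the second, while on the glued boundary the two copies cancel so that $\mathcal{L}_{\Gamma}\Phi\equiv 0$ there. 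Because $\Phi$ vanishes on the boundary, only interior vertices contribute to the inner products, giving $\langle\mathcal{L}_{\Gamma}\Phi,\Phi\rangle=2\langle\mathcal{L}\varphi,\varphi\rangle$ and $\|\Phi\|^{2}=2\|\varphi\|^{2}$. Since $\Phi$ is antisymmetric it is orthogonal to the constants, i.e. to the kernel of $\mathcal{L}_{\Gamma}$, so the variational characterisation of the first positive eigenvalue gives $\langle\mathcal{L}_{\Gamma}\Phi,\Phi\rangle\ge\lambda_{1}(\Gamma(S_{j}))\|\Phi\|^{2}$, and dividing by $2$ proves the lemma.

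Next I would assemble the pieces over the union. Any $\varphi\in L_{2}(S)$ splits as $\varphi=\sum_{j}\varphi_{j}$ with $\varphi_{j}\in L_{2}(S_{j})$; because the sets $\overline{S}_{j}=S_{j}\cup bS_{j}$ are disjoint and each $\mathcal{L}\varphi_{j}$ is supported in $\overline{S}_{j}$, both the families $\{\varphi_{j}\}$ and $\{\mathcal{L}\varphi_{j}\}$ have pairwise disjoint supports. Hence $\|\varphi\|^{2}=\sum_{j}\|\varphi_{j}\|^{2}$, $\|\mathcal{L}\varphi\|^{2}=\sum_{j}\|\mathcal{L}\varphi_{j}\|^{2}$, and the lemma gives
$$
\|\varphi\|^{2}=\sum_{j}\|\varphi_{j}\|^{2}\le\sum_{j}\lambda_{1}(\Gamma(S_{j}))^{-2}\|\mathcal{L}\varphi_{j}\|^{2}\le\Lambda^{2}\|\mathcal{L}\varphi\|^{2},\qquad \Lambda=\sup_{j}\lambda_{1}(\Gamma(S_{j}))^{-1}.
$$
Thus $S$ is a $\Lambda$-set, and $\lambda_{1}(\Gamma(S_{j}))>\omega$ forces $\Lambda\le 1/\omega$; for a finite family this is the strict inequality $\omega<1/\Lambda$ required to apply Theorem 1.1, which at once yields that $U=V(G)\setminus S$ is a uniqueness set for $PW_{\omega}(G)$, proving~(3).

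Finally, for the reconstruction (1.11) I would upgrade uniqueness to a stable sampling estimate and invoke frame theory. Write $P=E_{[0,\omega]}(\mathcal{L})$ for the projector onto $PW_{\omega}(G)$ and $P_{S}$ for coordinate restriction onto $L_{2}(S)$; since these are adjoint orthogonal projections, $\|P_{S}|_{PW_{\omega}}\|=\|P|_{L_{2}(S)}\|=:\alpha$. For $\psi\in L_{2}(S)$ the Poincar\'e bound $\|\mathcal{L}\psi\|\ge\|\psi\|/\Lambda$ combined with the spectral split $\|\mathcal{L}\psi\|^{2}\le\omega^{2}\|P\psi\|^{2}+\omega_{\max}^{2}\|(I-P)\psi\|^{2}$ forces
$$
\|P\psi\|^{2}\le\frac{\omega_{\max}^{2}-\Lambda^{-2}}{\omega_{\max}^{2}-\omega^{2}}\,\|\psi\|^{2},
$$
whose coefficient is strictly below $1$ precisely because $\omega<1/\Lambda$; hence $\alpha<1$. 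As $\|f\|^{2}=\sum_{u\in U}|f(u)|^{2}+\|P_{S}f\|^{2}$, this delivers the two-sided Plancherel--Polya inequality $(1-\alpha^{2})\|f\|^{2}\le\sum_{u\in U}|f(u)|^{2}\le\|f\|^{2}$ on $PW_{\omega}(G)$. Therefore the sampling operator $Tf=(f(u))_{u\in U}$ from $PW_{\omega}(G)$ to $\ell_{2}(U)$ is bounded and bounded below, so $\{P\delta_{u}\}_{u\in U}$ is a frame; setting $\Theta_{u}=(T^{*}T)^{-1}P\delta_{u}\in PW_{\omega}(G)$ and using $\langle f,P\delta_{u}\rangle=f(u)$ produces exactly (1.11), the sum converging in $L_{2}(G)$ and hence pointwise since $|g(v)|\le\|g\|$. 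The step I expect to be the main obstacle is the doubling lemma: one must verify the Laplacian bookkeeping on the glued boundary carefully (the degree mismatch there is harmless only because $\Phi$ vanishes), and in the infinite case one must ensure the supremum defining $\Lambda$ stays strictly below $1/\omega$ --- automatic when, as for $\mathbb{Z}^{n}$ and homogeneous trees, the blocks $S_{j}$ are congruent.
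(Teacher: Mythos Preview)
Your route to conclusion (3) coincides with the paper's: Theorem 3.3 there proves the Poincar\'e bound $\|\varphi\|\le\lambda_{1}(\Gamma(S_{j}))^{-1}\|\mathcal{L}\varphi\|$ via exactly the same antisymmetric doubling $F_{\varphi}$, Lemma 3.7 assembles the blocks into a $\Lambda$-set using the disjointness of the $\overline{S}_{j}$, and Theorem 3.2 ($=$ Theorem 1.1) closes the uniqueness argument. The only cosmetic difference is that the paper compares $\|\mathcal{L}_{\Gamma}F_{\varphi}\|$ and $\|F_{\varphi}\|$ directly through the eigenfunction expansion, whereas you pass through the Rayleigh quotient $\langle\mathcal{L}_{\Gamma}\Phi,\Phi\rangle\ge\lambda_{1}\|\Phi\|^{2}$ and Cauchy--Schwarz; both yield the same constant. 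One small correction: the kernel of $\mathcal{L}_{\Gamma}$ is spanned by $\psi_{0}(v)=\sqrt{d_{\Gamma}(v)}$, not by the constant function, but your antisymmetry still gives $\langle\Phi,\psi_{0}\rangle=0$ because the two interior copies carry equal degrees.

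Your argument for (4), however, is genuinely different from the paper's and in one respect stronger. The paper derives the Plancherel--Polya inequality (3.1) from Theorem 3.1 by a closed-graph/equivalence-of-norms argument that is stated only for \emph{finite-dimensional} $PW_{\omega}(G)$, and then feeds it into the Duffin--Schaeffer frame-operator construction (Theorem 4.1). You instead exploit the Poincar\'e inequality quantitatively: from $\|\mathcal{L}\psi\|\ge\Lambda^{-1}\|\psi\|$ on $L_{2}(S)$ together with the spectral split you extract $\|P|_{L_{2}(S)}\|=\|P_{S}|_{PW_{\omega}}\|=\alpha<1$, which immediately gives the two-sided sampling bound with the explicit lower constant $1-\alpha^{2}=(\Lambda^{-2}-\omega^{2})/(\omega_{\max}^{2}-\omega^{2})$. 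This dispenses with any dimension hypothesis and supplies a concrete frame bound the paper does not provide. Your closing caveat about the strict inequality $\omega<1/\Lambda$ possibly degenerating to equality for infinite non-congruent families is likewise a subtlety the paper leaves implicit.
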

Note that the last formula is an analog of the formula (1.3).

Let us illustrate some of our   main results in the case of a line
graph $\mathbb{Z}$. In this case the spectrum of the Laplace
operator is the interval $[0, 2]$ and  the constant
$\Omega_{\mathbb{Z}}$ which is defined in (1.9) equals
$\sqrt{3/2}$.
\begin{thm}
For any finite subset $S$ of successive vertices of the graph
$\mathbb{Z}$ the following inequality holds true
\begin{equation}
\|\varphi\|\leq
\frac{1}{2}\sin^{-2}\frac{\pi}{2|S|+2}\|\mathcal{L}\varphi\|,
\varphi\in L_{2}(S).
\end{equation}
It implies that for a given $0<\omega<\sqrt{3/2}$ every function
$f\in PW_{\omega}(G)$ is uniquely determined by its values on a
set $U=V(G)\backslash S$, where $S$ is a finite or infinite union
of
 disjoint sets $\{S_{j}\}$ of successive vertices such
that
\begin{enumerate}

\item the sets $\overline{S}_{j}=S_{j}\cup b S_{j}$ are disjoint

and

\item
$$
|S_{j}|< \frac{\pi}{2\arcsin\sqrt{\frac{ \omega}{2}}}-1, j\in
\mathbb{N}.
$$
\end{enumerate}

 Moreover, there exists a frame of functions
$\{\Theta_{u}\}_{u\in U}$ in the space $PW_{\omega}(G)$ such that
the following reconstruction formula holds true for all $f\in
PW_{\omega}(G)$
\begin{equation}
f(v)=\sum_{u\in U}f(u)\Theta_{u}(v), v\in V(G).
\end{equation}

\end{thm}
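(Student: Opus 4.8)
The plan is to establish the Poincaré inequality (1.12) first and then derive the uniqueness and reconstruction statements by feeding it into Theorem 1.1 and Theorem 1.3. By the translation invariance of $\mathbb{Z}$ and of $\mathcal{L}$ I may assume the successive vertices are $S=\{1,2,\dots,N\}$ with $N=|S|$. Every vertex of $\mathbb{Z}$ has degree two, so (1.5) collapses to $\mathcal{L}\varphi(v)=\varphi(v)-\tfrac12\bigl(\varphi(v-1)+\varphi(v+1)\bigr)$. For $\varphi\in L_2(S)$ the image $\mathcal{L}\varphi$ lives on $\{0,1,\dots,N+1\}$, and the crucial remark is that its two boundary values, at $0$ and at $N+1$, enter $\|\mathcal{L}\varphi\|^2$ additively and nonnegatively; discarding them can only lower the norm. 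Writing $P_S$ for the orthogonal projection of $L_2(\mathbb{Z})$ onto $L_2(S)$ and using $\varphi=P_S\varphi$, this reads $\|\mathcal{L}\varphi\|^2\ge\|P_S\mathcal{L}\varphi\|^2=\|P_S\mathcal{L}P_S\,\varphi\|^2$.

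The second step is to diagonalize the compression $P_S\mathcal{L}P_S$. In standard position it is the $N\times N$ tridiagonal matrix with entries $1$ on the diagonal and $-\tfrac12$ on the two neighbouring diagonals, that is $I-\tfrac12 C$ where $C$ is the adjacency matrix of the path on $N$ vertices. The eigenvalues of $C$ are $2\cos\frac{k\pi}{N+1}$, $k=1,\dots,N$, so those of $P_S\mathcal{L}P_S$ are $1-\cos\frac{k\pi}{N+1}=2\sin^2\frac{k\pi}{2(N+1)}$, the least being $\mu_1=2\sin^2\frac{\pi}{2(N+1)}>0$. Since $P_S\mathcal{L}P_S$ is positive definite, $\|P_S\mathcal{L}P_S\,\varphi\|\ge\mu_1\|\varphi\|$; combined with the previous paragraph this gives $\|\mathcal{L}\varphi\|\ge 2\sin^2\frac{\pi}{2|S|+2}\,\|\varphi\|$, which is exactly (1.12).

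To reach the uniqueness claim I would check that hypothesis (2) is nothing but $\Lambda(S_j)<1/\omega$: using the monotonicity of $\sin$ on $[0,\pi/2]$, the bound $|S_j|<\frac{\pi}{2\arcsin\sqrt{\omega/2}}-1$ rearranges to $2\sin^2\frac{\pi}{2|S_j|+2}>\omega$, i.e. $\tfrac12\sin^{-2}\frac{\pi}{2|S_j|+2}<1/\omega$, so by (1.12) each $S_j$ is a $\Lambda$-set with constant $<1/\omega$. Hypothesis (1), disjointness of the closures $\overline{S}_j=S_j\cup bS_j$, is what lets the single inequality for $S=\bigcup_j S_j$ follow from the pieces: decompose $\varphi\in L_2(S)$ as $\varphi=\sum_j\varphi_j$ with $\varphi_j\in L_2(S_j)$; each $\mathcal{L}\varphi_j$ is supported in $\overline{S}_j$, the supports are disjoint, and hence $\|\mathcal{L}\varphi\|^2=\sum_j\|\mathcal{L}\varphi_j\|^2\ge\sum_j\Lambda(S_j)^{-2}\|\varphi_j\|^2\ge\Lambda^{-2}\|\varphi\|^2$ with $\Lambda=\sup_j\Lambda(S_j)$. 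Because (2) bounds $|S_j|$ uniformly, only finitely many values $\Lambda(S_j)$ occur and $\Lambda<1/\omega$ even for an infinite union; thus $S$ is a $\Lambda$-set with $\Lambda<1/\omega$, and Theorem 1.1 yields that $U=V(G)\setminus S$ is a uniqueness set for $PW_\omega(G)$ with $0<\omega<\sqrt{3/2}=\Omega_{\mathbb{Z}}$.

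Finally, the frame $\{\Theta_u\}_{u\in U}$ and the reconstruction formula (1.13) come from Theorem 1.3 once its two hypotheses are verified for this $S$. Hypothesis (2) there is again the disjointness of the $\overline{S}_j$; hypothesis (1.10) is the eigenvalue form of the bound just proved, for one checks directly that the doubled graph $\Gamma(S_j)$ of a successive block $S_j=\{1,\dots,N\}$ is the cycle $C_{2N+2}$, whose first positive eigenvalue is $1-\cos\frac{2\pi}{2N+2}=2\sin^2\frac{\pi}{2|S_j|+2}$; this coincides with the smallest eigenvalue $\mu_1$ of the compression, so $\lambda_1(\Gamma(S_j))>\omega$ is precisely condition (2). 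The one step I expect to require care is the very first: since $\|\mathcal{L}\varphi\|$ genuinely tests $\mathcal{L}\varphi$ at the two vertices outside $S$, one cannot replace $\mathcal{L}$ by its compression for free, and it is the sign of the discarded boundary terms that makes the passage to $P_S\mathcal{L}P_S$ a legitimate lower bound while reproducing the exact constant appearing in (1.12).
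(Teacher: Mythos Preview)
Your argument is correct, and the overall architecture (Poincar\'e inequality for a single block $\Rightarrow$ Lemma on disjoint unions $\Rightarrow$ Theorem~1.1 for uniqueness $\Rightarrow$ frame reconstruction) matches the paper exactly. The one place where you diverge is in how you obtain the constant in the Poincar\'e inequality. The paper realises the general $\Gamma(S)$ construction explicitly: it takes the odd extension $F_\varphi$ of $\varphi$ to the cycle $C_{2N+2}$, observes that $F_\varphi$ is orthogonal to constants, and bounds $\|\mathcal{L}_{C}F_\varphi\|$ below by $\lambda_1(C_{2N+2})\|F_\varphi\|$ with $\lambda_1(C_{2N+2})=2\sin^2\frac{\pi}{2N+2}$. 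You instead drop the two boundary values of $\mathcal{L}\varphi$ and diagonalise the compression $P_S\mathcal{L}P_S$, which is the Dirichlet-type tridiagonal matrix on the path. The two routes coincide because the odd eigenfunctions of $C_{2N+2}$ are exactly the Dirichlet eigenfunctions of the path $P_N$, so the smallest positive cycle eigenvalue equals your $\mu_1$; and the paper's odd extension also annihilates $\mathcal{L}_C F_\varphi$ at the two boundary vertices, so it too is implicitly discarding those terms (indeed the paper's displayed equality $2\|\mathcal{L}_{\mathbb{Z}}\varphi\|=\|\mathcal{L}_C F_\varphi\|$ should really be an inequality, in line with~(3.9)). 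Your version is a shade more direct and makes the role of the boundary terms explicit; the paper's version has the virtue of being a specialisation of the general $\Gamma(S)$ machinery of Theorem~3.3, which is what you then invoke anyway when you identify $\Gamma(S_j)=C_{2|S_j|+2}$ at the end. One small slip: in the last paragraph you cite ``Theorem~1.3'' for the frame conclusion, but that is the statement you are proving; you mean Theorem~1.2 (equivalently Theorems~3.7 and~4.1).
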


When $\omega$ is really small we obtain the following "Nyquist
rate" of sampling
\begin{equation}
|S_{j}|\sim \frac{\pi}{\sqrt{2\omega}}-1\sim
\frac{\pi}{\sqrt{2\omega}} .
\end{equation}

Using this Theorem it is easy to estimate that if, for example,
$\omega=0.5$  then there are  uniqueness sets for
$PW_{0.5}(\mathbb{Z})$ that contain about 50 percent of points of
any interval of length $4k$ in $\mathbb{Z}$; if $\omega=0.1$, then
there are uniqueness sets for $PW_{0.1}(\mathbb{Z})$ that contain
about 25 percent of points of any interval of length $8k$ in
$\mathbb{Z}$; if $\omega=0.01$,  then there are  uniqueness sets
for $PW_{0.01}(\mathbb{Z})$ that contain about 9 percent of points
of any interval in $\mathbb{Z}$.

\begin{rem}
If one will compare our relations (1.13) between $S_{j}$ and
$\omega$ with the corresponding relations between a rate of
sampling and frequency $\omega$ for classical Paley-Wiener spaces
he can be confused with the presence of a square root for
$\omega$. It is because there is a certain discrepancy between our
and classical definitions of Paley-Wiener spaces (see Remark 1).
\end{rem}

 \begin{rem}
Our inequalities (1.11) are similar to the inequalities which were
obtained in \cite{FTT}. Note that the proofs in \cite{FTT} relay
on the knowledge  of eigenvalues of certain Hermitian matrices
which  were calculated  by a physicist D. E. Rutherford in
\cite{R} in connection with some problems in physics and
chemistry. What is really interesting that D. E. Rutherford
considered  graphs as models of some physical systems.
\end{rem}
 A similar result holds true for a lattice
$\mathbb{Z}^{n}$ of any dimension. The spectrum of the Laplace
operator is $[0, 2]$ and
$\Omega_{\mathbb{Z}^{n}}=\sqrt{(2n+1)/2n}$. We have the following
sampling Theorem.

\begin{thm}For a given $0<\omega<\sqrt{(2n+1)/2n}$ every function $f\in
PW_{\omega}(G)$ is uniquely determined by its values on a set
$U=V(G)\backslash S$, where $S$ is a finite or infinite union of
 disjoint $N_{1,j}\times N_{2,j}\times...\times N_{n,j}$ "rectangular solids"
   $\{S_{j}\}$ of vertices such
that

\begin{enumerate}

\item the sets $\overline{S}_{j}=S_{j}\cup b S_{j}$ are disjoint

and

\item the following inequality holds true for all $j$
$$
\omega< 4\min\left(\sin \frac{\pi}{2N_{1,j}+2},\sin
\frac{\pi}{2N_{2,j}+2},...,\sin
\frac{\pi}{2N_{n,j}+2}\right)=C_{j}.
$$

\end{enumerate}

 Moreover, there exists a frame of functions
$\{\Theta_{u}\}_{u\in U}$ in the space $PW_{\omega}(G)$ such that
the following reconstruction formula holds true for all $f\in
PW_{\omega}(G)$
$$
f(v)=\sum_{u\in U}f(u)\Theta_{u}(v), v\in V(G).
$$

\end{thm}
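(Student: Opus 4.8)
The plan is to reduce the whole statement to the one-dimensional estimate of Theorem 1.3 by exploiting the product structure of both the lattice $\mathbb{Z}^{n}$ and the solids $S_{j}$, and then to feed the resulting per-solid eigenvalue bound into the general reconstruction machinery summarized in Theorem 1.2 (that is, Theorem 4.1). First I would record that on $\mathbb{Z}^{n}$ every vertex has degree $2n$, so the normalized Laplacian factors as
\[
\mathcal{L}=\frac{1}{2n}\sum_{k=1}^{n}\Delta_{k},\qquad \Delta_{k}f(v)=2f(v)-f(v-e_{k})-f(v+e_{k}),
\]
where $e_{k}$ is the $k$-th coordinate vector and $\Delta_{k}$ is the one-dimensional difference Laplacian in the $k$-th direction. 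The operators $\Delta_{1},\dots,\Delta_{n}$ commute, are positive, and are simultaneously diagonalized by the Fourier transform on $\mathbb{Z}^{n}$ with nonnegative symbols.

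The core step is a Poincare inequality on a single solid $S_{j}=I_{1}\times\cdots\times I_{n}$, with $I_{k}$ a block of $N_{k,j}$ successive integers, for $\varphi\in L_{2}(S_{j})$. The key point is to control the norm $\|\mathcal{L}\varphi\|$ that enters the definition of a $\Lambda$-set, not merely the quadratic form $\langle\mathcal{L}\varphi,\varphi\rangle$. Writing $\sum_{k}\Delta_{k}=\Delta_{k_{0}}+\sum_{k\ne k_{0}}\Delta_{k}$ as a sum of commuting positive operators, the cross term is nonnegative, so for \emph{every} fixed direction $k_{0}$,
\[
\|2n\,\mathcal{L}\varphi\|=\Big\|\sum_{k}\Delta_{k}\varphi\Big\|\ge\|\Delta_{k_{0}}\varphi\|.
\]
For that single direction I would slice $\varphi$ along the $k_{0}$-th coordinate: on every line $\varphi$ is supported on an interval of $N_{k_{0},j}$ successive vertices and $\Delta_{k_{0}}$ acts exactly as the one-dimensional Laplacian, so the one-dimensional estimate behind Theorem 1.3 (the smallest Dirichlet eigenvalue on such an interval is governed by $\sin\frac{\pi}{2N_{k_{0},j}+2}$) applies line by line; summing over the transverse coordinates by Parseval gives a lower bound for $\|\Delta_{k_{0}}\varphi\|$. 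Since $k_{0}$ is arbitrary, choosing the least favorable direction produces the minimum over $k$ and hence a per-solid inequality $\|\varphi\|\le\Lambda_{j}\|\mathcal{L}\varphi\|$ with $1/\Lambda_{j}$ of the stated form $C_{j}$; equivalently, this is the assertion $\lambda_{1}(\Gamma(S_{j}))>\omega$ whenever $\omega<C_{j}$.

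With the local estimate in hand the assembly is identical to the proof of the line-graph theorem (Theorem 1.3). The disjointness of the closures $\overline{S}_{j}=S_{j}\cup bS_{j}$ decouples the local inequalities into a single global Poincare inequality on $L_{2}(S)$, $S=\bigcup_{j}S_{j}$, with constant $\Lambda=\sup_{j}\Lambda_{j}<1/\omega$. Thus $S$ is a $\Lambda$-set and Theorem 1.1 yields that $U=V(G)\setminus S$ is a uniqueness set for $PW_{\omega}(\mathbb{Z}^{n})$. The dual frame $\{\Theta_{u}\}_{u\in U}\subset PW_{\omega}$ and the reconstruction formula $f(v)=\sum_{u\in U}f(u)\Theta_{u}(v)$ then follow verbatim from the frame construction behind Theorem 1.2 (Theorem 4.1), which uses only the validity of the global Poincare inequality together with the disjointness of the $\overline{S}_{j}$.

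The main obstacle is precisely the passage from the separable scalar quantity $\langle\mathcal{L}\varphi,\varphi\rangle$ to the norm $\|\mathcal{L}\varphi\|=\frac{1}{2n}\|\sum_{k}\Delta_{k}\varphi\|$, and getting the \emph{minimum} over directions to appear with the stated constant. Two things must be checked carefully: the nonnegativity of the cross terms $\langle\Delta_{k}\varphi,\Delta_{l}\varphi\rangle$ (so that distinct directions cannot cancel), which rests on the simultaneous diagonalization with nonnegative symbols; and the bookkeeping of the boundary, since $\Delta_{k}\varphi$ is supported not on $S_{j}$ but on $\overline{S}_{j}$, so that it is the one-dimensional Dirichlet spectrum on an interval of length $N_{k,j}$ that governs the bound. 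Inserting the one-dimensional eigenvalue computation underlying Theorem 1.3 at exactly this point, in the single dominating direction, is the delicate part of the argument.
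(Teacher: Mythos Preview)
Your route is genuinely different from the paper's. The paper does not decompose $\mathcal{L}$ into commuting directional pieces; instead it generalizes the one–dimensional cycle argument directly: a rectangular solid $S_{j}$ of size $N_{1,j}\times\cdots\times N_{n,j}$ is embedded (via an odd/zero extension) into the discrete $n$–torus $C_{2N_{1,j}+2}\times\cdots\times C_{2N_{n,j}+2}$, whose Laplacian has separable eigenfunctions $\prod_{k}\psi_{n_{k}}$ and eigenvalues that are sums of the one–dimensional cycle eigenvalues. The extended function is orthogonal to the zero mode, so the first admissible torus eigenvalue gives the Poincare constant, and the assembly (Lemma 3.7, Theorem 3.2, Theorem 4.1) is then identical to what you describe. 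Your operator-theoretic reduction via $\|\sum_{k}\Delta_{k}\varphi\|\ge\|\Delta_{k_{0}}\varphi\|$ is a clean alternative that avoids building the torus and works slice by slice, but it trades sharpness for simplicity.

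There is, however, a real gap in your proposal at the level of the constant. Tracking your own steps: on $\mathbb{Z}$ one has $\mathcal{L}_{\mathbb{Z}}=\tfrac{1}{2}\Delta$, so the one–dimensional inequality of Lemma 5.3 reads $\|\Delta\psi\|\ge 4\sin^{2}\!\tfrac{\pi}{2N+2}\,\|\psi\|$ for $\psi$ supported on $N$ consecutive points. Feeding this into your chain gives
\[
\|\mathcal{L}\varphi\|=\tfrac{1}{2n}\Big\|\sum_{k}\Delta_{k}\varphi\Big\|\ \ge\ \tfrac{1}{2n}\|\Delta_{k_{0}}\varphi\|\ \ge\ \tfrac{2}{n}\,\sin^{2}\!\frac{\pi}{2N_{k_{0},j}+2}\,\|\varphi\|,
\]
so the best threshold your argument yields is $1/\Lambda_{j}\ge \tfrac{2}{n}\max_{k}\sin^{2}\!\tfrac{\pi}{2N_{k,j}+2}$, not the stated $C_{j}=4\min_{k}\sin\!\tfrac{\pi}{2N_{k,j}+2}$. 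The two quantities are of different orders (quadratic in the sine versus linear, and with an $n$–dependent prefactor), and for large solids yours is strictly smaller; in particular, the hypothesis $\omega<C_{j}$ of the theorem does not imply $\omega<1/\Lambda_{j}$ from your bound. So your plan proves a correct sampling theorem on $\mathbb{Z}^{n}$, but with a more restrictive condition on $\omega$ than the one in the statement. Also, the phrase ``choosing the least favorable direction produces the minimum over $k$'' is backwards: since each $k_{0}$ gives a lower bound, one should take the \emph{best} direction, which yields the maximum, not the minimum. To match the paper's form of $C_{j}$ you would need the torus embedding (or an equivalent product-eigenvalue computation) rather than the single-direction lower bound.
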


We also consider Paley-Wiener spaces on homogeneous trees and in
Chapter 6 give some applications to eigenvalue and eigenfunction
approximations on finite graphs. Bellow we  formulate some
consequences of our results about finite graphs.

 Thus we assume that a graph $G$ has $N$ vertices and eigenvalues of
 the Laplace operator $\mathcal{L}$ are $0=\lambda_{0}<\lambda_{1}\leq \lambda_{2}
 \leq...\leq \lambda_{N-1}$. Let
$\mathcal{N}[0,\omega)$ denote the number of eigenvalues of
$\mathcal{L}$ in $[0,\omega)$ and  $\mathcal{N}[\omega,
\omega_{\max}]$ is a number of eigenvalues of $\mathcal{L}$ in
$[\omega, \omega_{\max}]$. The notation $\mathcal{P}(\Lambda)$ is
used for all sets of vertices $S\subset V(G)$ which satisfy (1.7).
The next Corollary gives a certain information about distribution
of eigenvalues of $\mathcal{L}$.

\begin{col}
For any set $S$ which satisfies (1.7) the following inequalities
hold true
$$
\mathcal{N}[0,1/\Lambda)\leq|V(G)|-|S|,
$$
and
$$
\mathcal{N}[1/\Lambda, \omega_{\max}]\geq|S|.
$$
In particular if $M=\max_{S\in \mathcal{P}(\Lambda)}|S|$, then
$$
\mathcal{N}[0,1/\Lambda)\leq|V(G)|-M.
$$
\end{col}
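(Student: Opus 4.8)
The plan is to reduce both inequalities to a single dimension count for the finite-dimensional subspace $PW_{\omega}(G)$ and then feed it into Theorem 1.1. Since $G$ is finite, $\mathcal{L}$ has an orthonormal eigenbasis $\{\varphi_{k}\}$ with eigenvalues $\lambda_{k}$, and by the definition of the transform $F$ the space $PW_{\omega}(G)$ is precisely the span of those $\varphi_{k}$ with $\lambda_{k}\le\omega$. Hence $\dim PW_{\omega}(G)$ equals the number of eigenvalues not exceeding $\omega$. This identification is the bridge between the analytic statement (uniqueness sets) and the arithmetic statement (counting eigenvalues).

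First I would fix an admissible frequency just below $1/\Lambda$. Because the spectrum is finite, there is a largest eigenvalue, say $\lambda_{m-1}$, lying strictly below $1/\Lambda$, and any $\omega$ with $\lambda_{m-1}\le\omega<1/\Lambda$ both satisfies the hypothesis $0<\omega<1/\Lambda$ of Theorem 1.1 and satisfies $\dim PW_{\omega}(G)=\mathcal{N}[0,1/\Lambda)$, since no eigenvalue falls in the interval $(\lambda_{m-1},1/\Lambda)$. For such an $\omega$, Theorem 1.1 tells us that $U=V(G)\setminus S$ is a uniqueness set for $PW_{\omega}(G)$; by definition this means that the only function in $PW_{\omega}(G)$ vanishing on $U$ is the zero function, i.e. the restriction map $f\mapsto f|_{U}$ is injective on $PW_{\omega}(G)$. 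An injective linear map into the space $L_{2}(U)$, whose dimension is $|U|=|V(G)|-|S|$, forces $\dim PW_{\omega}(G)\le|V(G)|-|S|$. This is exactly the first inequality $\mathcal{N}[0,1/\Lambda)\le|V(G)|-|S|$.

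The second inequality is then immediate by complementarity. All $N=|V(G)|$ eigenvalues of $\mathcal{L}$ lie in $[0,\omega_{\max}]$, and the intervals $[0,1/\Lambda)$ and $[1/\Lambda,\omega_{\max}]$ partition this range, so $\mathcal{N}[0,1/\Lambda)+\mathcal{N}[1/\Lambda,\omega_{\max}]=|V(G)|$. Combining this with the first inequality gives $\mathcal{N}[1/\Lambda,\omega_{\max}]=|V(G)|-\mathcal{N}[0,1/\Lambda)\ge|S|$. For the final assertion I would simply apply the first inequality to a set $S\in\mathcal{P}(\Lambda)$ realizing the maximum cardinality $M$, obtaining $\mathcal{N}[0,1/\Lambda)\le|V(G)|-M$.

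The one point I would treat most carefully, and the only genuine subtlety, is the passage from the strict bound $\omega<1/\Lambda$ required by Theorem 1.1 to the half-open counting function $\mathcal{N}[0,1/\Lambda)$. This step relies essentially on the spectrum being discrete: there is a true gap between $1/\Lambda$ and the eigenvalues below it, so an admissible $\omega$ can be chosen to detect exactly the eigenvalues strictly less than $1/\Lambda$ and no more. Without this finiteness one could not guarantee that $\dim PW_{\omega}(G)$ reaches $\mathcal{N}[0,1/\Lambda)$ for some admissible $\omega$. Everything beyond this is elementary linear algebra together with the eigenfunction description of $PW_{\omega}(G)$.
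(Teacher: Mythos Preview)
Your proof is correct and follows essentially the same route as the paper: the paper derives Corollary 1.1 from Theorem 6.2, whose proof is precisely the observation that if $S$ is a $\Lambda$-set then $U=V(G)\setminus S$ is a uniqueness set for every $PW_{\omega}(G)$ with $\omega<1/\Lambda$, so the dimension of $PW_{\omega}(G)$ (the number of eigenvalues in $[0,\omega]$) cannot exceed $|U|$. Your treatment of the spectral gap needed to pass from the strict bound $\omega<1/\Lambda$ to the half-open count $\mathcal{N}[0,1/\Lambda)$ is more explicit than the paper's, but the underlying argument is the same.
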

To illustrate this result let us consider  a cycle graph
$C_{100}=\{1, 2, ... , 100\}$ on $100$ vertices and suppose we are
going to determine all eigenvalues which are not greater than
$\omega=0.002$.  Note that the space $PW_{0.002}(C_{100})$ is the
span of all eigenfunctions whose eigenvalues are not greater than
$0.002$. According to the Theorem 1.3 a uniqueness set for the
space $PW_{0.002}(C_{100})$ can be constructed as a compliment of
a set $S=\bigcup_{j} S_{j}$ such that $\overline{S}_{j}=S_{j}\cup
bS_{j}$ are disjoint and

$$
|S_{j}|< \frac{\pi}{2\arcsin\sqrt{\frac{ 0.002}{2}}}-1>49-1=48.
$$

Thus we can take $|S_{j}|=48$ and it means that one of possible
uniqueness sets $U$ will contain four vertices with numbers 1, 2,
51, and 52. According to the Corollary 1.1 we can conclude that
there are \textit{at most four} eigenvalues of the Laplace
operator which are not greater than $0.002$. In fact \textit{there
are three} such eigenvalues $\lambda_{0}=0,$ and a double
eigenvalue $ \lambda_{1}=1-\cos(2\pi/100)\approx 0.001973$.

Similar calculations show that in the case when $\omega=0.008$ the
dimension of a uniqueness set $U$ can be taken equal eight and
there are five eigenvalues which are less than $0.008$:
$\lambda_{0}=0$ and two double eigenvalues $\lambda_{1}\approx
0.001973,$ and $ \lambda_{2}=1-\cos(4\pi/100)\approx 0.007885.$

The following Corollary gives a lower bound for each non-zero
eigenvalue.

\begin{col}
If $S=\{v_{1},...,v_{N-k}\}$ is a set of $N-k$ vertices and
$\Lambda(S)=\Lambda(v_{1},...,v_{N-k})$ is the corresponding
Poincare constant, then the following inequality holds true
\begin{equation}
\lambda_{k}\geq \frac{1}{\Lambda(v_{1},...,v_{N-k})}.
\end{equation}
More precisely, if
$$\Lambda_{N-k}=\min_{(v_{1},...,v_{N-k})\in
V(G)}\Lambda(v_{1},...,v_{N-k}),
$$
then
$$
\lambda_{k}\geq \frac{1}{\Lambda_{N-k}}.
$$
\end{col}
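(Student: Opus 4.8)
The plan is to read both the Poincar\'e constant of $S$ and the eigenvalue $\lambda_{k}$ as extremal values of the same Rayleigh-type ratio $\|\mathcal{L}\varphi\|/\|\varphi\|$, and then to force a single nonzero function to be admissible for both via a dimension count. First I would record that, by the definition of the Poincar\'e constant as the infimum of admissible $\Lambda$ in (1.7),
$$
\frac{1}{\Lambda(S)}=\inf_{\varphi\in L_{2}(S),\,\varphi\neq 0}\frac{\|\mathcal{L}\varphi\|}{\|\varphi\|}.
$$
Hence it suffices to produce one nonzero $\varphi\in L_{2}(S)$ satisfying $\|\mathcal{L}\varphi\|\leq \lambda_{k}\|\varphi\|$: such a $\varphi$ forces $1/\Lambda(S)\leq\lambda_{k}$, which is exactly the asserted inequality (1.15).

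Next I would fix an orthonormal basis of eigenfunctions $u_{0},u_{1},\dots,u_{N-1}$ with $\mathcal{L}u_{j}=\lambda_{j}u_{j}$ and set $E_{k}=\mathrm{span}\{u_{0},\dots,u_{k}\}$, so that $\dim E_{k}=k+1$. Since $S$ consists of $N-k$ vertices, $\dim L_{2}(S)=N-k$, and therefore
$$
\dim L_{2}(S)+\dim E_{k}=(N-k)+(k+1)=N+1>N=\dim L_{2}(G).
$$
Consequently the two subspaces must meet nontrivially, and I may choose $0\neq\varphi\in L_{2}(S)\cap E_{k}$. Writing $\varphi=\sum_{j=0}^{k}c_{j}u_{j}$ and using orthonormality together with $\lambda_{j}\leq\lambda_{k}$ for $j\leq k$, I get $\|\mathcal{L}\varphi\|^{2}=\sum_{j=0}^{k}\lambda_{j}^{2}|c_{j}|^{2}\leq\lambda_{k}^{2}\sum_{j=0}^{k}|c_{j}|^{2}=\lambda_{k}^{2}\|\varphi\|^{2}$, i.e. $\|\mathcal{L}\varphi\|\leq\lambda_{k}\|\varphi\|$, which is precisely the admissible test function required above.

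The one point that needs care — and the only genuine obstacle beyond the dimension count — is to confirm that $\varphi$ is an admissible competitor in the infimum, i.e. that $\mathcal{L}\varphi\neq 0$ so the ratio is defined. If $\mathcal{L}\varphi=0$, then $\varphi\in\ker\mathcal{L}$, which for a connected finite graph is one-dimensional and spanned by the nowhere-vanishing function $v\mapsto\sqrt{d(v)}$; a nonzero multiple of it cannot vanish on the nonempty complement $V(G)\setminus S$ (here $k\geq 1$, so $S$ is a proper subset), hence cannot belong to $L_{2}(S)$ unless it is zero, contradicting $\varphi\neq 0$. Thus $\mathcal{L}\varphi\neq 0$, and the chain $1/\Lambda(S)\leq\|\mathcal{L}\varphi\|/\|\varphi\|\leq\lambda_{k}$ establishes $\lambda_{k}\geq 1/\Lambda(v_{1},\dots,v_{N-k})$.

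Finally, the sharper statement is immediate: the bound $\lambda_{k}\geq 1/\Lambda(v_{1},\dots,v_{N-k})$ holds for \emph{every} choice of $N-k$ vertices, so the strongest conclusion comes from the choice minimizing $\Lambda$. Since minimizing $\Lambda$ maximizes $1/\Lambda$, taking $\Lambda_{N-k}=\min_{(v_{1},\dots,v_{N-k})}\Lambda(v_{1},\dots,v_{N-k})$ yields $\lambda_{k}\geq 1/\Lambda_{N-k}$, completing the proof.
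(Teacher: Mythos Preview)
Your proof is correct and is essentially the paper's argument unwound. The paper deduces the corollary from Theorem~6.2, whose proof in turn goes through the uniqueness-set Theorem~1.1 (equivalently Theorem~3.2); unpacking that chain, one finds precisely your dimension count $\dim L_{2}(S)+\dim E_{k}=(N-k)+(k+1)>N$ together with the Poincar\'e-versus-Bernstein comparison on a nonzero element of the intersection, so the two routes coincide once the Paley--Wiener and uniqueness-set language is stripped away.

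One small quibble: the paragraph checking $\mathcal{L}\varphi\neq 0$ is not needed for the ratio $\|\mathcal{L}\varphi\|/\|\varphi\|$ to be defined---only $\varphi\neq 0$ is required for that. What your argument there actually establishes is that $\ker\mathcal{L}\cap L_{2}(S)=\{0\}$, i.e.\ that $\Lambda(S)<\infty$ and the bound is not vacuous; the reasoning you give (the harmonic function $v\mapsto\sqrt{d(v)}$ is nowhere zero on a connected graph, so it cannot lie in $L_{2}(S)$ for proper $S$) is correct and worth keeping, just with the motivation restated.
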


Note that this result is "local" in the sense that any randomly
chosen set  $S=\{v_{1},...,v_{N-k}\} \subset V(G)$ can be used to
obtain an estimate of the form (1.14).

\section{Paley-Wiener spaces on combinatorial graphs}

The Paley-Wiener spaces $PW_{\omega}(G), \omega>0,$ were
introduced in the Definition 1 of the Introduction. Since the
operator $\mathcal{L}$ is bounded it is clear that every function
from $L_{2}(G)$ belongs to a certain Paley-Wiener space. If a
graph $G$ is finite then the space $PW_{\omega}(G)$ is a span of
eigenfunctions whose eigenvalues $\leq \omega$. Note that if
$$
\omega_{\min}=\inf _{\omega\in \sigma(\mathcal{L})}\omega
$$
then the space $PW_{\omega}(G)$ is not trivial if and only if
$\omega\geq \omega_{\min}$.

 Using
the spectral resolution of identity $P_{\lambda}$ we define the
unitary group of operators by the formula
$$
e^{it\mathcal{L}}f=\int_{\sigma(\mathcal{L})}e^{it\tau}dP_{\tau}f,
f\in L_{2}(G), t\in \mathbb{R}.
$$

 The next theorem can be considered
as a form of the Paley-Wiener theorem and it follows from a more
general result in \cite{Pes00}
\begin{thm}
The following statements hold true:

\begin{enumerate}

\item $f\in PW_{\omega}(G)$ if and only if for all $s\in
\mathbb{R}_{+}$ the following Bernstein inequality holds
\begin{equation}
\|\mathcal{L}^{s}f\|\leq \omega^{s}\|f\|;
\end{equation}

\item  the norm of the operator $\mathcal{L}$ in the space
$PW_{\omega}(G)$ is exactly $\omega$;

\item  $f\in L_{2}(G)$ and  the following limit is finite
$$
\lim_{s\rightarrow \infty}
\|\mathcal{L}^{s}f\|^{1/s}=\omega<\infty, s\in \mathbb{R}_{+},
$$
then $\omega$ is the smallest number for which $f\in
PW_{\omega}(G)$;

\item   $f\in PW_{\omega}(G)$ if and only if for every $g\in
L_{2}(G)$ the scalar-valued function of the real variable $t\in
\mathbb{R}^{1}$
$$
\left<e^{it\mathcal{L}}f,g\right>=\sum_{v\in V}
e^{it\mathcal{L}}f(v)\overline{g(v)}
$$
 is bounded on the real line and has an extension to the complex
plane as an entire function of the exponential type $\omega$;

\item  $f\in PW_{\omega}(G)$ if and only if the abstract-valued
function $e^{it\mathcal{L}}f$  is bounded on the real line and has
an extension to the complex plane as an entire function of the
exponential type $\omega$;

\item  $f\in PW_{\omega}(G)$ if and only if the solution $u(t,v),
t\in \mathbb{R}^{1}, v\in V(G),$ of the Cauchy problem for the
corresponding Schrodinger equation
$$
i\frac{\partial u(t,v)}{\partial t}=\mathcal{L}u(t,v),
u(0,v)=f(v), i=\sqrt{-1},
$$
 has analytic extension $u(z,v)$ to the
complex plane $\mathbb{C}$ as an entire function and satisfies the
estimate
$$
\|u(z, \cdot)\|_{L_{2}(G)}\leq e^{\omega|\Im z|}\|f\|_{L_{2}(G)}.
$$
\end{enumerate}
\end{thm}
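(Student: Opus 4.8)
The plan is to treat all six statements as consequences of the spectral theorem for the bounded, self-adjoint, positive operator $\mathcal{L}$, organized around a single spectral-radius identity. First I would record the formula coming from the spectral resolution: for every $f\in L_{2}(G)$ and every $s\geq 0$,
$$
\|\mathcal{L}^{s}f\|^{2}=\int_{\sigma(\mathcal{L})}\tau^{2s}\,d\|P_{\tau}f\|^{2},
$$
and set $\rho(f)=\sup\{\tau:\tau\in\operatorname{supp} d\|P_{\tau}f\|^{2}\}$, the top of the spectral support of $f$; by Definition 1, $f\in PW_{\omega}(G)$ exactly when $\rho(f)\leq\omega$. The key lemma is the identity $\rho(f)=\lim_{s\rightarrow\infty}\|\mathcal{L}^{s}f\|^{1/s}$. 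The bound $\|\mathcal{L}^{s}f\|\leq\rho(f)^{s}\|f\|$ is immediate from the displayed formula; for the matching lower bound I would fix $\varepsilon>0$, observe that $c_{\varepsilon}=\int_{\rho(f)-\varepsilon}^{\rho(f)}d\|P_{\tau}f\|^{2}>0$, and estimate $\|\mathcal{L}^{s}f\|^{2}\geq(\rho(f)-\varepsilon)^{2s}c_{\varepsilon}$, so that $\liminf_{s}\|\mathcal{L}^{s}f\|^{1/s}\geq\rho(f)-\varepsilon$.

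With this lemma the first three statements follow quickly. For (1): if $f\in PW_{\omega}(G)$ then $\rho(f)\leq\omega$ and the upper bound gives the Bernstein inequality for all $s$; conversely, if $\|\mathcal{L}^{s}f\|\leq\omega^{s}\|f\|$ for all $s$ then $\lim_{s}\|\mathcal{L}^{s}f\|^{1/s}\leq\omega$, so $\rho(f)\leq\omega$ and $f\in PW_{\omega}(G)$. Statement (3) is exactly the lemma combined with the meaning of $\rho(f)$. For (2), the inequality $\|\mathcal{L}f\|\leq\omega\|f\|$ on $PW_{\omega}(G)$ is the case $s=1$ of (1), while the reverse bound follows by choosing unit vectors whose spectral mass concentrates near the top of $[0,\omega]\cap\sigma(\mathcal{L})$, making the operator norm exactly $\omega$.

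The remaining statements concern the unitary group, and I would use $e^{it\mathcal{L}}f=\int_{\sigma(\mathcal{L})}e^{it\tau}\,dP_{\tau}f$. For the forward direction of (5), when $\rho(f)\leq\omega$ the integral runs over $[0,\omega]$, so replacing $t$ by $z=t+i\eta$ gives an entire $L_{2}(G)$-valued extension with $\|e^{iz\mathcal{L}}f\|^{2}=\int_{0}^{\omega}e^{-2\eta\tau}\,d\|P_{\tau}f\|^{2}\leq e^{2\omega|\eta|}\|f\|^{2}$ since $0\leq\tau\leq\omega$; statement (4) is the same estimate after pairing with $g$ and applying Cauchy-Schwarz, and (6) is (5) for $u(z,\cdot)=e^{-iz\mathcal{L}}f$, which solves the stated Cauchy problem.

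The converse directions of (4) and (5) — deducing $\rho(f)\leq\omega$ from the exponential-type hypothesis — are the main obstacle, because these functions are only bounded on $\mathbb{R}$ rather than square-integrable there, so the classical $L_{2}$ Paley-Wiener theorem does not apply directly. Here I would invoke the Bernstein inequality for entire functions of exponential type in the vector-valued setting: if $G(z)=e^{iz\mathcal{L}}f$ is entire of exponential type $\omega$ and bounded by $\|f\|$ on $\mathbb{R}$, then $\sup_{t}\|G^{(k)}(t)\|\leq\omega^{k}\sup_{t}\|G(t)\|=\omega^{k}\|f\|$; evaluating at $t=0$ gives $G^{(k)}(0)=(i\mathcal{L})^{k}f$, hence $\|\mathcal{L}^{k}f\|\leq\omega^{k}\|f\|$ for every integer $k$, and the lemma then yields $\rho(f)\leq\omega$. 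The scalar form of Bernstein's inequality applied to $t\mapsto\langle e^{it\mathcal{L}}f,g\rangle$ for all $g\in L_{2}(G)$ handles (4) in the same way. This reduces everything to the abstract Paley-Wiener machinery of \cite{Pes00}, which is the route the paper indicates.
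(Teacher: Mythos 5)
Your proposal is correct and is essentially the argument behind the result the paper relies on: the paper gives no proof of this theorem, deferring entirely to the abstract Paley--Wiener theorem of \cite{Pes00}, and your spectral-radius lemma $\rho(f)=\lim_{s\to\infty}\|\mathcal{L}^{s}f\|^{1/s}$ combined with the (vector-valued) Bernstein inequality for entire functions of exponential type bounded on $\mathbb{R}$ is the standard proof of that abstract statement. The only point worth flagging is item (2): the operator norm of $\mathcal{L}$ on $PW_{\omega}(G)$ equals $\sup\left(\sigma(\mathcal{L})\cap[0,\omega]\right)$, so it is exactly $\omega$ only when $\omega$ lies in the closure of $\sigma(\mathcal{L})\cap[0,\omega]$ --- a hypothesis implicit in the theorem's statement rather than a gap in your argument.
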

  This  Theorem gives the following
stratification
$$
L_{2}(G)=PW_{\omega_{\max}}(G)=\bigcup_{\omega_{\min}\leq
\omega\leq \omega_{\max}}PW_{\omega}(G), PW_{\omega}(G)\subseteq
PW_{\sigma}(G), \omega<\sigma,
$$
of the space of all $L_{2}(G)$-functions. The  Theorem shows that
the notion of Paley-Wiener functions of type $\omega$ on a
combinatorial graph can be completely understood in terms of
familiar  entire functions of exponential type $\omega$ bounded on
the real line.

\section{ Uniqueness sets  for discrete Paley-Wiener functions  and
Plancherel-Polya and Poincare inequalities on graphs}

The definition of uniqueness sets was given in the Introduction.
The following Theorem gives a necessary and sufficient conditions
for being a uniqueness set.
\begin{thm} If $PW_{\omega}(G)$ is finite dimensional for an
$\omega>0$,  then a set of vertices $U\subset V(G)$ is a
uniqueness set for the space $PW_{\omega}(G)$ if and only if
there exists a constant $C_{\omega}$ such that for any $f\in
PW_{\omega}(G)$ the following discrete version of the
Plancherel-Polya inequalities holds true
\begin{equation}
\left(\sum_{u\in U}|f(u)|^{2}\right)^{1/2}\leq\|f\|_{L_{2}(G)}\leq
C_{\omega}\left(\sum_{u\in U}|f(u)|^{2}\right)^{1/2}
\end{equation}
for all $f\in PW_{\omega}(G)$.
\end{thm}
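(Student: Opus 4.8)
The plan is to read both inequalities as a single statement that the sampling (restriction) map is injective and bounded below, and then to use the finite-dimensionality of $PW_{\omega}(G)$ to convert injectivity into a quantitative lower bound. Throughout, let $R\colon PW_{\omega}(G)\to \ell_{2}(U)$ be the restriction map $Rf=f|_{U}$, and abbreviate $\|Rf\|=\left(\sum_{u\in U}|f(u)|^{2}\right)^{1/2}$, so that the claimed inequalities read $\|Rf\|\leq\|f\|\leq C_{\omega}\|Rf\|$. First I would dispose of the left-hand inequality, which needs no hypothesis at all: since $U\subseteq V(G)$, one has $\sum_{u\in U}|f(u)|^{2}\leq\sum_{v\in V(G)}|f(v)|^{2}=\|f\|^{2}$ for every $f\in L_{2}(G)$, so $\|Rf\|\leq\|f\|$; in particular this guarantees $Rf\in \ell_{2}(U)$. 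Thus all the content lies in the upper bound $\|f\|\leq C_{\omega}\|Rf\|$, i.e.\ in the assertion that $R$ is bounded below on $PW_{\omega}(G)$.

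Next I would handle the easy implication: if the Plancherel-Polya inequalities hold, then $U$ is a uniqueness set. Because $PW_{\omega}(G)$ is a linear space, $U$ fails to be a uniqueness set precisely when there is a nonzero $f\in PW_{\omega}(G)$ with $Rf=0$ (take $f$ to be the difference of two functions that agree on $U$). But the upper bound then forces $\|f\|\leq C_{\omega}\|Rf\|=0$, hence $f=0$, a contradiction; so $U$ is a uniqueness set.

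For the converse, that uniqueness implies the Plancherel-Polya inequalities, this is the only place where finite-dimensionality of $PW_{\omega}(G)$ is used, and it is essential. Uniqueness says exactly that $R$ is injective, i.e.\ $Rf=0\Rightarrow f=0$. I would argue by compactness. Consider the unit sphere $\Sigma=\{f\in PW_{\omega}(G):\|f\|=1\}$, which is compact since $PW_{\omega}(G)$ is finite dimensional. The function $f\mapsto\|Rf\|$ is continuous and strictly positive on $\Sigma$: if $\|Rf\|=0$ for some $f\in\Sigma$, injectivity gives $f=0$, contradicting $\|f\|=1$. Hence $\|Rf\|$ attains a minimum $m>0$ on $\Sigma$, and by homogeneity $\|Rf\|\geq m\|f\|$ for all $f\in PW_{\omega}(G)$; taking $C_{\omega}=1/m$ yields the desired bound. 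Equivalently, injectivity makes $f\mapsto\|Rf\|$ a norm on the finite-dimensional space $PW_{\omega}(G)$, and any two norms on such a space are equivalent.

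The single genuine obstacle is the passage from injectivity to a uniform lower bound, and finite-dimensionality removes it completely via the compactness of $\Sigma$ (or equivalence of norms). In the infinite-dimensional setting this implication can fail, which is exactly why the hypothesis that $PW_{\omega}(G)$ be finite dimensional is imposed in the statement.
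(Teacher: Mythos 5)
Your proof is correct, and both directions are handled properly: the trivial left-hand inequality, the easy implication that the lower bound forces injectivity of the restriction map, and the converse via compactness of the unit sphere in the finite-dimensional space $PW_{\omega}(G)$. The paper's own proof reaches the same conclusion by a slightly different route: it observes that $|||f|||=\bigl(\sum_{u\in U}|f(u)|^{2}\bigr)^{1/2}$ is a genuine norm on $PW_{\omega}(G)$ precisely because $U$ is a uniqueness set, notes $|||f|||\leq\|f\|$, and then invokes the closed graph theorem to obtain the reverse inequality, rather than minimizing $\|Rf\|$ over the compact unit sphere. The two arguments are close cousins (both amount to equivalence of norms), but yours is the more careful one with respect to the hypothesis: a closed-graph or bounded-inverse argument requires $PW_{\omega}(G)$ to be complete in the norm $|||\cdot|||$, which is automatic in finite dimensions but not otherwise, so the paper's proof uses finite-dimensionality only implicitly, whereas your compactness argument makes the role of that hypothesis explicit and also explains, as you note at the end, why the implication from injectivity to a uniform lower bound can fail in the infinite-dimensional setting. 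You also spell out the converse implication (Plancherel--Polya implies uniqueness), which the paper leaves tacit.
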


\begin{proof}
The closed linear subspace $PW_{\omega}(G)$ is a Hilbert space
with respect to the norm of $L_{2}(G)$. At the same time since
$U\subset V(G)$ is a uniqueness set for $PW_{\omega}(G)$  the
functional
$$
|||f|||=\left(\sum_{u\in U}|f(u)|^{2}\right)^{1/2}
$$
defines another  norm on $PW_{\omega}(G)$. Indeed, the only
property which should be verified is that the condition
$|||f|||=0, f\in PW_{\omega}(G)$, implies that $f$ is identical
zero on entire graph but it is guaranteed by the fact that $U$ is
a uniqueness set for $PW_{\omega}(G)$.

Since for any $f\in PW_{\omega}(G)$ the norm $|||f|||$ is not
greater than the original norm $\|f\|_{L_{2}(G)}$  the closed
graph Theorem implies the existence of a constant $C_{\omega}$ for
which the reverse inequality holds true.
\end{proof}

We will also need the following Corollary which is easy to prove.
\begin{col}
In the same notations as above if $\mathcal{B}$ is an operator in
$L_{2}(G)$ such that its restriction to $PW_{\omega}(G)$ is a
bounded invertible operator from $PW_{\omega}(G)$ onto
$PW_{\omega}(G)$ then
\begin{equation}
\frac{1}{\|\mathcal{B}\|}\left(\sum_{u\in
U}|\mathcal{B}f(u)|^{2}\right)^{1/2}\leq \|f\|_{L_{2}(G)}\leq
\|\mathcal{B}^{-1}\|C_{\omega}\left(\sum_{u\in
U}|\mathcal{B}f(u)|^{2}\right)^{1/2}
\end{equation}
for all $f\in PW_{\omega}(G)$.
\end{col}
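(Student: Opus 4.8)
The plan is to reduce everything to Theorem 3.1 applied not to $f$ itself but to its image $\mathcal{B}f$. Since the restriction of $\mathcal{B}$ to $PW_{\omega}(G)$ maps this space onto itself, for every $f\in PW_{\omega}(G)$ the function $g=\mathcal{B}f$ again lies in $PW_{\omega}(G)$. Theorem 3.1 then applies verbatim to $g$, yielding
$$
\left(\sum_{u\in U}|\mathcal{B}f(u)|^{2}\right)^{1/2}\leq \|\mathcal{B}f\|_{L_{2}(G)}\leq C_{\omega}\left(\sum_{u\in U}|\mathcal{B}f(u)|^{2}\right)^{1/2}.
$$
This is the single nontrivial input; the rest is bookkeeping with operator norms.

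For the upper estimate on $\|f\|$, I would write $f=\mathcal{B}^{-1}(\mathcal{B}f)$ and use boundedness of the inverse, $\|f\|_{L_{2}(G)}\leq \|\mathcal{B}^{-1}\|\,\|\mathcal{B}f\|_{L_{2}(G)}$. Substituting the right-hand side of the displayed inequality then gives exactly $\|f\|_{L_{2}(G)}\leq \|\mathcal{B}^{-1}\|C_{\omega}\left(\sum_{u\in U}|\mathcal{B}f(u)|^{2}\right)^{1/2}$. For the lower estimate I would go in the opposite direction: from $\|\mathcal{B}f\|_{L_{2}(G)}\leq \|\mathcal{B}\|\,\|f\|_{L_{2}(G)}$ together with the left-hand side of the displayed inequality I obtain $\left(\sum_{u\in U}|\mathcal{B}f(u)|^{2}\right)^{1/2}\leq \|\mathcal{B}\|\,\|f\|_{L_{2}(G)}$, and dividing by $\|\mathcal{B}\|$ produces the claimed lower bound.

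The only point requiring care --- and the one I would flag as the main obstacle, though it is mild --- is the interpretation of the operator norms $\|\mathcal{B}\|$ and $\|\mathcal{B}^{-1}\|$: they should be read as the norms of the restrictions $\mathcal{B}\colon PW_{\omega}(G)\to PW_{\omega}(G)$ and of its inverse, since these are the only maps for which boundedness and invertibility are hypothesized. With that reading the two operator inequalities used above hold for all $f,\mathcal{B}f\in PW_{\omega}(G)$, and the uniqueness-set hypothesis carried over from Theorem 3.1 guarantees that $|||\cdot|||$ is genuinely a norm on $PW_{\omega}(G)$, so no degeneracy can occur. Finite dimensionality of $PW_{\omega}(G)$ is inherited from the hypothesis of Theorem 3.1 --- it is what makes the closed-graph argument there available --- and need not be invoked again here.
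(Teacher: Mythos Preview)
Your argument is correct and is precisely the natural one: apply Theorem 3.1 to $g=\mathcal{B}f\in PW_{\omega}(G)$ and sandwich with the operator-norm inequalities $\|\mathcal{B}f\|\leq\|\mathcal{B}\|\,\|f\|$ and $\|f\|\leq\|\mathcal{B}^{-1}\|\,\|\mathcal{B}f\|$. The paper does not actually write out a proof of this corollary (it only remarks that it ``is easy to prove''), so your approach is exactly what the author has in mind.
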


 \begin{rem}
 It is worth to note
that the statement  similar to the Theorem 3.1 does not hold true
for Paley-Wiener spaces on $\mathbb{R}^{d}$. Namely, not every
uniqueness set is associated with a corresponding Plancherel-Polya
inequality. Sets of points $\{x_{j}\}\in \mathbb{R}^{d}$  for
which a Plancherel-Polya inequality takes place   are known as
sampling sets.
\end{rem}

For a general graph $G$ an important example of the operator
$\mathcal{B}$ from the Corollary is the operator $(\varepsilon
I+\mathcal{L})^{s}$ for any positive $\varepsilon
>0$ and any real $s\in \mathbb{R}$.

 Our
next goal is to develop some sufficient conditions on a set of
vertices for being a uniqueness set for a Paley-Wiener subspace.
It turns out that it is easier to understand compliments of
uniqueness sets for spaces $PW_{\omega}(G)$.

\bigskip

We now turn to the notion of a $\Lambda$-set which was introduced
in the Definition 3 in the Introduction. Let us consider one of
our main examples of $\Lambda$-sets.

\begin{exmp}

Let  $v\in V$ be any vertex  in a connected graph $G(V(G), E(G))$.
In this case the corresponding space $L_{2}(\{v\})$ consists of
all functions proportional to the Dirac measure $\delta_{v}$.
Simple calculations show
$$
\mathcal{L}\delta_{v}(v)=1,
\mathcal{L}\delta_{v}(w)=-\frac{1}{\sqrt{d(w)d(v)}}, w\sim v,
$$
and $\mathcal{L}\delta_{v}(u)=0$ for all other vertices $u\in
V(G)$. Thus we have
$$
\|\mathcal{L}f\|=1+\frac{1}{d(v)}\sum_{w\sim v}\frac{1}{d(w)}
$$
and since $\|\delta_{v}\|=1$ we obtain
$$
\|\delta_{v}\|=\Lambda (v)\|\mathcal{L}\delta_{v}\|,
$$
where $\Lambda(v)$  is

\begin{equation}
\frac{1}{\sqrt{2}}\leq\Lambda(v)=\frac{1}{\sqrt{1+\frac{1}{d(v)}\sum_{w\sim
v}\frac{1}{d(w)}}}\leq \sqrt{\frac{d(G)}{1+d(G)}}<1, d(G)\geq 1.
\end{equation}

This example  can be used to construct examples of
\textit{infinite} $\Lambda$-sets. For example, we can consider an
infinite path on a rectangular grid or on a tree. These situations
will be generalized in the Lemma 3.6.

\end{exmp}

The role of $\Lambda$-sets is explained in the following Theorem.

\begin{thm}
If a set $S\subset V(G)$  is a $\Lambda$-set,  then $S$ is
$PW_{\omega}(G)$-removable for any $\omega < 1/\Lambda$ i. e. the
set  $U=V(G)\backslash S$ is a uniqueness set for any space
$PW_{\omega}(G)$ with $ \omega< 1/\Lambda$.
\end{thm}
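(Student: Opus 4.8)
The plan is to use the linearity of $PW_{\omega}(G)$ to reduce the uniqueness statement to a vanishing statement. Since the difference of two functions in $PW_{\omega}(G)$ again lies in $PW_{\omega}(G)$, the set $U = V(G)\backslash S$ fails to be a uniqueness set precisely when some \emph{nonzero} $f \in PW_{\omega}(G)$ satisfies $f(u) = 0$ for every $u \in U$. I would therefore fix such an $f$ and aim to force $f \equiv 0$. The first observation is that a function vanishing on all of $U = V(G)\backslash S$ is supported entirely on $S$, so that $f$ belongs to the space $L_{2}(S)$ in the sense of the notation introduced just before Definition 3. This is the step that makes the $\Lambda$-set hypothesis applicable to $f$ itself.

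Once $f \in L_{2}(S)$ is in hand, I would play two inequalities against one another. Because $S$ is a $\Lambda$-set, the Poincaré inequality (1.7) applies to $f$ and bounds $\|\mathcal{L}f\|$ from below,
$$
\|f\| \leq \Lambda \|\mathcal{L} f\|.
$$
Because $f \in PW_{\omega}(G)$, the Bernstein inequality from part (1) of Theorem 2.1, specialized to $s = 1$, bounds the same quantity from above,
$$
\|\mathcal{L} f\| \leq \omega \|f\|.
$$
Chaining these yields $\|f\| \leq \Lambda\omega\,\|f\|$. The hypothesis $\omega < 1/\Lambda$ forces $\Lambda\omega < 1$, and the inequality $\|f\| \leq \Lambda\omega\,\|f\|$ with $\Lambda\omega < 1$ is consistent only with $\|f\| = 0$. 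Hence $f \equiv 0$ on $V(G)$, contradicting the choice of a nonzero witness, and so no such witness exists: $U$ is a uniqueness set for $PW_{\omega}(G)$.

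I do not expect a genuine obstacle here, as the argument is essentially the confrontation of the Poincaré lower bound against the Bernstein upper bound on $\|\mathcal{L}f\|$. The only point deserving a moment of care is the opening reduction, where I must correctly identify "$f$ vanishes on $U$" with "$f \in L_{2}(S)$"; this is what transfers the $\Lambda$-set estimate, which is stated for functions supported on $S$, onto the Paley-Wiener function under consideration. Note that, unlike the characterization in Theorem 3.1, this route requires no finite-dimensionality assumption on $PW_{\omega}(G)$, so the same proof covers infinite graphs with infinite $\Lambda$-sets.
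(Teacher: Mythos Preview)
Your proposal is correct and follows essentially the same argument as the paper: both confront the Poincar\'e inequality $\|f\|\le\Lambda\|\mathcal{L}f\|$ for $f\in L_2(S)$ against the Bernstein inequality $\|\mathcal{L}f\|\le\omega\|f\|$ from Theorem~2.1, and use $\Lambda\omega<1$ to force $f=0$. The only cosmetic difference is that the paper works directly with the difference $f-g$ of two Paley--Wiener functions coinciding on $U$, whereas you first reduce to a single function supported on $S$; the content is identical.
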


\begin{proof}
If $f,g\in PW_{\omega}(G)$ then $f-g\in PW_{\omega}(G)$ and
according to the Theorem 2.1 the following Bernstein inequality
holds true
\begin{equation}
\|\mathcal{L}(f-g)\|\leq \omega\|f-g\|.
\end{equation}
If $f$ and $g$ coincide on $U=V(G)\backslash S$ then $f-g$ belongs
to  $L_{2}(S)$ and since $S$ is a $\Lambda$-set then we will have
$$
\|f-g\|\leq \Lambda\|\mathcal{L}(f-g)\|, f-g\in L_{2}(S).
$$
 Thus, if $f-g$ is not zero and $\omega<1/\Lambda$ we have the
following inequalities
\begin{equation}
\|f-g\|\leq \Lambda\|\mathcal{L}(f-g)\|\leq \Lambda
\omega\|f-g\|<\|f-g\|,
\end{equation}
which contradict to the assumption that $f-g$ is not identical
zero. It proves the Theorem.
\end{proof}

This Theorem and the estimate (3.5) show  that for any graph $G$
spaces $PW_{\omega}(G)$ with
\begin{equation}
0<\omega<\sqrt{\frac{1+d(G)}{d(G)}}=\Omega_{G}>1, d(G)=\sup_{v\in
V(G)}d(v),
\end{equation}
have non-trivial uniqueness sets.
\bigskip

Now we show that every finite set with non-empty boundary admits a
Poincare inequality.

\begin{thm} Suppose that $S$ is a finite set with non-empty
boundary.
If $\mathcal{L}_{\Gamma(S)}$ is the Laplacian on the graph
$\Gamma(S)$  and $\lambda_{1}(\Gamma(S))$ is its smallest non-zero
eigenvalue, then for every $\varphi\in L_{2}(S)$ the following
inequality holds true
\begin{equation}
\|\varphi\|_{L_{2}(G)}\leq
\frac{1}{\lambda_{1}(\Gamma(S))}\|\mathcal{L}_{G}\varphi\|_{L_{2}(G)}.
\end{equation}
\end{thm}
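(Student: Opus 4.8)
The plan is to transfer the inequality to the doubled graph $\Gamma(S)$ by an \emph{odd} (antisymmetric) extension of $\varphi$. First I would record the Dirichlet-form representation of the normalized Laplacian: for any function $h$ on a graph $H$ with degrees $d_H$,
$$
\langle \mathcal{L}_H h, h\rangle = \sum_{\{x,y\}\in E(H)}\left|\frac{h(x)}{\sqrt{d_H(x)}} - \frac{h(y)}{\sqrt{d_H(y)}}\right|^2,
$$
which follows by the summation-by-parts computation implicit in the definition (1.5). A preliminary observation is that, since $G$ is connected and every $G$-neighbour of an interior vertex lies in $\overline{S}=S\cup bS$, both $\varphi$ and $\mathcal{L}_G\varphi$ are supported on $\overline{S}$; in particular the edges of $G$ that contribute to $\langle\mathcal{L}_G\varphi,\varphi\rangle$ are exactly the edges inside $S$ and the edges joining $S$ to $bS$.

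Next I would introduce the test function $\tilde\varphi$ on $\Gamma(S)$: set $\tilde\varphi=\varphi$ on the interior of the first copy $\overline{S}_1$, set $\tilde\varphi=-\varphi$ on the interior of the second copy $\overline{S}_2$, and set $\tilde\varphi=0$ on the shared boundary $bS$. Because interior vertices retain their $G$-degree inside $\Gamma(S)$ and $\tilde\varphi$ vanishes on $bS$, a direct edge-by-edge comparison should yield the two key identities
$$
\|\tilde\varphi\|_{L_2(\Gamma(S))}^2 = 2\|\varphi\|_{L_2(G)}^2, \qquad \langle \mathcal{L}_{\Gamma(S)}\tilde\varphi,\tilde\varphi\rangle = 2\langle \mathcal{L}_G\varphi,\varphi\rangle .
$$
The pleasant point here is that the boundary-boundary edges of $\Gamma(S)$ contribute nothing to the Dirichlet form, since both of their endpoints carry the value $0$; thus the precise boundary degrees, and any multiple edges that the doubling might create, are completely irrelevant, and the matching of forms reduces to routine bookkeeping.

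The heart of the argument is the variational estimate $\langle \mathcal{L}_{\Gamma(S)}\tilde\varphi,\tilde\varphi\rangle \ge \lambda_1(\Gamma(S))\,\|\tilde\varphi\|^2$, which is valid provided $\tilde\varphi$ is orthogonal to the kernel of $\mathcal{L}_{\Gamma(S)}$. This orthogonality is the step I expect to require the most care. The kernel of the normalized Laplacian consists of the functions equal to $\sqrt{d_{\Gamma(S)}}$ times a function that is constant on each connected component, while $\tilde\varphi$ is antisymmetric under the involution $\iota$ that swaps the two copies and fixes $bS$. Hence it suffices to show that every connected component of $\Gamma(S)$ is $\iota$-invariant: this forces the component-constant to take equal values on the two copies of each interior vertex, so that the inner product of $\tilde\varphi$ with any kernel element collapses to $0$. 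The $\iota$-invariance is exactly where connectedness of $G$ enters. I would argue that each connected component of the induced graph $\overline{S}$ must contain a boundary vertex, since otherwise it would be a full connected component of $G$ contained in the proper subset $S$, contradicting connectedness of $G$; and because the two copies are glued precisely along $bS$, the two copies of each component of $\overline{S}$ merge into a single $\iota$-invariant component of $\Gamma(S)$.

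Finally I would assemble the pieces. The two identities together with the variational estimate give $\langle\mathcal{L}_G\varphi,\varphi\rangle \ge \lambda_1(\Gamma(S))\,\|\varphi\|^2$, and the Cauchy--Schwarz bound $\langle\mathcal{L}_G\varphi,\varphi\rangle \le \|\mathcal{L}_G\varphi\|\,\|\varphi\|$ converts this quadratic-form inequality into $\lambda_1(\Gamma(S))\,\|\varphi\| \le \|\mathcal{L}_G\varphi\|$ upon dividing by $\|\varphi\|$, with the case $\varphi=0$ being trivial. This is precisely the claimed Poincar\'e inequality (3.8).
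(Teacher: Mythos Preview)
Your proof is correct and rests on the same core idea as the paper's: extend $\varphi$ antisymmetrically to $\Gamma(S)$, observe that the extension is orthogonal to the kernel of $\mathcal{L}_{\Gamma(S)}$, and invoke the spectral gap. The execution differs in one respect worth noting. The paper computes $\mathcal{L}_{\Gamma(S)}F_{\varphi}$ pointwise, obtains the $L_{2}$ inequality $\|\mathcal{L}_{\Gamma(S)}F_{\varphi}\|\le\sqrt{2}\,\|\mathcal{L}_{G}\varphi\|$ (strict inequality in general, since $\mathcal{L}_{G}\varphi$ does not vanish on $bS$), and then uses the eigenfunction expansion to get $\|\mathcal{L}_{\Gamma(S)}F_{\varphi}\|\ge\lambda_{1}\|F_{\varphi}\|$. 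You instead compare Dirichlet forms, obtaining the exact identity $\langle\mathcal{L}_{\Gamma(S)}\tilde\varphi,\tilde\varphi\rangle=2\langle\mathcal{L}_{G}\varphi,\varphi\rangle$, apply the Rayleigh-quotient bound, and close with Cauchy--Schwarz. Your route sidesteps the pointwise computation and the issue of boundary degrees entirely; the paper's route avoids the final Cauchy--Schwarz step. Your treatment of the orthogonality is also more careful: the paper tacitly assumes $\Gamma(S)$ is connected, whereas you verify that each component of $\Gamma(S)$ is invariant under the swap involution, which is what is actually needed.
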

\begin{proof}

Let us remind the construction of the graph $\Gamma(S)$. We
consider $\overline{S}=S\cup b S$ as an induced graph,  take two
copies of it which  will be denoted as $\overline{S}_{1}$ and
$\overline{S}_{2}$ and identify every vertex $v\in b S\subset
\overline{S}_{1}$ with "the same" vertex $v\in bS\subset
\overline{S}_{2}$. Now we construct  an embedding  of the space
$L_{2}(S) $ into the space $L_{2}(\Gamma(S))$. If $\varphi\in
L_{2}(S) $ then its image $F_{\varphi} \in L_{2}(\Gamma(S))$ is
defined according to the following rules:
\begin{enumerate}

\item $F_{\varphi}(v)=\varphi(v)$, for every $v\in
\overline{S}_{1}$,

\item $F_{\varphi}(v)=-\varphi(v)$, for every $v\in
\overline{S}_{2}$.

\end{enumerate}

It is clear  the following holds true for every function
$\varphi\in L_{2}(S)$
\begin{equation}
\|F_{\varphi}\|_{L_{2}(\Gamma(S))}=\sqrt{2}\|\varphi\|_{L_{2}(G)}.
\end{equation}
We will use notations $d_{G}(v), d_{\overline{S}}(v),
d_{\Gamma(S)}(v)$ for degrees of a vertex $v$ in $\overline{S}$
considered as a vertex of $G$, or as a vertex of the induced graph
$\overline{S}$ or as a vertex of the new graph $\Gamma(S)$
respectively. It is clear that if $v\in S$ then
$$d_{\Gamma(S)}(v)=
d_{\overline{S}}(v)=d_{G}(v)
$$ and if $v\in bS$ then
$d_{\Gamma(S)}(v)=2d_{\overline{S}}(v)$.

As direct calculations show if $v\in S$ and $\varphi \in L_{2}(G)$
then
$$
\mathcal{L}_{\Gamma(S)}F_{\varphi}=\mathcal{L}_{G}\varphi,\varphi
\in L_{2}(G),
$$
and $$ \mathcal{L}_{\Gamma(S)}F_{\varphi}=0,\varphi \in L_{2}(G).
$$
 Thus we obtain  the following estimate
\begin{equation}\|\mathcal{L}_{\Gamma(S)}F_{\varphi}\|_{L_{2}(\Gamma(S))}\leq
\sqrt{2}\|\mathcal{L}_{G}\varphi\|_{L_{2}(G)}.
\end{equation}
The eigenfunction of $\mathcal{L}_{\Gamma(S)}$ that corresponds to
$0$-eigenvalue is given by the formula $\psi_{0}(v)=\sqrt{d_{
\Gamma(S)}(v)}, v\in \Gamma(S)$. In particular, for $v\in S$ one
has $\psi_{0}(v)=\sqrt{d_{ G}(v)}$ which coincides for $v\in S$
with the harmonic function for  $\mathcal{L}_{G}$.

 Since every function
$F_{\varphi}$ is "odd" it is orthogonal to the subspace spanned by
$\psi_{0}$. Because of it if $\{\psi_{j}\}, j=0,1,..., N,$ is a
complete orthonormal system of eigenfunctions of
$\mathcal{L}_{\Gamma(S)}$ in $L_{2}(\Gamma(S))$ and
$0=\lambda_{0}(\Gamma(S))< \lambda_{1}(\Gamma(S))\leq ...\leq
\lambda_{N}(\Gamma(S)),$ is a set of their corresponding
eigenvalues the following formulas hold true
$$
F_{\varphi}=\sum_{j=1}^{N}
\left<F_{\varphi},\psi_{j}\right>\psi_{j}
$$
and
$$ \mathcal{L}_{\Gamma(S)}F_{\varphi}=\sum_{j=1}^{N}
\lambda_{j}(\Gamma(S))\left<F_{\varphi},\psi_{j}\right>\psi_{j}.
$$
Finally we obtain
$$
\|\mathcal{L}_{\Gamma(S)}F_{\varphi}\|_{L_{2}(\Gamma(S))}^{2}=\sum_{j=1}^{N}
\lambda_{j}^{2}(\Gamma(S))|\left<F_{\varphi},\psi_{j}\right>|^{2}\geq
\lambda_{1}^{2}(\Gamma(S))\|F_{\varphi}\|_{L_{2}(\Gamma(S))}^{2}.
$$
This inequality along with (3.8) and (3.9) imply the Theorem.
\end{proof}

We are going to make use of some known estimates \cite{Ch} on the
first eigenvalue of a finite graph. A graph $\Gamma$ has
\textit{isoperimetric dimension} $\delta$ with
\textit{isoperimetric constant} $c_{\delta}$ if for any subset $W$
of $V(\Gamma)$, the number of edges between $W$ and the complement
$W ^{'}$ of $W$, denoted by $|\partial W|=|E(W, W^{'})|$,
satisfies
$$|E(W, W^{'})|\geq c_{\delta}(vol
W)^{\frac{\delta-1}{\delta}}.
$$
If $\delta$ is the isoperimetric dimension of the graph $\Gamma$
then there exists a constant $C_{\delta}$ which depends just on
$\delta$ such that

$$
\lambda_{1}(\Gamma)>C_{\delta}\left(\frac{1}{vol
\Gamma}\right)^{2/\delta},
$$
were the volume $vol(\Gamma)$
 of a graph $\Gamma$ is defined as
 $$
 vol(\Gamma)=\sum_{v\in V(\Gamma)}d(v).
 $$
From the construction of the graph $\Gamma(S)$ it is clear that
$$
vol(\Gamma(S))=2\left(vol S+vol_{\overline{S}} (b S)\right)\leq
2(vol S+ vol (b S)),
$$
where $vol_{\overline{S}}( b S)$ means the volume of the set $bS$
is calculated under the assumption that this set is a subset if
the $\textit{induced}$ graph $\overline{S}$. There is another
lower estimate of the first non-zero eigenvalue of a graph in
terms of the Cheeger constant $h_{\Gamma}$. To define $h_{\Gamma}$
we define
$$
h_{\Gamma}(W)=\frac{|E(W,W^{'})|}{\min(volW, vol W^{'})}
$$
and then
$$
h_{\Gamma}=\min_{W}h_{\Gamma}(W).
$$
The following lower estimate is known for any connected graph
$\Gamma$
$$
\lambda_{1}(\Gamma)>\frac{h_{\Gamma}^{2}}{2}.
$$
There is a lower estimate on the first eigenvalue in terms of a
diameter and volume  of a graph. Namely, the following estimate
follows from the variational principal \cite{Ch}
 $$
\lambda_{1}(\Gamma)\geq \frac{1}{D(\Gamma)vol(\Gamma)},
$$
where $D(\Gamma)$ is the diameter of the graph $\Gamma$. This
estimate along with the last Theorem gives the following estimate
for $\Gamma=\Gamma(S)$
$$
\|f\|_{L_{2}(G)}\leq
D(\Gamma(S))vol(\Gamma(S))\|\mathcal{L}_{G}f\|_{L_{2}(G)}
$$
for every function from $L_{2}(G)$ with support in $S$. The
construction of the graph $\Gamma(S)$ implies the inequality
$$
D(\Gamma(S))\leq 2D(\overline{S}),
$$
where $\overline{S}=S\bigcup bS$ is considered as the induced
graph.

 All together it gives the following result.
\begin{thm}
If $S\subset V(G)$ is a finite set  with non-empty boundary and
$\Gamma(S)$ is the same as above then for any $\varphi\in
L_{2}(S)$ the following Poincare-type inequalities hold true
$$
\|\varphi\|_{L_{2}(G)}\leq\frac{2}{h_{\Gamma(S)}^{2}}
\|\mathcal{L}_{G}\varphi\|_{L_{2}(G)},
$$

$$
\|\varphi\|_{L_{2}(G)}\leq
D(\Gamma(S))vol(\Gamma(S))\|\mathcal{L}_{G}\varphi\|_{L_{2}(G)},
$$

$$
\|\varphi\|_{L_{2}(G)}\leq 2D(S)\left(vol S+vol_{\overline{S}} (b
S) \right)\|\mathcal{L}_{G}\varphi\|_{L_{2}(G)}.
$$

There exists a  $C_{\delta}>0$ which depends just on the
isoperimetric constant $\delta$ of the graph $\Gamma(S)$ such that
\begin{equation}
\|\varphi\|_{L_{2}(G)}\leq C_{\delta}^{-1}\left(vol
\Gamma(S)\right)^{2/\delta}\|\mathcal{L}_{G}\varphi\|_{L_{2}(G)},
\end{equation}
$$
\|\varphi\|_{L_{2}(G)}\leq C_{\delta}^{-1}\left(2(vol S+
vol_{\overline{S}} (b
S))\right)^{2/\delta}\|\mathcal{L}_{G}\varphi\|_{L_{2}(G)}.
$$
\end{thm}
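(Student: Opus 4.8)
The plan is to treat this Theorem as a direct corollary of Theorem 3.3 combined with the four standard lower bounds for the first nonzero eigenvalue $\lambda_{1}(\Gamma(S))$ of a finite connected graph that were recalled immediately above its statement. Theorem 3.3 already carries the single nontrivial analytic ingredient, namely
$$
\|\varphi\|_{L_{2}(G)}\leq \frac{1}{\lambda_{1}(\Gamma(S))}\|\mathcal{L}_{G}\varphi\|_{L_{2}(G)},\qquad \varphi\in L_{2}(S),
$$
so every displayed inequality should follow simply by replacing $\lambda_{1}(\Gamma(S))^{-1}$ with an explicit upper bound, i.e. by inserting a lower bound for $\lambda_{1}(\Gamma(S))$. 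First I would record that $\Gamma(S)$ is a finite connected graph (connectedness follows from the doubling construction once $S$ has nonempty boundary and $\overline{S}$ is connected), so that each eigenvalue estimate quoted from \cite{Ch} genuinely applies to it.

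For the first inequality I would invoke the Cheeger-type bound $\lambda_{1}(\Gamma)>h_{\Gamma}^{2}/2$; inverting gives $\lambda_{1}(\Gamma(S))^{-1}<2/h_{\Gamma(S)}^{2}$ and the first line is immediate. For the second inequality I would use the variational diameter--volume estimate $\lambda_{1}(\Gamma)\geq \left(D(\Gamma)\,vol(\Gamma)\right)^{-1}$, whose inversion produces exactly the constant $D(\Gamma(S))\,vol(\Gamma(S))$. The third inequality is then obtained from the second by substituting the two elementary geometric estimates for the doubled graph, $vol(\Gamma(S))\leq 2\left(vol\,S+vol_{\overline{S}}(bS)\right)$ and $D(\Gamma(S))\leq 2D(\overline{S})$, both read off directly from the construction of $\Gamma(S)$ (identifying each boundary vertex across the two copies doubles volumes, and any geodesic in $\Gamma(S)$ projects onto a walk in $\overline{S}$ of at most twice its length). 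For the final two inequalities I would apply the isoperimetric-dimension bound $\lambda_{1}(\Gamma)>C_{\delta}\left(vol\,\Gamma\right)^{-2/\delta}$ with $\delta, C_{\delta}$ referring to $\Gamma(S)$; inversion yields $C_{\delta}^{-1}\left(vol\,\Gamma(S)\right)^{2/\delta}$, and one further use of the volume estimate gives the last displayed line.

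I do not expect a genuine obstacle, since all the depth sits inside Theorem 3.3 and the remaining work is bookkeeping. The two places I would watch are, first, confirming that the cited spectral bounds are stated for precisely the normalized Laplacian $\mathcal{L}_{\Gamma(S)}$, so that the $\sqrt{2}$ factors arising from the odd embedding $F_{\varphi}$ have already been absorbed in Theorem 3.3 and do not resurface; and second, tracking the numerical constants in the geometric estimates. In particular the third line's constant $2D(S)$ is sharper than the $4D(\overline{S})$ that a naive combination of $D(\Gamma(S))\leq 2D(\overline{S})$ with $vol(\Gamma(S))=2(\cdots)$ would give, which indicates that for the odd functions $F_{\varphi}$ one should exploit a tighter diameter bound (effectively $D(\overline{S})$ rather than $2D(\overline{S})$), or else accept a harmless adjustment of the leading constant.
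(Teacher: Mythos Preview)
Your proposal is correct and matches the paper's own argument exactly: the paper simply combines Theorem~3.3 with the Cheeger, diameter--volume, and isoperimetric lower bounds for $\lambda_{1}(\Gamma(S))$ quoted from \cite{Ch}, together with the elementary estimates $vol(\Gamma(S))=2\bigl(vol\,S+vol_{\overline{S}}(bS)\bigr)$ and $D(\Gamma(S))\leq 2D(\overline{S})$, and then states the theorem without further proof. Your caution about the constant in the third line is well placed---the paper does not supply any sharper argument there, and the displayed $2D(S)$ appears to be a slip for the $4D(\overline{S})$ (or at best $2D(\overline{S})$) that the recorded estimates actually yield.
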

Note that since the isoperimetric dimension $\delta$ is a
generalization of such notion as the dimension of a manifold, the
estimate (3.10) is a generalization of the following estimate for
the classical Laplace operator $\Delta$ on a compact domain
$S\subset \mathbb{R}^{d}$
\begin{equation}
\|\varphi\|_{L_{2}(\mathbb{R}^{d})}\leq C_{d}(diam S)^{2}\|\Delta
\varphi\|_{L_{2}(\mathbb{R}^{d})}, \varphi \in C_{0}^{\infty}(S).
\end{equation}

In the case of $\mathbb{R}^{d}$ such kind inequalities play an
important role  in harmonic analysis and differential equations
and usually associated with the names of Wirtinger, Poincare, and
Sobolev \cite{FTT}, \cite{HLP}, \cite{T2}.

 The following two Lemmas  describe some \textit{infinite}
 $\Lambda$-sets. The first Lemma is obvious.
\begin{lem}
Suppose that   a set of vertices $S\subset V(G)$ (finite or
infinite) has the property that for any $v\in S$ its closure
$\overline{v}=v\cup bv$ does not contain other points of $S$, then
$S$ is a $\Lambda$-set with $\Lambda=1$, i. e.
$$
\|\varphi\|\leq \|\mathcal{L}\varphi\|, \varphi\in L_{2}(S).
$$
\end{lem}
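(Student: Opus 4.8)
The plan is to observe that the hypothesis is precisely the statement that $S$ is an \emph{independent} set of $G$: if $v\in S$ then no neighbour of $v$ can lie in $S$, since every such neighbour belongs to $\overline{v}=v\cup bv$, which by assumption meets $S$ only in $v$. Consequently, for any $\varphi\in L_{2}(S)$ and any $v\in S$, all the neighbours $u\sim v$ lie outside $S$ and therefore satisfy $\varphi(u)=0$. This is the whole content of the condition, and everything else is a direct computation.

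With this in hand the argument reduces to a one-line evaluation of $\mathcal{L}\varphi$ on $S$. Using the defining formula (1.5), for $v\in S$ every term $\varphi(u)/\sqrt{d(u)}$ with $u\sim v$ vanishes, so
$$
\mathcal{L}\varphi(v)=\frac{1}{\sqrt{d(v)}}\sum_{v\sim u}\frac{\varphi(v)}{\sqrt{d(v)}}=\frac{1}{\sqrt{d(v)}}\cdot d(v)\cdot\frac{\varphi(v)}{\sqrt{d(v)}}=\varphi(v).
$$
Thus $\mathcal{L}\varphi$ agrees with $\varphi$ at every vertex of $S$; in other words, on an independent set the normalised Laplacian acts as the identity.

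Finally I would split the global norm over $S$ and its complement. Since
$$
\|\mathcal{L}\varphi\|^{2}=\sum_{v\in V(G)}|\mathcal{L}\varphi(v)|^{2}\geq\sum_{v\in S}|\mathcal{L}\varphi(v)|^{2}=\sum_{v\in S}|\varphi(v)|^{2}=\|\varphi\|^{2},
$$
where the inequality only discards the nonnegative contributions of vertices outside $S$, we obtain $\|\varphi\|\leq\|\mathcal{L}\varphi\|$, which is exactly the Poincar\'e inequality (1.7) with $\Lambda=1$. There is essentially no obstacle here; the only point requiring a moment's care is unwinding the normalisation $1/\sqrt{d(v)}$ in the definition of $\mathcal{L}$ so as to verify that the diagonal action on independent vertices is precisely the identity, together with noting that the argument holds verbatim for infinite $S$, since it uses only the lower bound of $\|\mathcal{L}\varphi\|$ by its restriction to $S$ and never requires summing over the (possibly infinite) boundary.
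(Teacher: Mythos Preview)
Your proof is correct. The paper itself does not prove this lemma---it simply declares ``The first Lemma is obvious'' and moves on---so your argument is exactly the natural unpacking of that claim: recognise the hypothesis as independence of $S$, compute $\mathcal{L}\varphi(v)=\varphi(v)$ on $S$ from (1.5), and bound $\|\mathcal{L}\varphi\|$ from below by its restriction to $S$.
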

\begin{lem}
Suppose that for  a set of vertices $S\subset V(G)$ (finite or
infinite) the following conditions hold true:

\begin{enumerate}

\item every point from $S$ is adjacent to a point from the
boundary;

\item for every $v\in S$ there exists at least one adjacent point
$u_{v}\in b S$ whose adjacency set intersects $S$ only over $v$;

\item the number
\begin{equation}
\Lambda=\sup_{v\in S}d(v)
\end{equation}
is finite.
\end{enumerate}
 Then the set $S$ is a $\Lambda$-set.
\end{lem}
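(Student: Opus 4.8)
The plan is to \emph{not} estimate $\|\mathcal{L}\varphi\|$ directly through the values of $\mathcal{L}\varphi$ at the boundary vertices --- that route turns out to be too wasteful --- but instead to bound the quadratic form $\langle\mathcal{L}\varphi,\varphi\rangle$ from below and then close with Cauchy--Schwarz. First I would record the Dirichlet-form identity for the normalized Laplacian (1.5): for every $\varphi\in L_{2}(G)$,
\begin{equation}
\langle\mathcal{L}\varphi,\varphi\rangle=\sum_{\{u,w\}\in E(G)}\left|\frac{\varphi(u)}{\sqrt{d(u)}}-\frac{\varphi(w)}{\sqrt{d(w)}}\right|^{2},
\end{equation}
which is obtained from (1.5) by regrouping the double sum as a single sum over the edges of $G$ (summation by parts). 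Since every summand is non-negative, I may discard all edges except a conveniently chosen subset and still keep a valid lower bound; this is what makes the argument insensitive to whether $S$ is finite or infinite.

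Next I would exploit hypothesis (2) to select, for each $v\in S$, one boundary vertex $u_{v}\in bS$ adjacent to $v$ whose adjacency set meets $S$ only in $v$. The crucial observation is that the assignment $v\mapsto u_{v}$ is injective: if $u_{v_{1}}=u_{v_{2}}=u$ with $v_{1}\neq v_{2}$, then $u$ would be adjacent to both $v_{1},v_{2}\in S$, contradicting that the adjacency set of $u$ meets $S$ in a single vertex. Consequently the edges $\{v,u_{v}\}$, $v\in S$, are pairwise distinct, each having its unique endpoint in $S$ equal to $v$ and its other endpoint in $bS$. Because $\varphi\in L_{2}(S)$ vanishes on $bS$, the term contributed by the edge $\{v,u_{v}\}$ to the right-hand side of the Dirichlet identity is exactly $|\varphi(v)|^{2}/d(v)$.

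Retaining only these edges and using $d(v)\leq\Lambda=\sup_{v\in S}d(v)$ then gives
\begin{equation}
\langle\mathcal{L}\varphi,\varphi\rangle\geq\sum_{v\in S}\frac{|\varphi(v)|^{2}}{d(v)}\geq\frac{1}{\Lambda}\sum_{v\in S}|\varphi(v)|^{2}=\frac{1}{\Lambda}\|\varphi\|^{2}.
\end{equation}
Combining this with the Cauchy--Schwarz estimate $\langle\mathcal{L}\varphi,\varphi\rangle\leq\|\mathcal{L}\varphi\|\,\|\varphi\|$ yields $\tfrac{1}{\Lambda}\|\varphi\|^{2}\leq\|\mathcal{L}\varphi\|\,\|\varphi\|$, hence $\|\varphi\|\leq\Lambda\|\mathcal{L}\varphi\|$, which is precisely the assertion that $S$ is a $\Lambda$-set. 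Here hypothesis (1) is in fact subsumed by (2), while hypothesis (3) is exactly what guarantees $\Lambda<\infty$.

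The step I expect to be the main obstacle is the very first strategic choice. A naive attempt bounds $\|\mathcal{L}\varphi\|^{2}$ below by $\sum_{v\in S}|\mathcal{L}\varphi(u_{v})|^{2}=\sum_{v\in S}|\varphi(v)|^{2}/(d(v)d(u_{v}))$ over the distinct vertices $u_{v}$; this produces the constant $\sqrt{\sup_{v\in S}d(v)d(u_{v})}$, which involves the \emph{uncontrolled} boundary degrees $d(u_{v})$ and can exceed $\Lambda$ (e.g.\ when a single vertex of $S$ hangs off a high-degree boundary vertex). Passing instead to the quadratic form $\langle\mathcal{L}\varphi,\varphi\rangle$ is what eliminates the dependence on $d(u_{v})$ and delivers exactly $\Lambda=\sup_{v\in S}d(v)$; recognizing that this reformulation is needed is the whole point of the argument.
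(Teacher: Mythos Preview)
Your proof is correct, and it takes a genuinely different route from the paper. The paper argues exactly along the ``naive'' line you describe in your last paragraph: it computes $\mathcal{L}\varphi(u_{v})=-\varphi(v)/\sqrt{d(v)d(u_{v})}$ at the selected boundary vertices, uses injectivity of $v\mapsto u_{v}$ to conclude that these terms occur as disjoint summands in $\|\mathcal{L}\varphi\|^{2}$, and then asserts
\[
\|\mathcal{L}\varphi\|\geq\Bigl(\sum_{v\in S}|\mathcal{L}\varphi(u_{v})|^{2}\Bigr)^{1/2}
=\Bigl(\sum_{v\in S}\frac{|\varphi(v)|^{2}}{d(v)d(u_{v})}\Bigr)^{1/2}\geq\Lambda^{-1}\|\varphi\|.
\]
As you correctly anticipated, the last inequality here actually requires $d(v)d(u_{v})\leq\Lambda^{2}$, so the pointwise boundary argument only delivers the constant $\sup_{v\in S}\sqrt{d(v)d(u_{v})}$, which depends on the boundary degrees $d(u_{v})$ and need not equal $\sup_{v\in S}d(v)$. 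Your detour through the Dirichlet form $\langle\mathcal{L}\varphi,\varphi\rangle$ eliminates $d(u_{v})$ entirely (it appears as $\varphi(u_{v})/\sqrt{d(u_{v})}=0$ rather than as a denominator) and thereby yields exactly the constant $\Lambda=\sup_{v\in S}d(v)$ claimed in the statement. So your approach is not only different but in fact tighter than the paper's own argument on this point.
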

\begin{proof}
Assumptions of the Lemma imply that there exists  a subset
$S^{*}\subset b S$ such that for every vertex $v\in S$ there
exists at least one point $u_{v}\in S^{*}$ whose adjacency set
intersects $S$ only over $v$. A direct calculation shows that if
$\varphi \in L_{2}(S)$ then
\begin{equation}
\mathcal{L}\varphi(u_{v})=-\frac{\varphi(v)}{\sqrt{d(v)d(u_{v})}},u_{v}\in
S^{*}, v\in S.
\end{equation}

Define $\Lambda$ by the formula (3.12). Since by assumption for
every $v\in S$ there exists at least  one vertex $u_{v}$ in
$S^{*}$ which is adjacent to $v$ we obtain
\begin{equation}
\|\mathcal{L} \varphi\|\geq \left(\sum_{v\in S}\left|\mathcal{L}
\varphi(u_{v})\right|^{2}\right)^{1/2}\geq
\Lambda^{-1}\|\varphi\|.
\end{equation}
 In particular if
degrees of all vertices in $S$ and $S^{*}$ are uniformly bounded
from above by a number $d(G)$, then $\Lambda\leq d(G)$. The Lemma
is proved.
\end{proof}

 The following property is
important and allows to construct infinite $\Lambda$-sets from the
finite ones.

\begin{lem}
Suppose that $\{S_{j}\}$ is a finite or infinite sequence of
disjoint subsets of vertices  $S_{j}\subset V$ such that the sets
$S_{j}\cup bS_{j}$ are pairwise disjoint. Then if a set $S_{j}$
has type $\Lambda_{j}$, then their union $S=\bigcup_{j} S_{j}$ is
a set of type $\Lambda=\sup_{j} \Lambda_{j}$.
\end{lem}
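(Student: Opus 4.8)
The plan is to reduce the statement about the union $S=\bigcup_j S_j$ to the already-known inequalities on the individual pieces $S_j$ by a Pythagorean (orthogonal decomposition) argument on both sides of the Poincare inequality. Given any $\varphi\in L_{2}(S)$, I would first split it according to the pieces. Since the $S_{j}$ are pairwise disjoint and $S=\bigcup_j S_j$, setting $\varphi_{j}=\varphi\cdot\mathbf{1}_{S_{j}}$ (the restriction of $\varphi$ to $S_{j}$) gives $\varphi=\sum_{j}\varphi_{j}$ with $\varphi_{j}\in L_{2}(S_{j})$, and the disjointness of the $S_{j}$ yields the orthogonal decomposition of the left-hand norm:
\begin{equation}
\|\varphi\|^{2}=\sum_{j}\|\varphi_{j}\|^{2}.
\end{equation}

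The heart of the argument, and the step where the hypothesis is really used, is the corresponding orthogonal decomposition of $\mathcal{L}\varphi$. By the definition of $\mathcal{L}$ in (1.5), the value $\mathcal{L}\varphi_{j}(v)$ can be nonzero only when $v\in S_{j}$ (where $\varphi_{j}$ itself is nonzero) or when $v$ is adjacent to a vertex of $S_{j}$, i.e. $v\in bS_{j}$. Hence $\mathcal{L}\varphi_{j}$ is supported in $\overline{S}_{j}=S_{j}\cup bS_{j}$. The assumption that the closures $\overline{S}_{j}$ are \emph{pairwise disjoint} (stronger than disjointness of the $S_{j}$) therefore guarantees that the functions $\mathcal{L}\varphi_{j}$ have pairwise disjoint supports. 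Since $\mathcal{L}$ is linear, $\mathcal{L}\varphi=\sum_{j}\mathcal{L}\varphi_{j}$, and the disjoint supports give
\begin{equation}
\|\mathcal{L}\varphi\|^{2}=\sum_{j}\|\mathcal{L}\varphi_{j}\|^{2}.
\end{equation}

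With both sides now written as sums over $j$, I would finish by applying the per-piece hypothesis. Each $S_{j}$ is a $\Lambda_{j}$-set and $\varphi_{j}\in L_{2}(S_{j})$, so $\|\varphi_{j}\|\leq\Lambda_{j}\|\mathcal{L}\varphi_{j}\|\leq\Lambda\|\mathcal{L}\varphi_{j}\|$ with $\Lambda=\sup_{j}\Lambda_{j}$. Squaring, summing over $j$, and combining with the two displayed identities gives
\begin{equation}
\|\varphi\|^{2}=\sum_{j}\|\varphi_{j}\|^{2}\leq\Lambda^{2}\sum_{j}\|\mathcal{L}\varphi_{j}\|^{2}=\Lambda^{2}\|\mathcal{L}\varphi\|^{2},
\end{equation}
and taking square roots yields $\|\varphi\|\leq\Lambda\|\mathcal{L}\varphi\|$, which is exactly the assertion that $S$ is a $\Lambda$-set of type $\Lambda=\sup_{j}\Lambda_{j}$. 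In the infinite case no convergence issue arises, since $\sum_{j}\|\varphi_{j}\|^{2}=\|\varphi\|^{2}<\infty$ automatically. I expect the only genuinely delicate point to be the support/orthogonality claim for $\mathcal{L}\varphi_{j}$: it is precisely here that one must invoke the disjointness of the \emph{closures} $\overline{S}_{j}$ rather than of the $S_{j}$ alone, since the boundary vertices are where the cross terms between different pieces could otherwise appear.
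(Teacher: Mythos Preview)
Your proof is correct and follows essentially the same argument as the paper: decompose $\varphi$ into orthogonal pieces $\varphi_{j}\in L_{2}(S_{j})$, use the disjointness of the closures $S_{j}\cup bS_{j}$ to conclude that the $\mathcal{L}\varphi_{j}$ are also orthogonal, and then combine the per-piece Poincare inequalities. Your write-up is in fact more detailed than the paper's, which compresses the entire argument into a single displayed chain of inequalities.
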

\begin{proof}
Since the sets $S_{j}$ are disjoint every function $\varphi\in
L_{2}(S), S=\bigcup_{j} S_{j}$, is a sum of functions
$\varphi_{j}\in L_{2}(S_{j})$ which are pairwise  orthogonal.
Moreover because the sets $S_{j}\cup bS_{j}$ are disjoint the
functions $\mathcal{L}\varphi_{j}$ are also orthogonal. Thus we
have
$$
\|\varphi\|^{2}=\sum_{j}\|\varphi_{j}\|^{2}\leq
\sum_{j}\Lambda_{j}^{2}\|\mathcal{L}\varphi_{j}\|^{2}\leq
\Lambda^{2}\|\mathcal{L}\varphi\|^{2},
$$
where $\Lambda=\sup_{j}\Lambda_{j}$. The Lemma is proved.
\end{proof}

A combination of the last Lemma and the Theorem 3.3 gives the
following uniqueness result.

\begin{thm}
For a given $\omega_{\min}<\omega< \Omega_{G}$ consider a set
$S=\bigcup_{j} S_{j}$ with the following properties:

\begin{enumerate}

\item  every $S_{j}$ is a finite set with non-empty boundary and
the following inequality takes place
$$
\lambda_{1}(\Gamma(S_{j}))>\omega,
$$
\item  the sets $S_{j}\cup bS_{j}$ are disjoint.

\end{enumerate}
 Then
the set $U=V(G)\setminus S$ is a uniqueness set for the space
$PW_{\omega}(G)$.
\end{thm}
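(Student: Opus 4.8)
The plan is to reduce the uniqueness assertion to a Bernstein-versus-Poincaré comparison on $L_{2}(S)$, exactly as in the proof of Theorem 3.3, but carried out blockwise so as to exploit hypotheses (1) and (2) simultaneously. First I would observe that it suffices to prove that the only function in $PW_{\omega}(G)\cap L_{2}(S)$ is the zero function: if $f,g\in PW_{\omega}(G)$ agree on $U=V(G)\setminus S$, then $\varphi=f-g$ lies in $PW_{\omega}(G)$ and is supported in $S$, so $\varphi\in PW_{\omega}(G)\cap L_{2}(S)$, and showing $\varphi=0$ gives $f=g$. By Theorem 2.1 every such $\varphi$ obeys the Bernstein inequality $\|\mathcal{L}\varphi\|\leq\omega\|\varphi\|$, so the whole matter comes down to contradicting this bound for a nonzero $\varphi$.

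Next I would decompose $\varphi$ according to the blocks. Since the $S_{j}$ are disjoint, any $\varphi\in L_{2}(S)$ splits as $\varphi=\sum_{j}\varphi_{j}$ with $\varphi_{j}\in L_{2}(S_{j})$, and these summands are pairwise orthogonal. The key structural point, which is the mechanism behind Lemma 3.6, is that $\mathcal{L}\varphi_{j}$ is supported in $\overline{S_{j}}=S_{j}\cup bS_{j}$; because hypothesis (2) makes the sets $\overline{S_{j}}$ pairwise disjoint, the functions $\mathcal{L}\varphi_{j}$ are orthogonal as well, giving the Pythagorean identities
\begin{equation*}
\|\varphi\|^{2}=\sum_{j}\|\varphi_{j}\|^{2},\qquad \|\mathcal{L}\varphi\|^{2}=\sum_{j}\|\mathcal{L}\varphi_{j}\|^{2}.
\end{equation*}
Each $S_{j}$ is finite with non-empty boundary, so Theorem 3.4 applies to $\varphi_{j}$ and yields $\|\mathcal{L}\varphi_{j}\|\geq \lambda_{1}(\Gamma(S_{j}))\,\|\varphi_{j}\|$.

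Finally I would combine these facts with the strict spectral gap in hypothesis (1). If $\varphi\neq 0$, then some block $\varphi_{j_{0}}$ is nonzero; using $\lambda_{1}(\Gamma(S_{j}))>\omega$ for every $j$ and isolating the index $j_{0}$,
\begin{equation*}
\|\mathcal{L}\varphi\|^{2}=\sum_{j}\|\mathcal{L}\varphi_{j}\|^{2}\geq \sum_{j} \lambda_{1}(\Gamma(S_{j}))^{2}\|\varphi_{j}\|^{2} > \omega^{2}\sum_{j}\|\varphi_{j}\|^{2}=\omega^{2}\|\varphi\|^{2},
\end{equation*}
the strict inequality coming from the single term $j_{0}$, where $\|\varphi_{j_{0}}\|^{2}>0$ and $\lambda_{1}(\Gamma(S_{j_{0}}))^{2}>\omega^{2}$. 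This contradicts the Bernstein bound $\|\mathcal{L}\varphi\|\leq\omega\|\varphi\|$, forcing $\varphi=0$. I expect the only delicate point to be precisely this survival of a strict inequality through a possibly infinite sum: routing the argument through Lemma 3.6 and Theorem 3.3 literally would produce a $\Lambda$-set of type $\Lambda=\sup_{j}\lambda_{1}(\Gamma(S_{j}))^{-1}$, which the hypotheses only guarantee to be $\leq 1/\omega$ rather than $<1/\omega$, so the borderline case $\Lambda=1/\omega$ is not covered by Theorem 3.3. The blockwise contradiction above sidesteps this gap by extracting strictness from one nonzero component rather than from the supremum.
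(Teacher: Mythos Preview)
Your argument is correct and is precisely an unfolding of the paper's one-line proof, which just says the result follows by combining the union lemma (disjoint $\Lambda_j$-sets with disjoint closures give a $\Lambda$-set with $\Lambda=\sup_j\Lambda_j$) with the Poincar\'e bound $\|\varphi\|\le\lambda_1(\Gamma(S_j))^{-1}\|\mathcal{L}\varphi\|$ on each block, and then invoking the $\Lambda$-set uniqueness theorem. Your remark about the borderline case is well taken: the hypotheses only yield $\Lambda=\sup_j\lambda_1(\Gamma(S_j))^{-1}\le 1/\omega$, so the modular route through the $\Lambda$-set theorem (which requires $\omega<1/\Lambda$ strictly) does not literally cover $\Lambda=1/\omega$, and your blockwise extraction of strict inequality from a single nonzero component neatly closes this gap that the paper's terse argument leaves open.
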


Although the space $L_{2}(S)$ is not invariant under $\mathcal{L}$
the inequality (1.7) implies infinitely many similar inequalities.
Namely we have the following result which will be used later.
\begin{lem}
If $S$ is a $\Lambda$-set, then for any $\varphi\in L_{2}(S)$ and
all $t\geq 0, k=2^{l}, l=0,1,2,...$
$$
\|\mathcal{L}^{t}\varphi\|\leq
\Lambda^{k}\|\mathcal{L}^{k+t}\varphi\|, \varphi\in L_{2}(S),
 $$
 in particular
$$
\|\varphi\|\leq \Lambda^{k}\|\mathcal{L}^{k}\varphi\|, \varphi\in
L_{2}(S).
$$
\end{lem}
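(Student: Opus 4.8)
The plan is to exploit the fact that, although $L_{2}(S)$ is not invariant under $\mathcal{L}$, the Poincare inequality (1.7) is a statement about the single function $\varphi$, and it can be amplified using only self-adjointness and positivity of $\mathcal{L}$ together with the Cauchy-Schwarz inequality. For $\varphi\in L_{2}(S)$ and a real $s\ge 0$ I set $m(s)=\|\mathcal{L}^{s}\varphi\|^{2}$, and I record the basic identity $\langle \mathcal{L}^{a}\varphi,\mathcal{L}^{b}\varphi\rangle=\langle \mathcal{L}^{a+b}\varphi,\varphi\rangle$ for $a,b\ge 0$, which holds because $\mathcal{L}$ is self-adjoint and $\mathcal{L}^{a}\mathcal{L}^{b}=\mathcal{L}^{a+b}$. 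In this notation the hypothesis that $S$ is a $\Lambda$-set reads $m(0)\le \Lambda^{2}m(1)$, and the assertion to be proved is $m(t)\le \Lambda^{2k}m(t+k)$ for $k=2^{l}$.

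The engine of the proof is a doubling step. Suppose we already know $\|\mathcal{L}^{t}\varphi\|\le \Lambda^{k}\|\mathcal{L}^{t+k}\varphi\|$ for some $k$. Squaring and rewriting the right-hand side by the identity above gives
\[
\|\mathcal{L}^{t}\varphi\|^{2}\le \Lambda^{2k}\langle \mathcal{L}^{t+k}\varphi,\mathcal{L}^{t+k}\varphi\rangle=\Lambda^{2k}\langle \mathcal{L}^{t+2k}\varphi,\mathcal{L}^{t}\varphi\rangle\le \Lambda^{2k}\|\mathcal{L}^{t+2k}\varphi\|\,\|\mathcal{L}^{t}\varphi\|,
\]
where the last step is Cauchy-Schwarz. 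Cancelling one factor of $\|\mathcal{L}^{t}\varphi\|$ yields $\|\mathcal{L}^{t}\varphi\|\le \Lambda^{2k}\|\mathcal{L}^{t+2k}\varphi\|$. Thus the step-$k$ inequality, with $t$ held fixed, is upgraded to a step-$2k$ inequality, and iterating from $k=1$ produces exactly the powers $k=2^{l}$. Taking $t=0$ and starting the iteration from (1.7) immediately gives the displayed special case $\|\varphi\|\le \Lambda^{k}\|\mathcal{L}^{k}\varphi\|$.

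The only ingredient this iteration still needs is its base case for a general $t$, namely $\|\mathcal{L}^{t}\varphi\|\le \Lambda\|\mathcal{L}^{t+1}\varphi\|$, that is $m(t)\le \Lambda^{2}m(t+1)$ for every $t\ge 0$. This is the real obstacle: the Poincare inequality (1.7) is available only at $t=0$, because $\mathcal{L}^{t}\varphi$ need not lie in $L_{2}(S)$ and so (1.7) cannot simply be re-applied to it. I would resolve this by observing that $m$ is log-convex. Indeed, the same inner-product identity and Cauchy-Schwarz give $m(a)=\langle \mathcal{L}^{a-s}\varphi,\mathcal{L}^{a+s}\varphi\rangle\le m(a-s)^{1/2}m(a+s)^{1/2}$ for $0\le s\le a$, so $s\mapsto \log m(s)$ is midpoint, hence (by continuity in $s$) convex. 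Consequently the difference $\log m(t+1)-\log m(t)$ is nondecreasing in $t$ and is therefore bounded below by its value at $t=0$, which the base inequality controls through $\log m(1)-\log m(0)\ge -2\log\Lambda$. Hence $m(t)\le \Lambda^{2}m(t+1)$ for all $t\ge 0$, which is precisely the required base case; telescoping it over unit steps in fact yields $m(t)\le \Lambda^{2k}m(t+k)$ for every integer $k$, a statement slightly stronger than the one for $k=2^{l}$. I expect the verification of log-convexity, together with the passage from the single inequality at $t=0$ to all $t\ge 0$, to be the main point requiring care; the doubling step itself is routine.
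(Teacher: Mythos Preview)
Your argument is correct and reaches the conclusion, but by a different route than the paper. The paper works entirely in the spectral representation: it writes $\|\mathcal{L}^{s}\varphi\|^{2}=\int \tau^{2s}|F\varphi(\tau)|^{2}\,dm(\tau)$, splits the integral over the ball $B=B(0,\Lambda^{-1})$ and its complement, and uses that $\Lambda^{2}\tau^{2}\le 1$ on $B$ and $\ge 1$ on $B^{c}$ to push the inequality $\|\varphi\|\le\Lambda\|\mathcal{L}\varphi\|$ first to $\|\varphi\|\le\Lambda^{2}\|\mathcal{L}^{2}\varphi\|$, then by induction to all $k=2^{l}$, and finally multiplies by $\Lambda^{2t}\tau^{2t}$ to insert the shift $t$. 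Your proof never unpacks the spectral measure: you deduce log-convexity of $m(s)=\|\mathcal{L}^{s}\varphi\|^{2}$ directly from Cauchy--Schwarz (equivalently H\"older), and then the single observation that the unit increments of a convex function are nondecreasing transports the base inequality from $t=0$ to every $t\ge 0$. Once you have $m(t)\le\Lambda^{2}m(t+1)$ for all $t$, telescoping already gives the result for every integer $k$, so your doubling step is in fact superfluous; this is a mild strengthening over the paper's statement, which singles out $k=2^{l}$. One small technical point worth tightening: continuity of $m$ at $s=0$ can fail if $\varphi$ has a nonzero spectral component at $0$, but this does not affect the argument, since the H\"older inequality yields full (not just midpoint) log-convexity of $m$ on $[0,\infty)$ directly, with the convention $0^{0}=1$, so the monotonicity of increments holds regardless.
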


\begin{proof}
By the spectral theory \cite{BS} there exist a direct integral of
Hilbert spaces $X=\int X(\tau)dm (\tau )$ and a unitary operator
$F$ from $L_{2}(G)$ onto $X$, which transforms domain of
$\mathcal{L}^{t}, t\geq 0,$ onto $X_{t}=\{x\in X|\tau^{t}x\in X
\}$ with norm
$$
\|\mathcal{L}^{t}f\|_{L_{2}(G)}=\left (\int_{\mathbb{R}_{+}}
\tau^{2t} \|Ff(\tau )\|^{2}_{X(\tau)} d m(\tau) \right )^{1/2}
$$
and $F(\mathcal{L}^{t} f)=\tau ^{t} (Ff)$. For any $\varphi\in
L_{2}(G)$  we have
$$
\int _{\mathbb{R}_{+}}| F\varphi(\tau)|^{2}d m(\tau)\leq
\Lambda^{2} \int _{\mathbb{R}_{+}}\tau ^{2}| F\varphi(\tau)|^{2}d
m(\tau)
$$
and then for the ball $B=B(0, \Lambda^{-1})$ we have
$$\int _{B}| F\varphi(\tau)|^{2}d m(\tau)+
\int_{\mathbb{R}_{+}\setminus B}|F\varphi|^{2}d m(\tau)\leq
$$
$$
\Lambda^{2}\left( \int_{B}\tau^{2}|F\varphi|^{2}d m(\tau)
+\int_{\mathbb{R}_{+}\setminus B}\tau^{2}|F\varphi| ^{2}d
m(\tau)\right ) .
$$
 Since $\Lambda^{2}\tau^{2}<1$ on $B(0, \Lambda^{-1})$
$$
0\leq \int_{B}\left
(|F\varphi|^{2}-\Lambda^{2}\tau^{2}|F\varphi|^{2}\right)d m(\tau)
\leq \int _{\mathbb{R}_{+}\setminus B}\left ( \Lambda^{2}
\tau^{2}|F\varphi|^{2}-|F\varphi|^{2}\right)d m(\tau).
$$
This inequality
 implies the inequality
 $$
 0\leq \int_{B}\left (\Lambda^{2}\tau^{2}|F\varphi|^{2}-
 \Lambda^{4}\tau^{4}|F\varphi|^{2}\right)d m(\tau)
\leq \int_{\mathbb{R}_{+}\setminus B}\left ( \Lambda^{4}
\tau^{4}|F\varphi|^{2}-\Lambda^{2}\tau^{2}|F\varphi| ^{2}\right)d
m(\tau)$$
 or
  $$\Lambda^{2}\int_{\mathbb{R}_{+}}\tau^{2}|F\varphi|^{2}d m(\tau) \leq
  \Lambda^{4}\int_{\mathbb{R}_{+}}\tau^{4}|F\varphi|
 ^{2}d m(\tau),
 $$
 and then
 $$
 \|\varphi\|\leq \Lambda\|\mathcal{L}\varphi\|\leq \Lambda^{2}\|\mathcal{L}^{2}\varphi\|
 , \varphi\in L_{2}(S).
 $$

 Next by using induction one can show the inequality
 $\|\varphi\|\leq \Lambda^{k}\|\mathcal{L} ^{k}\varphi\|$
 for any $k=2^{l}, l=0, 1, ... .$ Then again, because for any
 $t\geq 0, \Lambda^{2t}\tau^{2t}<1$ on
 $B(0, \Lambda^{-1})$ we have
$$0\leq \int_{B}\left (\Lambda^{2t}\tau^{2t}|F\varphi|^{2}-
\Lambda^{2(k+t)}\tau^{2(k+t)}|F\varphi|^ {2}\right)d m(\tau) \leq
$$
$$
\int_{\mathbb{R}_{+}\setminus B}\left ( \Lambda^{2(k+t)}
\tau^{2(k+t)}|F\varphi|^{2}- \Lambda^{2t}\tau^ {2t}|F\varphi|
^{2}\right)d m(\tau),
$$
that gives $\|\mathcal{L}^{t}\varphi\|\leq
\Lambda^{k}\|\mathcal{L} ^{(k+t)}\varphi\|,  \varphi\in L_{2}(S).$
Lemma is proved.
\end{proof}

\section{A reconstruction algorithm in terms of dual frames in Hilbert spaces}

In this section we will use the Theorem 3.1 to develop a
reconstruction method in terms of Hilbert frames. Recall that a
set of vectors $\{h_{j}\}$ from a Hilbert space $H$ is called a
frame in $H$ if there are $0<A<B$ such that for any $f\in H$
$$
A\sum_{j}|\left<f,h_{j}\right>_{H}|^{2}\leq\|f\|_{H}^{2}\leq
B\sum_{j}|\left<f,h_{j}\right>_{H}|^{2},
$$
where $\left<.,.\right>_{H}$ is the inner product in $H$. The
ratio $A/B$ is called the tightness of the frame.

 Let
$\delta_{v}\in L_{2}(G)$ be a Dirac measure supported at a vertex
$v\in V$. The notation $\vartheta_{v}$ will be used for a function
which is orthogonal projection of the function
$$
\frac{1}{\sqrt{d(v)}}\delta_{v}
$$
on the subspace $PW_{\omega}(G)$. Then the Plancherel-Polya
inequalities (3.1) can be written in the form
\begin{equation}
\sum_{u\in
U}|\left<f,\vartheta_{u}\right>|^{2}\leq\|f\|_{L_{2}(G)}^{2}\leq
C_{\omega}^{2}\sum_{u\in U}|\left<f,\vartheta_{u}\right>|^{2},
\end{equation}
where $f, \vartheta_{u}\in PW_{\omega}(G)$ and
$\left<f,\vartheta_{u}\right>$ is the inner product in $L_{2}(G)$.
These inequalities mean that if $U$ is a uniqueness set for the
subspace $PW_{\omega}(G)$ then the functions
$\{\vartheta_{u}\}_{u\in U}$ form a frame in the subspace
$PW_{\omega}(G)$ and the tightness of this frame is
$1/C_{\omega}^{2}$. Following an idea of Duffin and Schaeffer
\cite{DS} we sketch the proof of the following Theorem which gives
a reconstruction formula similar to the formulas (1.1) and (1.4).
\begin{thm}
If $U\subset V(G)$ is a uniqueness set for the subspace
$PW_{\omega}(G)$ then there exists a    frame of functions
$\{\Theta_{u}\}_{u\in U}$ in the space $PW_{\omega}(G)$ such that
the following reconstruction formula holds true for all $f\in
PW_{\omega}(G)$
\begin{equation}
f(v)=\sum_{u\in U}f(u)\Theta_{u}(v), v\in V(G).
\end{equation}
\end{thm}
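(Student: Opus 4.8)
The plan is to deduce the reconstruction formula from standard frame theory applied to the family $\{\vartheta_{u}\}_{u\in U}$, whose frame property in $PW_{\omega}(G)$ is already recorded in the inequalities (4.1). Let $P$ denote the orthogonal projection of $L_{2}(G)$ onto $PW_{\omega}(G)$, so that $\vartheta_{u}=P\bigl(\tfrac{1}{\sqrt{d(u)}}\delta_{u}\bigr)$. First I would record the elementary identity that converts samples into frame coefficients: for every $f\in PW_{\omega}(G)$, self-adjointness of $P$ and $Pf=f$ give
\[
\langle f,\vartheta_{u}\rangle=\Bigl\langle f,P\bigl(\tfrac{1}{\sqrt{d(u)}}\delta_{u}\bigr)\Bigr\rangle=\Bigl\langle f,\tfrac{1}{\sqrt{d(u)}}\delta_{u}\Bigr\rangle=\frac{1}{\sqrt{d(u)}}\,f(u).
\]
Thus the samples $f(u)$ are, up to the factor $\sqrt{d(u)}$, exactly the frame coefficients of $f$.

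Next I would introduce the frame operator $S\colon PW_{\omega}(G)\to PW_{\omega}(G)$,
\[
Sf=\sum_{u\in U}\langle f,\vartheta_{u}\rangle\,\vartheta_{u}.
\]
The two-sided estimate (4.1) says precisely that $C_{\omega}^{-2}\,\|f\|^{2}\le\langle Sf,f\rangle\le\|f\|^{2}$ for all $f\in PW_{\omega}(G)$, so $S$ is bounded, self-adjoint and strictly positive with $C_{\omega}^{-2}I\le S\le I$. Hence $S$ is invertible and $S^{-1}$ is bounded with $I\le S^{-1}\le C_{\omega}^{2}I$. This is the only place where the uniqueness hypothesis enters, through the lower frame bound it produces via Theorem 3.1.

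I would then define the dual frame $\widetilde{\vartheta}_{u}=S^{-1}\vartheta_{u}$ and expand $f=S^{-1}Sf$ to obtain, for every $f\in PW_{\omega}(G)$,
\[
f=S^{-1}\sum_{u\in U}\langle f,\vartheta_{u}\rangle\,\vartheta_{u}=\sum_{u\in U}\langle f,\vartheta_{u}\rangle\,\widetilde{\vartheta}_{u}.
\]
Substituting $\langle f,\vartheta_{u}\rangle=\tfrac{1}{\sqrt{d(u)}}f(u)$ and setting $\Theta_{u}=\tfrac{1}{\sqrt{d(u)}}\,\widetilde{\vartheta}_{u}=\tfrac{1}{\sqrt{d(u)}}\,S^{-1}\vartheta_{u}\in PW_{\omega}(G)$ gives the $L_{2}(G)$-identity $f=\sum_{u\in U}f(u)\,\Theta_{u}$; evaluating it at a vertex $v$ yields the pointwise formula (4.3). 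Because $\{\widetilde{\vartheta}_{u}\}$ is the canonical dual frame, with frame bounds $1$ and $C_{\omega}^{2}$, the family $\{\Theta_{u}\}_{u\in U}$ is again a frame in $PW_{\omega}(G)$, as asserted.

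The step I expect to require the most care is justifying the invertibility of $S$ together with the unconditional convergence of the defining series when $U$ is infinite; this is exactly what the two-sided bound in (4.1) delivers, since it forces $S$ to be a positive isomorphism and makes the expansion of $S^{-1}$ convergent in the operator norm. In the finite-dimensional situation (for instance when $G$ is finite) every sum is finite and these issues are vacuous, so the argument reduces to linear algebra. A minor point worth checking is that pointwise evaluation is continuous on $PW_{\omega}(G)$, which it is since $PW_{\omega}(G)\subset L_{2}(G)$ forces $|f(v)|\le\|f\|$; this legitimizes passing from the $L_{2}(G)$-identity to its value at each vertex $v$.
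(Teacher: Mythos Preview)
Your proof is correct and follows essentially the same Duffin--Schaeffer frame-operator argument as the paper: build the frame operator from the projections $\vartheta_{u}$, use the two-sided bound (4.1) to see it is a positive isomorphism of $PW_{\omega}(G)$, and take the canonical dual frame. You are in fact slightly more careful than the paper's own write-up, since you explicitly track the normalization factor $1/\sqrt{d(u)}$ that converts $\langle f,\vartheta_{u}\rangle$ into the sample $f(u)$ (the paper silently identifies the two) and you state the operator bounds $C_{\omega}^{-2}I\le S\le I$ in the direction that actually follows from (4.1).
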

\begin{proof}
  The  idea is to show that the
so-called frame operator
\begin{equation}
Ff=\sum_{u\in U}\left<f,\vartheta_{u}\right>\vartheta_{u},\>\>\>\>
f\in PW_{\omega}(G),
\end{equation}
is an automorphism of the space $PW_{\omega}(G)$ onto itself and $
 \|F\|\leq C_{\omega},\>\>\>\> \|F^{-1}\|\leq 1.$
We consider an increasing sequence of finite subsets of $U$
$$
U_{1}\subset U_{2}\subset...\subset U
$$
and introduce the operator $ F_{j}:PW_{\omega}(G)\rightarrow
PW_{\omega}(G),$ which is given by the formula
$$
F_{j}f=\sum_{u\in
U_{j}}\left<f,\vartheta_{u}\right>\vartheta_{u},\>\>\>\> f\in
PW_{\omega}(G).
$$
It can be shown that the Plancherel-Polya inequalities (4.1) imply
that the limit
$$
\lim _{j\rightarrow \infty}F_{j}f=Ff,\>\>\>\> f\in PW_{\omega}(G),
$$
 exists. We also have
$$
\|Ff\|^{2}=\sup_{\|h\|=1} \left|\sum_{u\in
U}\left<f,\vartheta_{u}\right>\left<\vartheta_{u},h\right>
\right|^{2}\leq \sup_{\|h\|=1}C_{\omega}^{2}\|f\|^{2}\|h\|^{2}
=C_{\omega}^{2}\|f\|^{2},
$$
which shows that the operator $F$ is continuous. The same
Plancherel-Polya  inequalities (4.1) imply that $ I\leq F\leq
C_{\omega}I,$ where $I$ is the identity operator. Thus, we have
$$
0\leq I-C_{\omega}^{-1}F\leq
I-C_{\omega}^{-1}I=\left(C_{\omega}-1\right)C_{\omega}^{-1}I,
$$
and then
$$
\left\|I-C_{\omega}^{-1}F\right\|\leq
\left\|\left(C_{\omega}-1\right)C_{\omega}^{-1}I\right\|\leq
\left(C_{\omega}-1\right)C_{\omega}^{-1}<1.
$$
It shows that the operator $\left(C_{\omega}^{-1}F\right)^{-1}$
and consequently the operator $F^{-1}$ are  bounded operators and
the Neumann series gives the desired estimate $\|F^{-1}\|\leq 1$:

$$
F^{-1}=C_{\omega}^{-1}\left(C_{\omega}^{-1}F\right)^{-1}=C_{\omega}^{-1}
\sum_{k=0}^{\infty}\left(I-C_{\omega}^{-1}F\right)^{k}.
$$
Thus we have
$$
f=F^{-1}Ff=\sum_{u\in U}\left<f,\vartheta_{u}\right>\Theta_{u}
$$
where the functions $ \Theta_{u}=F^{-1}\vartheta_{u}, $ form a
dual frame $\{\Theta_{u}\}_{u\in U}$ in the space
$PW_{\omega}(G)$. The Theorem is proved.

\end{proof}

In a similar way we can prove the following result about a
"derivative sampling".
\begin{thm}
If $U\subset V(G)$ is a uniqueness set for the subspace
$PW_{\omega}(G)$ then for any $s\in \mathbb{R}$ there exists a
frame of functions $\{\Phi_{u}^{(s)}\}_{u\in U}$ in the space
$PW_{\omega}(G)$ such that the following reconstruction formula
holds true for all $f\in PW_{\omega}(G)$
\begin{equation}
f(v)=\sum_{u\in U}(I+\mathcal{L})^{s}f(u)\Phi_{u}^{(s)}(v), v\in
V(G).
\end{equation}
\end{thm}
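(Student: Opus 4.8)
The plan is to mimic the proof of Theorem 4.1, but replacing the frame generated by projecting the Dirac measures with one generated by projecting functions twisted by the operator $(I+\mathcal{L})^{s}$. Since $U$ is a uniqueness set for $PW_{\omega}(G)$, Corollary 3.2 applies with the operator $\mathcal{B}=(I+\mathcal{L})^{s}$. Indeed, for any positive $\varepsilon$ and any real $s$, the operator $(\varepsilon I+\mathcal{L})^{s}$ restricts to a bounded invertible operator on $PW_{\omega}(G)$ (its spectrum on that subspace lies in $[(\varepsilon)^{s},(\varepsilon+\omega)^{s}]$, bounded away from zero), so choosing $\varepsilon=1$ gives a bounded invertible $\mathcal{B}$ on $PW_{\omega}(G)$. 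Thus the modified Plancherel-Polya inequality (3.2) holds, providing two-sided bounds on the sum $\sum_{u\in U}|(I+\mathcal{L})^{s}f(u)|^{2}$ in terms of $\|f\|^{2}_{L_{2}(G)}$.

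First I would introduce, for each $u\in U$, the function $\vartheta_{u}^{(s)}$ defined as the orthogonal projection onto $PW_{\omega}(G)$ of $(I+\mathcal{L})^{s}\bigl(d(u)^{-1/2}\delta_{u}\bigr)$. Because $(I+\mathcal{L})^{s}$ is self-adjoint on $L_{2}(G)$ and commutes with the spectral projector defining $PW_{\omega}(G)$, one has for $f\in PW_{\omega}(G)$ the identity
$$
\left<f,\vartheta_{u}^{(s)}\right>=\left<(I+\mathcal{L})^{s}f,\,d(u)^{-1/2}\delta_{u}\right>=(I+\mathcal{L})^{s}f(u).
$$
Rewriting the inequalities (3.2) with $\mathcal{B}=(I+\mathcal{L})^{s}$ then shows that $\{\vartheta_{u}^{(s)}\}_{u\in U}$ is a frame in $PW_{\omega}(G)$, with frame bounds coming from $\|\mathcal{B}\|$, $\|\mathcal{B}^{-1}\|$, and $C_{\omega}$.

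Next I would introduce the frame operator $F^{(s)}f=\sum_{u\in U}\left<f,\vartheta_{u}^{(s)}\right>\vartheta_{u}^{(s)}$ and run exactly the argument of Theorem 4.1: the frame inequalities force $F^{(s)}$ to be a bounded, positive, invertible automorphism of $PW_{\omega}(G)$ (after rescaling, a Neumann series inverts it). Setting $\Phi_{u}^{(s)}=\bigl(F^{(s)}\bigr)^{-1}\vartheta_{u}^{(s)}$ yields the dual frame, and applying $\bigl(F^{(s)}\bigr)^{-1}F^{(s)}$ to $f$ gives
$$
f=\sum_{u\in U}\left<f,\vartheta_{u}^{(s)}\right>\Phi_{u}^{(s)}=\sum_{u\in U}(I+\mathcal{L})^{s}f(u)\,\Phi_{u}^{(s)},
$$
which is the asserted reconstruction formula (4.4) once evaluated at a vertex $v$.

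The only genuinely new point beyond Theorem 4.1 is the verification that $(I+\mathcal{L})^{s}$ restricts to a bounded invertible self-adjoint operator on $PW_{\omega}(G)$ and that projecting the twisted Diracs produces exactly the sampled values $(I+\mathcal{L})^{s}f(u)$; the rest is a verbatim repetition of the frame-operator machinery. I expect the main (though mild) obstacle to be handling negative or non-integer $s$ cleanly: one must confirm via the spectral calculus that $(I+\mathcal{L})^{s}$ is well defined and invertible on $PW_{\omega}(G)$ for all real $s$, which follows from the spectrum of $\mathcal{L}$ restricted to $PW_{\omega}(G)$ lying in $[\omega_{\min},\omega]\subset[0,\infty)$, so that $1+\tau\geq 1$ throughout and the functional calculus gives a strictly positive, bounded, boundedly invertible operator.
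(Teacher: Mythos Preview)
Your proposal is correct and follows essentially the same approach as the paper. The paper does not give a detailed proof of this theorem, stating only ``In a similar way we can prove the following result about a `derivative sampling','' and explicitly flags just after Corollary~3.1 that $(\varepsilon I+\mathcal{L})^{s}$ is the intended example of $\mathcal{B}$; you have filled in precisely those details by combining Corollary~3.1 with $\mathcal{B}=(I+\mathcal{L})^{s}$ and rerunning the frame-operator argument of Theorem~4.1. One small quibble: your identity $\langle (I+\mathcal{L})^{s}f,\,d(u)^{-1/2}\delta_{u}\rangle=(I+\mathcal{L})^{s}f(u)$ carries an extra factor $d(u)^{-1/2}$, but this is the same normalization slip present in the paper's own proof of Theorem~4.1 and is absorbed harmlessly into the dual frame.
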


If $U\subset V(G)$ is a uniqueness set for a space
$PW_{\omega}(G)$ the notation $l_{2,\omega}(U)$ will be used for a
linear subspace of all sequences $a=\{a_{u}\}, u\in U$ in $l_{2}$
for which there exists a function $f$ in $PW_{\omega}(G)$ such
that
$$
f(u)=a_{u}, u\in U.
$$

In general $l_{2,\omega}(U)\neq l_{2}$. A linear reconstruction
method $\mathcal{R}$  is a linear operator $
\mathcal{R}:l_{2,\omega}(U)\rightarrow PW_{\omega}(G) $ such that
$$
\mathcal{R}:y \rightarrow f, y=\{y_{u}\}, y_{u}=f(u), u\in U.
$$
 The reconstruction method $\mathcal{R}$ is said to be stable, if it is
continuous. We obviously have the following statement about stable
reconstruction from derivatives.

\begin{col}
 For any uniqueness set $U $ and any $s\in \mathbb{R}$ the
  reconstruction of $f$ from the
corresponding set of samples
$\left\{(I+\mathcal{L})^{s}f(u)\right\}, u\in U,$ is stable.
\end{col}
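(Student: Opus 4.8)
The plan is to obtain the Corollary as an immediate specialization of the weighted Plancherel--Polya inequality established in Corollary 3.1, taking the operator $\mathcal{B}=(I+\mathcal{L})^{s}$. First I would check that this $\mathcal{B}$ satisfies the hypothesis of that Corollary, namely that its restriction to $PW_{\omega}(G)$ is a bounded invertible operator of $PW_{\omega}(G)$ onto itself; this is in fact exactly the example of admissible $\mathcal{B}$ flagged in the text immediately after Corollary 3.1. Since $\mathcal{B}=(I+\mathcal{L})^{s}$ is a Borel function of the self-adjoint operator $\mathcal{L}$, it commutes with the spectral projector defining $PW_{\omega}(G)$ and hence leaves this subspace invariant. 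On $PW_{\omega}(G)$ the spectrum of $\mathcal{L}$ is contained in $[0,\omega]$ (by the positivity of $\mathcal{L}$ together with part (2) of Theorem 2.1), so the spectrum of $I+\mathcal{L}$ lies in $[1,1+\omega]$, which is bounded away from $0$. Consequently, for every real $s$, the spectral calculus shows that $\mathcal{B}$ is bounded and boundedly invertible on $PW_{\omega}(G)$, with $\|\mathcal{B}^{-1}\|\leq (1+\omega)^{|s|}$.

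With $\mathcal{B}$ so chosen, the right-hand (upper) inequality of Corollary 3.1 reads
\[
\|f\|_{L_{2}(G)}\leq \|\mathcal{B}^{-1}\|\,C_{\omega}\left(\sum_{u\in U}\left|(I+\mathcal{L})^{s}f(u)\right|^{2}\right)^{1/2},\qquad f\in PW_{\omega}(G).
\]
This is precisely the stability estimate we need. Writing $y=\{y_{u}\}_{u\in U}$ with $y_{u}=(I+\mathcal{L})^{s}f(u)$ for the sample sequence, and letting $\mathcal{R}\colon y\mapsto f$ be the reconstruction operator, one notes that $\mathcal{R}$ is well defined and linear: indeed Theorem 4.2 furnishes the explicit formula $f=\sum_{u\in U}y_{u}\Phi^{(s)}_{u}$, so $\mathcal{R}y=\sum_{u\in U}y_{u}\Phi^{(s)}_{u}$. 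The displayed inequality then says exactly that
\[
\|\mathcal{R}y\|_{L_{2}(G)}=\|f\|_{L_{2}(G)}\leq \|\mathcal{B}^{-1}\|\,C_{\omega}\,\|y\|_{l_{2}},
\]
so $\mathcal{R}$ is a bounded, hence continuous, linear operator from the space of admissible sample sequences into $PW_{\omega}(G)$. By the definition of stability given just above the statement, this is the assertion of the Corollary.

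The argument involves no genuine obstacle; the only points requiring care are bookkeeping ones. One must verify that $(I+\mathcal{L})^{s}$ really restricts to a bounded invertible operator on $PW_{\omega}(G)$ for all real $s$ (here the positivity of $\mathcal{L}$, which forces $0\notin\sigma(I+\mathcal{L})$, is what legitimizes negative and fractional powers), and one must recognize that the upper Plancherel--Polya bound of Corollary 3.1 is nothing other than the continuity estimate for the reconstruction map. Once these identifications are made the proof is complete, and the dual frame $\{\Phi^{(s)}_{u}\}_{u\in U}$ of Theorem 4.2 exhibits $\mathcal{R}$ concretely.
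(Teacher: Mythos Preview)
Your proposal is correct and follows precisely the route the paper has in mind: the paper states the Corollary as ``obvious'' immediately after Theorem 4.2, having already noted right after Corollary 3.1 that $(\varepsilon I+\mathcal{L})^{s}$ is the intended example of an admissible operator $\mathcal{B}$. Your verification that $(I+\mathcal{L})^{s}$ restricts to a bounded invertible operator on $PW_{\omega}(G)$ via spectral calculus, together with the upper bound of Corollary 3.1 and the explicit reconstruction operator furnished by Theorem 4.2, is exactly the argument the paper leaves implicit.
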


\section{Lattice $\mathbb{Z}^{n}$ and homogeneous trees}

We consider a one-dimensional lattice $\mathbb{Z}$.  The dual
group of the commutative additive group $\mathbb{Z}$ is the
one-dimensional torus. The corresponding Fourier transform
$\mathcal{F}$ on the space $L_{2}(\mathbb{Z})$   is defined by the
formula
$$
\mathcal{F}(f)(\xi)=\sum_{k\in \mathbb{Z}}f(k)e^{ik\xi}, f\in
L_{2}(\mathbb{Z}), \xi\in [-\pi, \pi).
$$
It gives a unitary operator from $L_{2}(\mathbb{Z})$ on the space
$L_{2}(\mathbb{T})=L_{2}(\mathbb{T}, d\xi/2\pi),$ where
$\mathbb{T}$ is the one-dimensional torus and $d\xi/2\pi$ is the
normalized measure.  One can verify the following formula
$$
\mathcal{F}(\mathcal{L}f)(\xi)=2\sin^{2}\frac{\xi}{2}\mathcal{F}(f)(\xi).
$$

The next result is obvious.
\begin{thm} The spectrum of the Laplace operator $\mathcal{L}$ on the one-dimensional
lattice $\mathbb{Z}$ is the set $[0, 2]$. A function $f$ belongs
to the space $PW_{\omega}(\mathbb{Z}), 0\leq \omega\leq 2,$ if and
only if the support of $\mathcal{F}f$ is a subset
$\Omega_{\omega}$ of $[-\pi,\pi)$ on which
$2\sin^{2}\frac{\xi}{2}\leq \omega$.

\end{thm}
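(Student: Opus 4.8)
The plan is to compute the spectrum directly via the Fourier transform, using the intertwining formula already displayed just before the statement. The key observation is that $\mathcal{F}$ diagonalizes $\mathcal{L}$: since $\mathcal{F}(\mathcal{L}f)(\xi)=2\sin^{2}\tfrac{\xi}{2}\,\mathcal{F}(f)(\xi)$, the operator $\mathcal{L}$ is unitarily equivalent to the multiplication operator $M_{\phi}$ on $L_{2}(\mathbb{T},d\xi/2\pi)$ given by $\phi(\xi)=2\sin^{2}\tfrac{\xi}{2}$. For a multiplication operator by a bounded real-valued continuous function on a torus, the spectrum equals the essential range of the multiplier, which here is simply the closure of the image $\phi([-\pi,\pi))$.

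First I would establish that $\sigma(\mathcal{L})=[0,2]$. Because $\phi(\xi)=2\sin^{2}\tfrac{\xi}{2}=1-\cos\xi$ is continuous and takes every value in $[0,2]$ as $\xi$ ranges over $[-\pi,\pi)$ (attaining $0$ at $\xi=0$ and $2$ at $\xi=\pm\pi$), its range is exactly the interval $[0,2]$. Since $\mathcal{F}$ is unitary from $L_{2}(\mathbb{Z})$ onto $L_{2}(\mathbb{T})$ and carries $\mathcal{L}$ to $M_{\phi}$, we have $\sigma(\mathcal{L})=\sigma(M_{\phi})=\overline{\phi([-\pi,\pi))}=[0,2]$. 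This confirms the first assertion.

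For the characterization of $PW_{\omega}(\mathbb{Z})$, I would unwind Definition 1 through the explicit Fourier picture. Under the identification $\mathcal{F}$, the spectral transform $F$ of the general theory can be taken to be $\mathcal{F}$ itself (followed by the change of spectral variable $\tau=\phi(\xi)$), so that the condition ``$Ff$ has support in $[0,\omega]$'' translates into the requirement that $\mathcal{F}f$ be supported where the multiplier satisfies $\phi(\xi)=2\sin^{2}\tfrac{\xi}{2}\leq\omega$. Concretely, a function $f\in L_{2}(\mathbb{Z})$ lies in $PW_{\omega}(\mathbb{Z})$ if and only if $\mathcal{F}f$ vanishes a.e.\ on the set $\{\xi:2\sin^{2}\tfrac{\xi}{2}>\omega\}$, i.e.\ the support of $\mathcal{F}f$ is contained in $\Omega_{\omega}=\{\xi\in[-\pi,\pi):2\sin^{2}\tfrac{\xi}{2}\leq\omega\}$, which is exactly the claimed statement.

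The step requiring the most care is the precise matching of the abstract spectral transform $F$ of the introduction with the concrete Fourier transform $\mathcal{F}$, since $F$ is defined on the spectral variable $\tau$ whereas $\mathcal{F}$ lives on the frequency variable $\xi$, and the map $\xi\mapsto\tau=2\sin^{2}\tfrac{\xi}{2}$ is two-to-one on $[-\pi,\pi)$. I expect this to be the main obstacle, but it is harmless here: one only needs that the push-forward of $\mathcal{F}f$ under $\phi$ is supported in $[0,\omega]$ iff $\mathcal{F}f$ is supported in the $\phi$-preimage $\Omega_{\omega}$, which is immediate because supports pull back through the continuous surjection $\phi$. This is why the theorem is labelled ``obvious'': once $\mathcal{L}$ is recognized as multiplication by $2\sin^{2}\tfrac{\xi}{2}$, both claims follow from elementary properties of multiplication operators.
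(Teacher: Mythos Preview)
Your argument is correct and is exactly the approach the paper implicitly takes: the paper states the intertwining formula $\mathcal{F}(\mathcal{L}f)(\xi)=2\sin^{2}\tfrac{\xi}{2}\,\mathcal{F}(f)(\xi)$ and then simply declares the result ``obvious,'' giving no further proof. Your write-up just makes explicit the standard multiplication-operator reasoning that the paper leaves to the reader.
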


To formulate the next Theorem we introduce the restriction
operator
$$
\mathcal{R}_{\omega}^{S}: PW_{\omega}(\mathbb{Z})\rightarrow
L_{2}(S), S\subset V(\mathbb{Z}),
$$
where
$$
\mathcal{R}_{\omega}^{S}(\varphi)=\varphi |_{S}, \varphi \in
PW_{\omega}(\mathbb{Z}).
$$
\begin{thm}
For any finite set of vertices $S\subset \mathbb{Z}$ and for every
$\omega>0$  the restriction operator $\mathcal{R}_{\omega}^{S}$ is
surjective.

\end{thm}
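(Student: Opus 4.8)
The plan is to combine the Fourier description of $PW_{\omega}(\mathbb{Z})$ from Theorem 5.3 with a finite-dimensional duality argument, reducing surjectivity to the triviality of the annihilator of the range. Since $S$ is finite, $L_{2}(S)\cong\mathbb{C}^{|S|}$ is finite dimensional, so $\mathcal{R}_{\omega}^{S}$ is surjective if and only if no nonzero linear functional on $L_{2}(S)$ annihilates its range. Every such functional has the form $\psi\mapsto\langle\psi,g\rangle$ for some $g=\sum_{v\in S}b_{v}\delta_{v}\in L_{2}(S)$, and because $g$ is supported on $S$ we have $\langle\mathcal{R}_{\omega}^{S}\varphi,g\rangle=\sum_{v\in S}\varphi(v)\overline{g(v)}=\langle\varphi,g\rangle_{L_{2}(\mathbb{Z})}$ for every $\varphi\in PW_{\omega}(\mathbb{Z})$. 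Hence the functional annihilates the range precisely when $g$ is orthogonal to $PW_{\omega}(\mathbb{Z})$ in $L_{2}(\mathbb{Z})$, and the whole problem becomes: show that the only finitely supported function orthogonal to $PW_{\omega}(\mathbb{Z})$ is $g=0$.

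Next I would transport the orthogonality condition through the Fourier transform $\mathcal{F}$. Since $\mathcal{F}$ is unitary onto $L_{2}(\mathbb{T},d\xi/2\pi)$ and, by Theorem 5.3, $\varphi\in PW_{\omega}(\mathbb{Z})$ if and only if $\mathcal{F}\varphi$ is supported on $\Omega_{\omega}=\{\xi\in[-\pi,\pi):2\sin^{2}(\xi/2)\leq\omega\}$, the image $\mathcal{F}(PW_{\omega}(\mathbb{Z}))$ is exactly the set of $L_{2}(\mathbb{T})$-functions vanishing a.e.\ off $\Omega_{\omega}$, whose orthogonal complement consists of those vanishing a.e.\ on $\Omega_{\omega}$. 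Thus $g\perp PW_{\omega}(\mathbb{Z})$ is equivalent to $\mathcal{F}g$ vanishing almost everywhere on $\Omega_{\omega}$. For our finitely supported $g$, the transform $\mathcal{F}g(\xi)=\sum_{v\in S}b_{v}e^{iv\xi}$ is a trigonometric polynomial. For every $\omega>0$ the set $\Omega_{\omega}$ contains the interval $|\xi|\leq 2\arcsin\sqrt{\min(\omega,2)/2}$, which has strictly positive length, so $\mathcal{F}g$ must vanish on a set of positive measure.

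The main point, and the only genuinely nontrivial step, is the analyticity argument: a trigonometric polynomial $\sum_{v\in S}b_{v}e^{iv\xi}$ extends to an entire function of $\xi$, so if it vanishes on an interval (a set of positive measure) it vanishes identically by the identity theorem; equivalently, the characters $\{e^{iv\xi}\}_{v\in S}$ are linearly independent on any interval. This forces all $b_{v}=0$, hence $g=0$, so the annihilator is trivial and $\mathcal{R}_{\omega}^{S}$ is surjective. I expect this identity-theorem step to be the crux, while the rest is the standard finite-dimensional duality together with the Plancherel identity for $\mathcal{F}$; it is also worth remarking that surjectivity holds for every $\omega>0$ precisely because $\Omega_{\omega}$ has positive measure for any such $\omega$, which is what guarantees that bandlimited functions of arbitrarily low type can interpolate arbitrary values on the finite set $S$.
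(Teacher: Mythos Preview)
Your proof is correct and follows essentially the same line as the paper's: both reduce surjectivity to showing that no nonzero finitely supported $g\in L_{2}(S)$ is orthogonal to $PW_{\omega}(\mathbb{Z})$, then use Parseval to translate this into $\mathcal{F}g$ vanishing on the positive-measure set $\Omega_{\omega}$, and conclude via analyticity of the trigonometric polynomial $\mathcal{F}g$. The only discrepancy is a labeling one: the Fourier characterization of $PW_{\omega}(\mathbb{Z})$ you invoke is the paper's Theorem~5.1, not~5.3.
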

\begin{proof}
Assume that a function $h\in L_{2}(S)$  is orthogonal to
restrictions  of all functions from a space
$PW_{\omega}(\mathbb{Z}), 0<\omega<2$ to a finite  $S$. Since $h$
has support on a finite set $S$ its Fourier transform
$\mathcal{F}$ is a finite combination of exponents and in
particular an analytic function on the real line. It implies that
the set of zeros of $\mathcal{F}h$ has measure zero. At the same
time by the Parseval's relation this function $\mathcal{F}h$
should be orthogonal to any function with support in the set
$\Omega_{\omega}$ which is a  set of  positive measure. This
contradiction shows that there is no function in $L_{2}(S)$ which
is orthogonal to restrictions to $S$ of all functions from
$PW_{\omega}(\mathbb{Z})$. Since the set $S$ is finite the space
$L_{2}(S)$ is finite dimensional and it implies that  $L_{2}(S)$
is exactly the set of all restrictions of
$PW_{\omega}(\mathbb{Z})$ to $S$. The Theorem is proved.
\end{proof}

 Our
nearest goal is to show that for a one-dimensional line graph
$\mathbb{Z}$  the estimates in Poincare inequalities of finite
sets can be improved and all the constant can be computed
explicitly. What follows is a specific realization of the
construction and of the proof of the Theorem 3.3.

Consider a set of successive vertices $S=\{v_{1},
v_{2},...,v_{N}\}\subset \mathbb{Z}, $ and the corresponding space
$L_{2}(S)$. If $bS=\{v_{0},v_{N+1}\}$ is the boundary of $S$, then
for any $\varphi\in L_{2}(S)$ the function
$\mathcal{L}_{\mathbb{Z}}\varphi$ has support on $S\cup b S$ and
$$
\mathcal{L}_{\mathbb{Z}}\varphi(v_{0})=-\varphi(v_{1}),
\mathcal{L}_{\mathbb{Z}}\varphi(v_{1})=2\varphi(v_{1})-\varphi(v_{2}),
$$
$$
\mathcal{L}_{\mathbb{Z}}\varphi(v_{N})=2\varphi(v_{N})-\varphi(v_{N-1}),
\mathcal{L}_{\mathbb{Z}}\varphi(v_{N+1})=-\varphi(v_{N}),
$$
and for any other $v_{j}$ with $2\leq j\leq N-1,$
$$
\mathcal{L}_{\mathbb{Z}}\varphi(v_{j})=-\varphi(v_{N-1})+2\varphi(v_{j})-\varphi(v_{N+1}).
$$
 Let $C_{2N+2}=\Gamma(S)$ be a cycle graph
$$
C_{2N+2}=\{u_{-N-1},u_{-N},...,u_{-1}, u_{0},
u_{1},u_{2},...,u_{N}, u_{N+1}\}
$$
with  the following identification
$$
u_{-N-1}= u_{N+1}.
$$
Thus the total number of vertices in $C_{2N+2}$ is $2N+2$. We
introduce an embedding of $S\cup bS$ into $C_{2N+2}$ by the
following identification
$$v_{_{0}}=u_{0},
v_{1}=u_{1},...,v_{N}=u_{N},v_{N+1}=u_{N+1}.
$$
This embedding gives a rise to an embedding of $L_{2}(S)$ into
$L_{2}(C_{2N+2})$, namely every $\varphi\in L_{2}(S)$ is
identified with a function $F_{\varphi}\in L_{2}(C_{2N+2})$ for
which
$$
F_{\varphi}(u_{0})=0, F_{f\varphi}(u_{1})=\varphi(v_{1}), ...,
F_{\varphi}(u_{N})=\varphi(v_{N}), F_{\varphi}(u_{N+1})=0,
$$
and also
$$
 F_{\varphi}(u_{-1})=-\varphi(v_{1}), ..., F_{\varphi}(u_{-N})=-\varphi(v_{N}).
$$
It is important to note  that
$$
\sum_{u\in C_{2N+2}}F_{\varphi}(u)=0.
$$
If  $\mathcal{L}_{C}$ is the Laplace operator on the cycle
$C_{2N+2}$ then a direct computation  shows that for the vector
$F_{\varphi}$ defined above  the following is true
$$
2\|\varphi\|=\|F_{\varphi}\|,
2\|\mathcal{L}_{\mathbb{Z}}\varphi\|=\|\mathcal{L}_{C}F_{\varphi}\|,\varphi\in
L_{2}(S),F_{\varphi}\in L_{2}(C_{2N+2}).
$$
The operator $\mathcal{L}_{C}$ in $ L_{2}(C_{2N+2})$ has a
complete system of orthonormal eigenfunctions
\begin{equation}
\psi_{n}(k) =\exp 2\pi i\frac{n}{2N+2}k, 0\leq n\leq 2N+1,1\leq
k\leq 2N+2,
\end{equation}
with eigenvalues
\begin{equation}
\lambda_{n}=1-\cos \frac{2\pi n}{2N+2}, 0\leq n\leq 2N+1.
\end{equation}
The definition of  the function $F_{\varphi}\in L_{2}(C_{2N+2})$
implies that it  is orthogonal to all constants and its Fourier
series does not contain a term which corresponds to the index
$n=0$. It allows to obtain the following estimate
$$
\|\mathcal{L}_{C}F_{\varphi}\|^{2}=\sum
_{n=1}^{2N+1}\lambda_{n}^{2}\left|\left<F_{\varphi},\psi_{n}\right>\right|^{2}\geq
4\sin^{4}\frac{\pi}{2N+2}\|F_{\varphi}\|^{2}.
$$

It gives the following  estimate for functions $f$ from $L_{2}(S)$
$$
\|\varphi\|\leq \frac{1}{2}\sin^{-2}\frac {\pi}{2N+2}
\|\mathcal{L}_{\mathbb{Z}}\varphi\|
$$

Thus we  proved the following Lemma.

\begin{lem}
If $S=\{v_{1},v_{2},...,v_{N}\}$ consists of $|S|=N$ successive
vertices of a line graph $\mathbb{Z}$ then it is a $\Lambda$-set
for
$$
\Lambda=\frac{1}{2}\sin^{-2}\frac{\pi}{2|S|+2}.
$$
In other words, for any $\varphi\in L_{2}(S)$ the following
inequality holds true
$$
\|\varphi\|\leq \Lambda\|\mathcal{L}_{\mathbb{Z}}\varphi\|.
$$
\end{lem}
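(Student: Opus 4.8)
The plan is to specialize the general Poincar\'e estimate of Theorem 3.3 to the line graph, so that the entire problem collapses to identifying the doubled graph $\Gamma(S)$ and computing its spectral gap. First I would describe $\Gamma(S)$ explicitly. Since $S=\{v_1,\dots,v_N\}$ is a block of successive integers, its boundary is the two-point set $bS=\{v_0,v_{N+1}\}$, so the induced graph $\overline S=S\cup bS$ is a path on $N+2$ vertices. The construction defining $\Gamma(S)$ takes two copies of this path and glues them along the two boundary vertices $v_0$ and $v_{N+1}$; following the vertices around, $v_0\!-\!v_1\!-\!\cdots\!-\!v_{N+1}$ along the first copy and $v_{N+1}\!-\!v_N'\!-\!\cdots\!-\!v_1'\!-\!v_0$ back along the second, one obtains a single cycle on $2(N+2)-2=2N+2$ vertices. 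Thus $\Gamma(S)=C_{2N+2}$.

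Next I would compute $\lambda_1(C_{2N+2})$. The cycle is $2$-regular, so its normalized Laplacian is $\mathcal L_C=I-\tfrac12 A$, which is diagonalized by the characters $\psi_n(k)=\exp\!\big(2\pi i\,nk/(2N+2)\big)$ with eigenvalues $\lambda_n=1-\cos\frac{2\pi n}{2N+2}$, $0\le n\le 2N+1$. The smallest nonzero eigenvalue occurs at $n=1$ (equivalently $n=2N+1$), so
\[
\lambda_1(C_{2N+2})=1-\cos\frac{2\pi}{2N+2}=2\sin^2\frac{\pi}{2N+2}.
\]
Substituting into Theorem 3.3 gives, for every $\varphi\in L_2(S)$,
\[
\|\varphi\|\le \frac{1}{\lambda_1(\Gamma(S))}\,\|\mathcal L_{\mathbb Z}\varphi\|=\frac12\sin^{-2}\frac{\pi}{2N+2}\,\|\mathcal L_{\mathbb Z}\varphi\|,
\]
which is precisely the asserted constant $\Lambda$ once we recall $|S|=N$.

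I would also keep in reserve the self-contained route that avoids quoting Theorem 3.3 and, in fact, exhibits the constant as essentially sharp: embed $L_2(S)$ into $L_2(C_{2N+2})$ by the odd extension $F_\varphi$ that equals $\varphi$ on one copy of $\overline S$, equals $-\varphi$ on the other, and vanishes at the two glued boundary vertices. I would then check the two norm identities $\|F_\varphi\|=\sqrt2\,\|\varphi\|$ and $\|\mathcal L_C F_\varphi\|=\sqrt2\,\|\mathcal L_{\mathbb Z}\varphi\|$ by matching the difference formula for $\mathcal L_{\mathbb Z}\varphi$ at $v_0,\dots,v_{N+1}$ with $\mathcal L_C$ on the cycle, observe that $F_\varphi$ has zero mean and hence is orthogonal to the constant eigenfunction $\psi_0$, and expand $F_\varphi$ in the $\psi_n$ to obtain $\|\mathcal L_C F_\varphi\|^2\ge \lambda_1^2\|F_\varphi\|^2$ using only the nonzero modes; the two factors of $\sqrt2$ then cancel and reproduce the same inequality.

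The Fourier diagonalization of the cycle and the norm bookkeeping are routine. The step that genuinely needs care, and which I expect to be the main obstacle, is the boundary bookkeeping: one must verify that the odd extension really makes $\mathcal L_C F_\varphi$ agree (up to the global factor) with $\mathcal L_{\mathbb Z}\varphi$ on all of $S\cup bS$, in particular that the values $\mathcal L_{\mathbb Z}\varphi(v_0)=-\varphi(v_1)$ and $\mathcal L_{\mathbb Z}\varphi(v_{N+1})=-\varphi(v_N)$ are correctly produced by the two cycle edges meeting at each glued vertex, and that orthogonality to the full zero-mode $\psi_0$ is what licenses replacing the whole spectrum by its smallest positive eigenvalue.
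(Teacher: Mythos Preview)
Your proposal is correct and is precisely the paper's approach: the paper explicitly identifies $\Gamma(S)=C_{2N+2}$, embeds $\varphi$ via the odd extension $F_\varphi$, uses its orthogonality to the constant eigenfunction, and reads off $\lambda_1(C_{2N+2})=1-\cos\frac{2\pi}{2N+2}=2\sin^2\frac{\pi}{2N+2}$. One small correction to your reserve route: at the glued boundary vertices the odd reflection forces $\mathcal L_C F_\varphi(u_0)=\mathcal L_C F_\varphi(u_{N+1})=0$ rather than reproducing $\mathcal L_{\mathbb Z}\varphi$ there, so the second ``identity'' is actually the inequality $\|\mathcal L_C F_\varphi\|\le\sqrt2\,\|\mathcal L_{\mathbb Z}\varphi\|$ (this is exactly the general estimate (3.9)), which is the direction you need.
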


Note that in the case $|S|=1$ the last Lemma gives the inequality
$$
\|\delta_{v}\|\leq \|\mathcal{L}_{\mathbb{Z}}\delta_{v}\|,
S=\{v\},
$$
but direct calculations in the Example 1 give a better value for
$\lambda$:
$$
\|\delta_{v}\|=\sqrt{\frac{2}{3}}\left\|\mathcal{L}_{\mathbb{Z}}\delta_{v}\right\|,
v\in V.
$$

 A combination
of this Lemma with Lemma 3.7 and Theorem 3.2 gives the Theorem 1.4
from the Introduction.

\bigskip

 A similar result holds true for a lattice
$\mathbb{Z}^{n}$ of any dimension. Consider for example the case
$n=2$. In this situation  the Fourier transform $\mathcal{F}$ on
the space $L_{2}(\mathbb{Z}^{2})$ is the  unitary operator
$\mathcal{F}$ which is defined by the formula
$$
\mathcal{F}(f)(\xi_{1}, \xi_{2})=\sum_{(k_{1},k_{2})\in
\mathbb{Z}^{2}}f(k_{1}, k_{2})e^{i k_{1}\xi_{1}+i k_{2}\xi_{2}},
f\in L_{2}(\mathbb{Z}\times \mathbb{Z}),
$$
where $ (\xi_{1},\xi_{2})\in [-\pi, \pi)\times [-\pi, \pi)$. The
operator $\mathcal{F}$ is isomorphism  of the space $L_{2}(G)$ on
the space $L_{2}(\mathbb{T}\times
\mathbb{T})=L_{2}(\mathbb{T}\times \mathbb{T},
d\xi_{1}d\xi_{2}/4\pi^{2}),$ where $\mathbb{T}$ is the
one-dimensional torus. the following formula holds true
$$
\mathcal{F}(\mathcal{L}f)(\xi)=\left(\sin^{2}\frac{\xi_{1}}{2}+
\sin^{2}\frac{\xi_{2}}{2}\right)\mathcal{F}(f)(\xi).
$$
We have the following result.
\begin{thm} The spectrum of the Laplace operator on the lattice $\mathbb{Z}^{2}$
 is the set $[0,2]$.  A function $f$ belongs to the space
$PW_{\omega}(\mathbb{Z}^{2}), 0\leq \omega\leq 2,$ if and only if
the support of $\mathcal{F}f$ is a subset $\Omega_{\omega}$ of
$[-\pi,\pi)\times [-\pi,\pi)$ on which
$$
\sin^{2}\frac{\xi_{1}}{2}+ \sin^{2}\frac{\xi_{2}}{2}\leq \omega.
$$
\end{thm}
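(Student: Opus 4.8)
The plan is to mimic the strategy behind the one-dimensional Theorem 5.1, exploiting the fact that the two-dimensional Fourier transform $\mathcal{F}$ simultaneously diagonalizes the Laplacian. First I would record that, by the multiplier formula stated just before the theorem, $\mathcal{F}$ conjugates $\mathcal{L}$ into the operator of multiplication by the continuous symbol
$$
m(\xi_{1},\xi_{2})=\sin^{2}\frac{\xi_{1}}{2}+\sin^{2}\frac{\xi_{2}}{2}
$$
on $L_{2}(\mathbb{T}\times\mathbb{T},\,d\xi_{1}d\xi_{2}/4\pi^{2})$. Since $\mathcal{F}$ is unitary, $\mathcal{L}$ and this multiplication operator are unitarily equivalent and hence share the same spectrum and the same functional calculus.

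For the spectrum claim I would use the standard fact that the spectrum of multiplication by a bounded continuous function on an $L_{2}$-space whose measure has full support equals the essential range of the multiplier, which here coincides with the ordinary range of $m$ because the underlying measure is Lebesgue and $m$ is continuous on a compact set. As $\xi_{1}$ runs over $[-\pi,\pi)$ the quantity $\sin^{2}(\xi_{1}/2)$ sweeps out all of $[0,1]$, and likewise for $\xi_{2}$; their sum therefore sweeps out exactly $[0,2]$. This yields $\sigma(\mathcal{L})=[0,2]$.

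For the Paley-Wiener characterization I would observe that under the concrete representation furnished by $\mathcal{F}$ the functional calculus is transparent: for any bounded Borel function $\Phi$ the operator $\Phi(\mathcal{L})$ corresponds to multiplication by $\Phi\circ m$. In particular the spectral projection $P_{[0,\omega]}$ of $\mathcal{L}$ corresponds to multiplication by the indicator of the sublevel set
$$
\Omega_{\omega}=\left\{(\xi_{1},\xi_{2})\in[-\pi,\pi)\times[-\pi,\pi):\;\sin^{2}\frac{\xi_{1}}{2}+\sin^{2}\frac{\xi_{2}}{2}\leq\omega\right\}.
$$
By Definition 1 a function $f$ lies in $PW_{\omega}(\mathbb{Z}^{2})$ exactly when $f=P_{[0,\omega]}f$, and this holds if and only if $\mathcal{F}f$ vanishes off $\Omega_{\omega}$, i.e.\ the support of $\mathcal{F}f$ is contained in $\Omega_{\omega}$. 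That is precisely the asserted description.

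The one point demanding care, and the step I expect to be the genuine (if minor) obstacle, is the identification of the abstract spectral ``Fourier transform'' $F$ of Definition 1 with the concrete transform $\mathcal{F}$. The hard part is to note that $\mathcal{F}$ already realizes $L_{2}(\mathbb{Z}^{2})$ as a spectral representation of $\mathcal{L}$, diagonalizing it as multiplication by $m(\xi)$, so that the band-limiting condition ``$Ff$ supported in $[0,\omega]$'' of Definition 1 translates verbatim into ``$\mathcal{F}f$ supported in $\{m\leq\omega\}$''. Once this correspondence between the two transforms is made explicit, the equivalence is immediate and the theorem follows.
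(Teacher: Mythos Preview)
Your proposal is correct and follows exactly the approach the paper has in mind: the paper states the multiplier identity $\mathcal{F}(\mathcal{L}f)(\xi)=\left(\sin^{2}\frac{\xi_{1}}{2}+\sin^{2}\frac{\xi_{2}}{2}\right)\mathcal{F}(f)(\xi)$ and then declares the theorem without proof, just as in the one-dimensional case (Theorem~5.1, introduced as ``obvious''). Your write-up simply spells out the implicit reasoning---unitary equivalence to a multiplication operator, range of the symbol giving the spectrum, and the sublevel-set description of the spectral projection---so there is nothing to add.
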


The same proof as in the case of one-dimensional lattice gives the
following Theorem.

\begin{thm}
If graph $G$ is the $2$-dimensional lattice $\mathbb{Z}^{2}$, then
for any finite set of vertices $S\subset \mathbb{Z}^{2}$, any
$\varphi\in L_{2}(S)$ and any $0\leq \omega\leq 2$ there exists a
function $f_{\varphi}\in PW_{\omega}(\mathbb{Z}^{2})$ which
coincide with $\varphi$ on $S$.
\end{thm}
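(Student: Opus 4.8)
The plan is to follow the argument of Theorem 5.3, recasting the statement as the surjectivity of the restriction operator $\mathcal{R}_{\omega}^{S}:PW_{\omega}(\mathbb{Z}^{2})\to L_{2}(S)$. Since $S$ is finite, $L_{2}(S)$ is finite dimensional, so surjectivity is equivalent to the assertion that no nonzero $h\in L_{2}(S)$ is orthogonal to every restriction $f|_{S}$ with $f\in PW_{\omega}(\mathbb{Z}^{2})$. Because $h$ is supported on $S$, the inner product $\langle f|_{S},h\rangle_{L_{2}(S)}$ equals $\langle f,h\rangle_{L_{2}(\mathbb{Z}^{2})}$, so I would assume $\langle f,h\rangle=0$ for all $f\in PW_{\omega}(\mathbb{Z}^{2})$ and aim to derive $h=0$.

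First I would pass to the Fourier side. The transform $\mathcal{F}$ is unitary from $L_{2}(\mathbb{Z}^{2})$ onto $L_{2}(\mathbb{T}\times\mathbb{T})$, so by Parseval's relation the orthogonality condition becomes $\langle\mathcal{F}f,\mathcal{F}h\rangle=0$ for all $f\in PW_{\omega}(\mathbb{Z}^{2})$. By Theorem 5.7, $f\in PW_{\omega}(\mathbb{Z}^{2})$ precisely when $\mathcal{F}f$ is supported on the set $\Omega_{\omega}=\{(\xi_{1},\xi_{2}):\sin^{2}(\xi_{1}/2)+\sin^{2}(\xi_{2}/2)\le\omega\}$. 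As $\mathcal{F}$ is a unitary bijection, $\mathcal{F}f$ ranges over all $L_{2}$-functions supported on $\Omega_{\omega}$, so the condition forces $\mathcal{F}h=0$ almost everywhere on $\Omega_{\omega}$. For $\omega>0$ the set $\Omega_{\omega}$ contains a neighborhood of the origin, hence has positive measure.

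The heart of the matter, and the place where the two-dimensional case genuinely departs from the one-dimensional one, is to conclude from the vanishing of $\mathcal{F}h$ on a positive-measure set that $h=0$. Because $h$ is supported on the finite set $S$, its transform is a trigonometric polynomial $\mathcal{F}h(\xi_{1},\xi_{2})=\sum_{(k_{1},k_{2})\in S}h(k_{1},k_{2})e^{ik_{1}\xi_{1}+ik_{2}\xi_{2}}$, hence real-analytic in $(\xi_{1},\xi_{2})$. In one variable a nonzero trigonometric polynomial has only finitely many zeros, but in two variables its zero set need not be finite; I would handle this by a Fubini-type slicing argument. If $\mathcal{F}h$ vanished on a set of positive planar measure, then for a positive-measure set of values $\xi_{2}$ the slice $\xi_{1}\mapsto\mathcal{F}h(\xi_{1},\xi_{2})$ would vanish on a positive one-dimensional measure; being a trigonometric polynomial in $\xi_{1}$, it must then vanish identically in $\xi_{1}$. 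Reading off the coefficient of each $e^{ik_{1}\xi_{1}}$ gives trigonometric polynomials in $\xi_{2}$ that vanish on a positive-measure set of $\xi_{2}$, hence vanish identically, forcing every $h(k_{1},k_{2})=0$.

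This contradiction shows there is no nonzero $h\in L_{2}(S)$ orthogonal to all restrictions, so the restriction operator is surjective and the desired $f_{\varphi}\in PW_{\omega}(\mathbb{Z}^{2})$ exists for every $\varphi\in L_{2}(S)$. The main obstacle is precisely the measure-zero property of the zero set of a trigonometric-polynomial (real-analytic) function of two variables; everything else is a direct transcription of the one-dimensional proof of Theorem 5.3.
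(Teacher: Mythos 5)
Your proposal is correct and follows essentially the same route as the paper, which for this theorem simply asserts that ``the same proof as in the case of one-dimensional lattice'' applies. Your Fubini slicing argument correctly supplies the one detail that does not transfer verbatim from dimension one --- namely that a nonzero two-variable trigonometric polynomial cannot vanish on a set of positive planar measure --- and your explicit restriction to $\omega>0$ (so that $\Omega_{\omega}$ has positive measure) is the right reading of the theorem's hypothesis.
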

 Given a set $S=\{v_{n,m}\}, 1\leq n\leq N, 1\leq
m\leq M,$ we consider embedding of $S$ into two-dimensional
discrete torus of the size $T=(2N+2)\times (2M+2)=\{u_{n,m}\}$.
Every $f\in L_{2}(S)$ is identified with a function $g\in
L_{2}(T)$ in the following way
$$
g(u_{n,m})=f(v_{n,m}),1\leq n\leq N, 1\leq m\leq M,
$$
and
$$
g(u_{n,m})=0, N<n\leq N+2, M<m\leq M+2.
$$

We have
$$
\|\mathcal{L}_{G}f\|=\|\mathcal{L}_{T}g\|
$$
where $\mathcal{L}_{T}$ is the combinatorial Laplacian on the
discrete torus $T$. Since eigenfunctions of $\mathcal{L}_{T}$ are
products of the corresponding functions (5.1) a  direct
calculation gives the following inequality
$$
\|\varphi\|\leq \frac{1}{4}\frac{1}{\min\left(\sin
\frac{\pi}{2N+2},\sin
\frac{\pi}{2M+2}\right)}\|\mathcal{L}_{G}\varphi\|, \varphi\in
L_{2}(S).
$$

In a similar way one can  obtain corresponding results for a
lattice $\mathbb{Z}^{n}$ of any dimension. Note that the spectrum
of the Laplace operator on $\mathbb{Z}^{n}$ is $[0, 2]$ and
$\Omega_{\mathbb{Z}^{n}}=\sqrt{(2n+1)/2n}$.

Let $N_{j}=\{N_{1,j},...,N_{n,j}\}, j\in \mathbb{N}, $  be a
sequence $n$-tuples of natural numbers.  For every $j$ the
notation $S(N_{j})$ will be used for a "rectangular solid" of
"dimensions" $N_{1,j}\times N_{2,j}\times...\times N_{n,j}$.

Using these notations we  formulate the following sampling
Theorem.

\begin{thm}

If $S$ is a finite or infinite union of rectangular solids
$\{S(N_{j})\}$
    of vertices of dimensions  $N_{1,j}\times N_{2,j}\times...\times N_{n,j}$
    such that
\begin{enumerate}

 \item  the sets $\overline{S}_{j}=S(N_{j})\cup b S(N_{j})$ are disjoint,

and

  \item  the following inequality holds true for all $j$
$$
\omega< 4\min\left(\sin \frac{\pi}{2N_{1,j}+2},\sin
\frac{\pi}{2N_{2,j}+2},...,\sin \frac{\pi}{2N_{n,j}+2}\right),
$$
then every $f\in PW_{\omega}(\mathbb{Z}^{n})$ is uniquely
determined by its values on $U=V(\mathbb{Z}^{n})\backslash S$.

\end{enumerate}
\end{thm}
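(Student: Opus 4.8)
The plan is to obtain this theorem from the general $\Lambda$-set machinery in exactly the way Theorem 1.4 was obtained from Lemma 5.4, Lemma 3.7 and Theorem 3.2, the only genuinely new ingredient being a Poincar\'e inequality for a single rectangular solid. First I would reduce the global statement to a local one: by Lemma 3.7 a union $S=\bigcup_j S(N_j)$ whose closures $\overline{S}_j=S(N_j)\cup bS(N_j)$ are pairwise disjoint (hypothesis (1)) is itself a $\Lambda$-set with $\Lambda=\sup_j\Lambda_j$, where $\Lambda_j$ is the Poincar\'e constant of the individual solid $S(N_j)$. Hence it suffices to show that each $S(N_j)$ is a $\Lambda_j$-set whose constant satisfies $1/\Lambda_j=4\min_i\sin\frac{\pi}{2N_{i,j}+2}$. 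Granting this, hypothesis (2) reads exactly $\omega<1/\Lambda_j$ for every $j$, so $\Lambda=\sup_j\Lambda_j<1/\omega$, and Theorem 3.2 then immediately gives that $U=V(\mathbb{Z}^n)\setminus S$ is a uniqueness set for $PW_\omega(\mathbb{Z}^n)$.

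For a single solid the argument is the $n$-dimensional version of the computation displayed above for $n=2$. I would embed $\overline{S}(N_j)$ into the discrete torus $T_j=C_{2N_{1,j}+2}\times\cdots\times C_{2N_{n,j}+2}$ and extend $\varphi\in L_2(S(N_j))$ to a function $F_\varphi\in L_2(T_j)$ by an odd reflection in each of the $n$ coordinate directions, forcing the value $0$ on every reflecting hyperplane exactly as the boundary points $u_0,u_{N+1}$ carried the value $0$ in the one-dimensional case. Two identities then have to be verified: $\|F_\varphi\|_{L_2(T_j)}=2^{n/2}\|\varphi\|_{L_2(\mathbb{Z}^n)}$, which is immediate since each of the $\prod_iN_{i,j}$ values of $\varphi$ is copied $2^n$ times up to sign; and the intertwining relation $\|\mathcal{L}_{T_j}F_\varphi\|_{L_2(T_j)}=2^{n/2}\|\mathcal{L}_{\mathbb{Z}^n}\varphi\|_{L_2(\mathbb{Z}^n)}$, which encodes that the torus Laplacian acts on $F_\varphi$ precisely as $\mathcal{L}_{\mathbb{Z}^n}$ acts on $\varphi$ in the interior while the odd extension reproduces the correct boundary contributions. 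Because $F_\varphi$ is odd it is orthogonal to the constant eigenfunction, so expanding it in the product eigenbasis (5.1) with eigenvalues (5.2) bounds $\|\mathcal{L}_{T_j}F_\varphi\|$ below by the smallest surviving eigenvalue times $\|F_\varphi\|$. Since the eigenfunctions factor over the coordinates, this smallest eigenvalue is controlled by the slowest mode in a single direction, and the same direct calculation as in the two-dimensional display above evaluates $1/\Lambda_j$ to the right-hand side of hypothesis (2). Combining the three estimates yields $\|\varphi\|\le\Lambda_j\|\mathcal{L}_{\mathbb{Z}^n}\varphi\|$ with the required constant.

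The routine but delicate part, and the step I expect to be the main obstacle, is the intertwining identity $\mathcal{L}_{T_j}F_\varphi=(\text{odd extension of }\mathcal{L}_{\mathbb{Z}^n}\varphi)$. In one dimension the boundary $bS$ was a single pair of points and the check was a one-line computation; for a solid the boundary is an entire shell made of faces, edges and corners, and one must confirm that simultaneous odd reflection in all $n$ coordinates makes the torus Laplacian agree with $\mathcal{L}_{\mathbb{Z}^n}$ at every boundary type and produces the zeros on the reflecting hyperplanes automatically. Keeping this bookkeeping correct for all dimensions and all faces of the solid is where the real care is needed. Once it is settled, orthogonality to the constants together with the product structure of the spectrum finishes the single-solid Poincar\'e inequality, and the global statement follows by the Lemma 3.7 and Theorem 3.2 assembly described in the first paragraph.
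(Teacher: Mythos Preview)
Your proposal follows exactly the paper's route: a Poincar\'e inequality for a single rectangular solid via odd reflection into the discrete torus $C_{2N_{1,j}+2}\times\cdots\times C_{2N_{n,j}+2}$ and the product eigenbasis, then assembly through Lemma~3.7 and Theorem~3.2. The step you flag as the main obstacle is actually easier than you expect, because only the inequality $\|\mathcal{L}_{T_j}F_\varphi\|\le 2^{n/2}\|\mathcal{L}_{\mathbb{Z}^n}\varphi\|$ is needed (compare (3.9) in Theorem~3.3): on the reflecting hyperplanes $\mathcal{L}_{T_j}F_\varphi$ vanishes by odd symmetry while $\mathcal{L}_{\mathbb{Z}^n}\varphi$ generally does not, so the equality you wrote fails at the boundary shell but the inequality is immediate and points the right way.
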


As a consequence we obtain the Theorem 1.5 from the Introduction.

\bigskip

We turn now to homogeneous trees. On homogeneous trees there is a
well developed harmonic analysis \cite{FT}, \cite{CMS}. In
particular there is the  Helgason-Fourier transform which provides
the spectral resolution of the combinatorial Laplacian
$\mathcal{L}$. One can use this Helgason-Fourier transform to give
more explicit definition of the Paley-Wiener spaces
$PW_{\omega}(G)$.

We briefly introduce the spherical harmonic analysis on
homogeneous trees. Let $G$ be a homogeneous tree of order $q+1,
q\geq 2$ and $o$ be a root of it. The distance of $v\in V(G)$ from
$o$ is denoted by $|v|$. A function in $L_{2}(G)$ is said to be
radial if it depends only on $|v|$. The space of all radial
functions in  $L_{2}(G)$ will be denoted by $L_{2}^{\natural}(G)$
and the space of all radial functions in $PW_{\omega}(G)$ will be
denoted by $PW_{\omega}^{\natural}(G)$.

We introduce the notation $\tau=2\pi/\ln q$ and consider the torus
$\mathbb{T}=\mathbb{R}/\tau\mathbb{Z}$ which is identified with
$[-\tau/2, \tau/2)$. Let $\mu$ denote the Plancherel measure on
$\mathbb{T}$, given by the formula
$$
d\mu(\xi)=\frac{q\ln q}{4\pi(q+1)}|\textbf{c}(\xi)|^{-2}d\xi,
$$
where
$$
\textbf{c}(\xi)=\frac{q^{1/2}}{q+1}\frac{q^{1/2+i\xi}-q^{-1/2-i\xi}}{q^{i\xi}-q^{-i\xi}},
\xi\in \mathbb{T}.
$$

The spherical functions $\Phi_{\xi}(v), \xi\in[-\tau/2, \tau/2),
v\in V(G)$, are the radial eigenfunctions of the Laplace operator
$\mathcal{L}$ satisfying $\Phi_{\xi}(v)(o)=1.$ The explicit
formula for such functions is
$$
\textbf{c}(\xi)q^{(i\xi-1/2)|v|}+\textbf{c}(\xi)q^{(-i\xi-1/2)|v|},
\xi\in [-\tau/2, \tau/2), v\in V(G).
$$
and the corresponding eigenvalue is given by the formula
\begin{equation}
1-\eta(q)\cos (\xi \ln q), \eta(q)=\frac{2q^{1/2}}{q+1}.
\end{equation}

The spherical Helgason-Fourier transform is defined by the formula
$$
\mathcal{H}f(\xi)=\sum_{v\in V(G)}f(v)\Phi_{\xi}(v), \xi\in
[-\tau/2, \tau/2), f\in L_{1}^{\natural}(G).
$$
The following Theorem is known \cite{FT}
\begin{thm}
The spherical Helgason-Fourier transform extends to an isometry of
$L_{2}^{\natural}(G)$ onto $L_{2}(\mathbb{T}, \mu)$, and
corresponding Plancherel formula holds
$$
\|f\|=\left(\int_{-\tau/2}^{\tau/2}\left|\mathcal{H}(f)(\xi)\right|^{2}d\mu(\xi)\right)^{1/2},
f\in L_{2}^{\natural}(G).
$$
\end{thm}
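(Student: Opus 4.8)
The plan is to prove this as the Plancherel theorem for a commutative convolution algebra, which is the standard route for spherical analysis on trees. First I would identify $L_{2}^{\natural}(G)$ with the weighted sequence space $\ell^{2}(\mathbb{N}_{0},N)$, recording a radial function by its common value $f(n)$ on the sphere $\{v:|v|=n\}$ and setting $N(n)=|\{v:|v|=n\}|$, which equals $1$ for $n=0$ and $(q+1)q^{n-1}$ for $n\geq 1$, so that $\|f\|^{2}=\sum_{n\geq 0}N(n)|f(n)|^{2}$. Under this identification the radial functions form a commutative algebra under convolution on which $\mathcal{L}$ (equivalently the adjacency operator $A=(q+1)(I-\mathcal{L})$) acts as a Jacobi, i.e.\ three-term recurrence, operator, and $\delta_{o}$ is a cyclic vector for it.

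The next step is to recognize the spherical functions as the generalized eigenfunctions of this operator. Since $\Phi_{\xi}$ is radial, satisfies $\mathcal{L}\Phi_{\xi}=(1-\eta(q)\cos(\xi\ln q))\Phi_{\xi}$, and is normalized by $\Phi_{\xi}(o)=1$, its values are determined by the radial recurrence, and I would check directly that the stated combination of $q^{(i\xi-1/2)n}$ and $q^{(-i\xi-1/2)n}$ with coefficients built from $\textbf{c}(\xi)$ solves it with the correct initial value. Writing $\mathcal{H}f(\xi)=\sum_{n\geq 0}N(n)f(n)\Phi_{\xi}(n)$, the whole theorem reduces to the orthogonality relations $\int_{\mathbb{T}}\Phi_{\xi}(m)\overline{\Phi_{\xi}(n)}\,d\mu(\xi)=\delta_{mn}/N(n)$, since these give $\int|\mathcal{H}f|^{2}\,d\mu=\sum_{n}N(n)|f(n)|^{2}=\|f\|^{2}$ for every finitely supported radial $f$, and extension by density then yields the isometry on all of $L_{2}^{\natural}(G)$.

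I would establish the orthogonality relations, and with them pin down the precise form of $d\mu$ including the constant $\frac{q\ln q}{4\pi(q+1)}$ and the weight $|\textbf{c}(\xi)|^{-2}$, through the resolvent of $\mathcal{L}$. The Stieltjes transform of the sought spectral measure equals $\left<(\mathcal{L}-z)^{-1}\delta_{o},\delta_{o}\right>$, which is the classical, explicitly computable diagonal Green's function of the tree; Stieltjes inversion produces the density, and the change of variable $\lambda=1-\eta(q)\cos(\xi\ln q)$ converts it into the stated $d\mu$. Surjectivity onto $L_{2}(\mathbb{T},\mu)$ then follows because the images of finitely supported radial functions are exactly the finite cosine polynomials in $\cos(\xi\ln q)$, which are dense, and an isometry has closed range.

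The hard part is the Green's-function computation in which $d\mu$ is identified: one must carry out the Stieltjes inversion, equivalently a residue computation using the asymptotics $\Phi_{\xi}(n)\sim\textbf{c}(\xi)q^{(i\xi-1/2)n}+\textbf{c}(-\xi)q^{(-i\xi-1/2)n}$, and verify that the sphere weight $N(n)\sim q^{n}$ exactly cancels the $q^{-n}$ decay of $|\Phi_{\xi}(n)|^{2}$, which is what makes the integrals converge and forces the Plancherel density to be $|\textbf{c}(\xi)|^{-2}$ up to the explicit constant. Everything else is routine once this density is in hand, so the result is classical and the author rightly cites \cite{FT}.
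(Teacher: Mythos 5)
The paper does not prove this theorem at all: it is quoted as a known result with the citation \cite{FT}, so there is no in-paper argument to compare yours against. Your sketch is the standard proof of the spherical Plancherel theorem on a homogeneous tree --- identify $L_{2}^{\natural}(G)$ with $\ell^{2}(\mathbb{N}_{0},N)$, $N(0)=1$, $N(n)=(q+1)q^{n-1}$; view $\mathcal{L}$ on radial functions as a Jacobi operator with cyclic vector $\delta_{o}$; verify the spherical functions solve the radial recurrence (note the paper's formula has a typo, the second coefficient should be $\textbf{c}(-\xi)$, which you implicitly correct); and extract the spectral measure of $\delta_{o}$ from the explicit diagonal Green's function by Stieltjes inversion. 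That is essentially the argument in the cited reference, and reducing everything to the dual orthogonality relations $\int\Phi_{\xi}(m)\overline{\Phi_{\xi}(n)}\,d\mu=\delta_{mn}/N(n)$ is the right organization.

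One caveat on surjectivity: since the eigenvalue $1-\eta(q)\cos(\xi\ln q)$ and the normalization $\Phi_{\xi}(o)=1$ are even in $\xi$, one has $\Phi_{-\xi}=\Phi_{\xi}$, so every $\mathcal{H}f$ is an even function of $\xi$; correspondingly the polynomials in $\cos(\xi\ln q)$ you produce are dense only in the even subspace of $L_{2}(\mathbb{T},\mu)$ (the odd function $\sin(\xi\ln q)$ is orthogonal to all of them, $\mu$ being even). So your argument proves that $\mathcal{H}$ is an isometry onto $L_{2}(\mathbb{T},\mu)_{\mathrm{even}}$, equivalently onto $L_{2}([0,\tau/2],2\mu)$, which is in fact the correct form of the statement; the version quoted in the paper, with the full torus as target, is slightly too strong as written. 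The Plancherel identity itself, which is all that is used later in the paper, is unaffected.
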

 Using the fact that $\Phi_{z}(v), v\in V(G),$ is an
eigenfunction of $\mathcal{L}$ with the eigenvalue (5.3) one can
obtain the following formula
$$
\mathcal{H}(\mathcal{L}f)(\xi)=\left(1-\eta(q)\cos (\xi \ln
q)\right)\mathcal{H}(f)(\xi).
$$
The next  statement is obvious.

\begin{thm}
If $G$ is a homogeneous tree of order $q+1$ then a  function $f$
belongs to the space $ PW_{\omega}^{\natural}(G)$ if and only if
the support of $\mathcal{H}f$ is a subset $\Pi_{\omega}$ where
$$\Pi_{\omega}=\left\{\xi\in [1-\eta(q),1+\eta(q)]:
1-\eta(q)<1-\eta(q)\cos (\xi \ln q) \leq \omega\right\}.
$$
\end{thm}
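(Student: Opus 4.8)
The plan is to transport the abstract spectral description of $PW_\omega(G)$ given in Definition 1 into the concrete model furnished by the spherical Helgason--Fourier transform $\mathcal{H}$. The essential point is that, by Theorem 5.7, $\mathcal{H}$ is an isometry of $L_{2}^{\natural}(G)$ onto $L_{2}(\mathbb{T},\mu)$, while the intertwining formula recorded just above the statement shows that $\mathcal{H}$ carries $\mathcal{L}$ to multiplication by $\lambda(\xi)=1-\eta(q)\cos(\xi\ln q)$. Thus, restricted to the radial subspace, $\mathcal{H}$ is a concrete realization of the spectral transform $F$ of Definition 1, the abstract spectral variable being identified with $\lambda(\xi)$. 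Since $\xi\ln q$ ranges over $[-\pi,\pi)$, the quantity $\lambda(\xi)$ ranges over exactly $[1-\eta(q),1+\eta(q)]$; this is the spectrum of $\mathcal{L}$ on radial functions, and $\Pi_{\omega}$ should be read as the sublevel set $\{\xi:\lambda(\xi)\le\omega\}$ inside this range.

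I would prove both implications at once through the Bernstein characterization of Theorem 2.1(1): a radial $f$ lies in $PW_\omega^{\natural}(G)$ if and only if $\|\mathcal{L}^{s}f\|\le\omega^{s}\|f\|$ for all $s\ge 0$. Iterating the intertwining relation (and using that $\lambda(\xi)\ge 1-\eta(q)>0$, so that $\lambda(\xi)^{s}$ is well defined for real $s$) together with the Plancherel formula of Theorem 5.7 rewrites this family of inequalities as
$$\int_{-\tau/2}^{\tau/2}\lambda(\xi)^{2s}\,|\mathcal{H}f(\xi)|^{2}\,d\mu(\xi)\le\omega^{2s}\int_{-\tau/2}^{\tau/2}|\mathcal{H}f(\xi)|^{2}\,d\mu(\xi),\qquad s\ge 0.$$
If $\mathcal{H}f$ is supported in $\Pi_{\omega}$, then $\lambda(\xi)^{2s}\le\omega^{2s}$ on the support and the inequality is immediate, so $f\in PW_\omega^{\natural}(G)$.

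For the converse I would suppose, toward a contradiction, that $\mathcal{H}f$ has positive $\mu$-measure on the set $\{\lambda(\xi)>\omega\}$. Then the essential supremum of $\lambda$ over the support of $\mathcal{H}f$ exceeds $\omega$, and letting $s\to\infty$ in the displayed family forces $\lim_{s\to\infty}\|\mathcal{L}^{s}f\|^{1/s}>\omega$, contradicting the Bernstein bound (this is exactly the content of Theorem 2.1(3), which identifies that limit with the smallest admissible type). Hence $\mathcal{H}f$ vanishes $\mu$-almost everywhere off $\Pi_{\omega}$. The only point requiring a little care is the behaviour at the bottom of the spectrum: one checks that $\lambda(\xi)=1-\eta(q)$ only at $\xi=0$, where the Plancherel density $|\mathbf{c}(\xi)|^{-2}$ vanishes, so this single point carries no $\mu$-mass and the strict lower inequality $1-\eta(q)<1-\eta(q)\cos(\xi\ln q)$ in the definition of $\Pi_{\omega}$ is immaterial for the $L_{2}$-support condition. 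Everything else is a routine consequence of the isometry and of the diagonalization already established.
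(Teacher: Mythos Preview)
Your argument is correct. The paper itself offers no proof for this theorem: it is introduced with the sentence ``The next statement is obvious,'' relying implicitly on the fact that the isometry of Theorem~5.7 together with the intertwining relation $\mathcal{H}(\mathcal{L}f)(\xi)=\lambda(\xi)\,\mathcal{H}f(\xi)$ realizes, on the radial subspace, the abstract spectral transform $F$ of Definition~1, so that the support condition defining $PW_\omega$ translates verbatim into the support condition on $\mathcal{H}f$.

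Your write-up supplies exactly the details the paper suppresses. Going through the Bernstein characterization (Theorem~2.1(1),(3)) rather than appealing directly to Definition~1 is a slightly longer but perfectly legitimate route, and your observation that the single point $\xi=0$ (where $\lambda(\xi)=1-\eta(q)$) carries no $\mu$-mass because the Plancherel density $|\mathbf{c}(\xi)|^{-2}$ vanishes there is a nice clarification of why the strict inequality in the description of $\Pi_\omega$ is harmless at the $L_2$ level. Nothing is missing.
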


To formulate the next Theorem we introduce the restriction
operator
$$
\mathcal{R}_{\omega}^{S}: PW_{\omega}^{\natural}(G)\rightarrow
L_{2}^{\natural}(S), S\subset V(G),
$$
where
$$
\mathcal{R}_{\omega}^{S}(\varphi)=\varphi |_{S}, \varphi \in
PW_{\omega}^{\natural}(G).
$$
\begin{thm}
If $G$ is a homogeneous tree then for every $\omega>0$ and every
finite set $S\subset V(G)$ the restriction operator
$\mathcal{R}_{\omega}^{S}$ is surjective.

\end{thm}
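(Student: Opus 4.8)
The plan is to follow the template of the proof of Theorem~5.3, replacing the Fourier transform on $\mathbb{Z}$ by the spherical Helgason--Fourier transform $\mathcal{H}$ and Parseval's relation by the Plancherel formula of Theorem~5.7. As in the lattice case the content is in the range where $PW_{\omega}^{\natural}(G)$ is non-trivial, i.e. $\omega>\omega_{\min}=1-\eta(q)$, so that $\Pi_{\omega}$ has positive $\mu$-measure; for $\omega\le\omega_{\min}$ the space is trivial and the statement is vacuous. Throughout I write $f(r)$ for the common value of a radial function $f$ on the sphere $\{v:|v|=r\}$.

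First I would record that $L_{2}^{\natural}(S)$ is finite-dimensional, since a radial function on the finite set $S$ is determined by its values on the finitely many distances occurring in $S$. Hence $\mathcal{R}_{\omega}^{S}$ is surjective precisely when its image is dense, equivalently when its adjoint is injective, so it suffices to show that no non-zero $h\in L_{2}^{\natural}(S)$ is orthogonal to every restriction $\varphi|_{S}$, $\varphi\in PW_{\omega}^{\natural}(G)$. Writing $\hat S=\{|v|:v\in S\}$ and $m_{r}=\#\{v\in S:|v|=r\}\ge 1$ for $r\in\hat S$, radiality turns such an orthogonality relation into $\sum_{r\in\hat S} m_{r}\,\overline{h(r)}\,\varphi(r)=0$ for every radial Paley--Wiener function $\varphi$. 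Setting $c_{r}=m_{r}\overline{h(r)}$, inverting $\mathcal{H}$, and using that by Theorems~5.7 and~5.8 the transform $\mathcal{H}\varphi$ ranges over all of $L_{2}(\Pi_{\omega},\mu)$, this forces the function $g(\xi)=\sum_{r\in\hat S}c_{r}\Phi_{\xi}(r)$ to vanish $\mu$-almost everywhere on $\Pi_{\omega}$.

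It then remains to deduce $c_{r}=0$ for all $r\in\hat S$, and this is where the two analytic inputs enter. By the explicit formula for the spherical functions in terms of $\textbf{c}(\xi)$ and the exponentials $q^{(\pm i\xi-1/2)|v|}$, each $\Phi_{\xi}(r)$ is real-analytic in $\xi$; hence $g$ is real-analytic on the torus $\mathbb{T}$ and, vanishing on the positive-measure set $\Pi_{\omega}$, vanishes identically. Consequently $\sum_{r\in\hat S}c_{r}\Phi_{\xi}(r)\equiv 0$ in $\xi$, and the distinct powers of $q^{i\xi}$ attached to distinct radii show that $\{\Phi_{\cdot}(r)\}_{r\in\hat S}$ are linearly independent as functions of $\xi$; this gives $c_{r}=0$, and since $m_{r}\ge 1$ we conclude $h=0$. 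Finite-dimensionality of $L_{2}^{\natural}(S)$ then yields that the restrictions fill $L_{2}^{\natural}(S)$, exactly as in Theorem~5.3.

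I expect the main obstacle to be this last step, the linear independence of the spherical functions at distinct radii. Unlike the lattice case, where $\mathcal{F}h$ is visibly a non-trivial exponential sum with isolated zeros, here one must clear the denominator $q^{i\xi}-q^{-i\xi}$ appearing in $\textbf{c}(\xi)$ and examine the extreme powers of $z=q^{i\xi}$ in the resulting Laurent polynomial, whose leading and trailing terms are controlled by the largest radius in $\hat S$; a short descending induction on $\hat S$ then forces every coefficient to vanish. The analyticity and the Plancherel reduction are routine once this independence is secured.
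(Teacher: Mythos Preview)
Your proposal is correct and follows essentially the same route as the paper: assume some $h\in L_2^{\natural}(S)$ is orthogonal to all restrictions, pass via Plancherel to the finite sum $g(\xi)=\sum_{r\in\hat S}c_r\Phi_\xi(r)$, use analyticity of the spherical functions to conclude $g\equiv 0$ from its vanishing on the positive-measure set $\Pi_\omega$, and then invoke finite-dimensionality of $L_2^{\natural}(S)$.

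The one place you diverge is the step you flag as the ``main obstacle'': deducing $c_r=0$ from $g\equiv 0$. The paper simply writes this sum as ``$\mathcal{H}\psi$'' and asserts its zero set has measure zero, tacitly using that a non-zero $\psi$ gives a non-identically-zero transform. Your Laurent-polynomial / leading-term argument in $z=q^{i\xi}$ is correct, but it is more work than needed. The linear independence of $\{\Phi_{\cdot}(r)\}_{r\in\hat S}$ follows directly from the Plancherel isometry of Theorem~5.7: if $N_r$ denotes the full sphere cardinality at radius $r$, set $a_r=c_r/N_r$ and let $\tilde h\in L_2^{\natural}(G)$ be the radial, finitely supported function with $\tilde h(v)=a_{|v|}$ for $|v|\in\hat S$ and $0$ otherwise; then $\mathcal{H}\tilde h=\sum_r N_r a_r\Phi_\xi(r)=g\equiv 0$, so $\tilde h=0$ by isometry, hence every $c_r=0$. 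This is the content behind the paper's one-line appeal to $\mathcal{H}\psi$, and it replaces your inductive Laurent computation.
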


\begin{proof}
Pick a finite set $S$ and assume that a function $\psi\in
L_{2}^{\natural}(S)$ is orthogonal to all restrictions to the set
$S$ of all functions from a space $PW_{\omega}^{\natural}(G),
1-\eta(q)<\omega<1+\eta(q)$.  It is known \cite{CMS} that the
functions $z\rightarrow \Phi_{z}(v)$ are entire functions for
every fixed $v\in V(G)$. Since $\psi$ has support on a finite set
$S$ its Helgason-Fourier transform $\mathcal{H}$ according to the
formula (5.1) is a finite combination of some functions
$\Phi_{\xi}(v), \xi\in [-\tau/2, \tau/2), v\in V(G)$ and in
particular an analytic function on the real line. It implies that
the set of zeros of $\mathcal{H}\psi$ has measure zero. At the
same time by the Parseval's relation this function should be
orthogonal to any function with support in the set $\Pi_{\omega}$
which is a  set of  positive measure. This shows that there is no
function in $L_{2}^{\natural}(S)$ which is orthogonal to
restrictions to $S$ of all functions  in
$PW_{\omega}^{\natural}(G)$. Because the set $L_{2}^{\natural}(S)$
is finite dimensional  it implies that it is exactly the space of
all restrictions of $PW_{\omega}^{\natural}(G)$ to $S$. The
Theorem is proved.
\end{proof}

On homogeneous trees of order $q+1, q\geq 2,$ the spectrum of
Laplacian $\mathcal{L}$ is separated from zero and in this case
the operator $\mathcal{L}^{s}, s\in \mathbb{R}$ can be used as the
operator $\mathcal{B}$ from the Corollary 3.1 with
$$
\|\mathcal{L}^{s}\|=\omega^{s},
\|\mathcal{L}^{-s}\|=\left(1-\eta(q)\right)^{-s}, q\geq 2, s\in
\mathbb{R}.
$$

The corresponding inequality with $\mathcal{L}^{s}, s\in
\mathbb{R},$ in place of $\mathcal{B}$ means that a function $f\in
PW_{\omega}(G)$ is uniquely determined by the values of its
"derivatives" $\mathcal{L}^{s}f, s\in \mathbb{R},$ on a uniqueness
set $U$.

\begin{thm}
If $G$ is a homogeneous tree then for any $s\in \mathbb{R}$ there
exists a frame  $\{\Psi_{u}^{(s)}\}_{u\in U}$ in the space
$PW_{\omega}(G)$ such that the following reconstruction formula
holds true for all $f\in PW_{\omega}(G)$
$$
f(v)=\sum_{u\in U}\mathcal{L}^{s}f(u)\Psi_{u}^{(s)}(v), v\in V(G).
$$
\end{thm}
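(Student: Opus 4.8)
The plan is to transfer the reconstruction formula of Theorem 4.1 through the operator $\mathcal{L}^{s}$, exploiting the fact that on a homogeneous tree the spectrum of $\mathcal{L}$ does not reach zero. Concretely, for a tree of order $q+1$ with $q\geq 2$ one has $\sigma(\mathcal{L})=[1-\eta(q),1+\eta(q)]$ with $\eta(q)=2\sqrt{q}/(q+1)<1$, so that $1-\eta(q)>0$. Consequently, for every real $s$ the operator $\mathcal{L}^{s}$ restricts to a bounded, self-adjoint, invertible automorphism of $PW_{\omega}(G)$, with $\|\mathcal{L}^{s}\|=\omega^{s}$ and $\|\mathcal{L}^{-s}\|=(1-\eta(q))^{-s}$ (for $s\geq 0$; the symmetric statement holds for $s\leq 0$). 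This spectral gap is precisely the ingredient unavailable for the line or the lattice, where $0\in\sigma(\mathcal{L})$ forces one to work with the shifted operator $I+\mathcal{L}$ as in Theorem 4.2.

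First I would fix a uniqueness set $U$ for $PW_{\omega}(G)$ and, given $f\in PW_{\omega}(G)$, set $g=\mathcal{L}^{s}f$, which again lies in $PW_{\omega}(G)$ since $\mathcal{L}^{s}$ preserves the spectral support. The samples of $g$ on $U$ are exactly the prescribed data $g(u)=\mathcal{L}^{s}f(u)$. Applying Theorem 4.1 to $g$ yields a frame $\{\Theta_{u}\}_{u\in U}$ with $g=\sum_{u\in U}g(u)\,\Theta_{u}$, the series converging in $PW_{\omega}(G)$. I would then apply the bounded operator $\mathcal{L}^{-s}$ to both sides; since $\mathcal{L}^{-s}$ is bounded it commutes with the $L_{2}$-convergent sum, and since point evaluation $f\mapsto f(v)$ is continuous on $L_{2}(G)$ (indeed $|f(v)|\leq\|f\|$), convergence in $L_{2}(G)$ upgrades to pointwise convergence. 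This gives
$$
f(v)=\mathcal{L}^{-s}g(v)=\sum_{u\in U}\mathcal{L}^{s}f(u)\,\Psi_{u}^{(s)}(v),\qquad \Psi_{u}^{(s)}=\mathcal{L}^{-s}\Theta_{u}.
$$
Because $\mathcal{L}^{-s}$ is a bounded invertible automorphism of $PW_{\omega}(G)$, the family $\{\Psi_{u}^{(s)}\}_{u\in U}$, being the image under $\mathcal{L}^{-s}$ of the frame $\{\Theta_{u}\}_{u\in U}$, is again a frame in $PW_{\omega}(G)$, which is the assertion of the Theorem.

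Alternatively, and more in the spirit of the preceding discussion, I would invoke Corollary 3.1 with $\mathcal{B}=\mathcal{L}^{s}$: this produces the Plancherel--Polya inequalities for the data $\{\mathcal{L}^{s}f(u)\}_{u\in U}$, which, upon writing $\mathcal{L}^{s}f(u)=\langle f,\eta_{u}\rangle$ for $\eta_{u}$ the orthogonal projection of $\mathcal{L}^{s}\delta_{u}$ onto $PW_{\omega}(G)$, are precisely the frame inequalities for $\{\eta_{u}\}_{u\in U}$. The Duffin--Schaeffer frame-operator argument of Theorem 4.1 then runs verbatim, with the frame bounds $1$ and $C_{\omega}$ replaced by $\|\mathcal{L}^{s}\|^{-1}$ and $\|\mathcal{L}^{-s}\|C_{\omega}$.

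The only genuinely new point beyond Theorem 4.1 is the invertibility of $\mathcal{L}^{s}$ on $PW_{\omega}(G)$, so the main thing to watch is the spectral gap $1-\eta(q)>0$, which holds exactly when $q\geq 2$; for $q=1$ the construction collapses, in accordance with the fact that the line graph genuinely requires the shifted operator. The remaining verifications---the interchange of $\mathcal{L}^{-s}$ with the infinite sum in the first route, or the norm convergence and Neumann-series invertibility of the frame operator in the second---are identical to those already carried out in the proof of Theorem 4.1 and require no new ideas.
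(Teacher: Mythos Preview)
Your proposal is correct and matches the paper's approach. The paper does not give an explicit proof of this theorem; instead, immediately preceding the statement it observes that on a homogeneous tree of order $q+1$, $q\geq 2$, the spectrum of $\mathcal{L}$ is bounded away from zero, so that $\mathcal{L}^{s}$ can serve as the operator $\mathcal{B}$ in Corollary~3.1 with $\|\mathcal{L}^{s}\|=\omega^{s}$ and $\|\mathcal{L}^{-s}\|=(1-\eta(q))^{-s}$---this is exactly your second route, and your first route (applying Theorem~4.1 to $g=\mathcal{L}^{s}f$ and then pushing through $\mathcal{L}^{-s}$) is an equivalent repackaging of the same argument.
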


In what follows we will obtain  explicit constants for the
Poincare inequality for some specific finite sets of point on
homogeneous trees. What is interesting that these sets have a
large volume and can be even infinite, but  corresponding Poincare
constants are less than one.

Consider a homogeneous tree of order $q+1$. We will say that the
root of this tree belongs to the level zero, the next $q$ vertices
belong to the level one, the next $q^{2}$ belong to the level two
and so on. A level of order $m$ will be denoted as $l_{m}$. Direct
computations show that the following Lemma holds true.

\begin{lem}
On  a homogeneous tree $G$  of order $q$ for any level $S=l_{m}$
of order $m$ the following Poincare inequality holds true
$$
\|\varphi\|\leq
\left(1+\frac{q}{(q+1)^{2}}\right)^{-1/2}\|\mathcal{L}\varphi\|,
\varphi\in L_{2}(l_{m}).
$$
In implies in particular that any finite or infinite set of the
following form
$$
S=\bigcup_{m=0} l_{3m}
$$
is a removable set for any space $PW_{\omega}(G)$ with any
$$
\omega<\alpha(q)=\left(1+\frac{q}{(q+1)^{2}}\right)^{1/2}>1.
$$

 Moreover, there exists a frame of functions
$\{\Theta_{u}\}_{u\in U}, U=V(G)\backslash S,$ in the space
$PW_{\omega}(G)$ such that the following reconstruction formula
holds true for all $f\in PW_{\omega}(G)$
$$
f(v)=\sum_{u\in U}f(u)\Theta_{u}(v), v\in V(G).
$$

Another way to reconstruct $f\in PW_{\omega}(G)$ is by using the
Theorem 1.4 with
$$
\Lambda=\left(1+\frac{q}{(q+1)^{2}}\right)^{-1/2}.
$$
\end{lem}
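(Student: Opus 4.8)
The plan is to prove the single-level Poincaré inequality by a direct computation of $\mathcal{L}\varphi$ from the defining formula (1.5), and then to build the infinite removable set out of these levels using the union lemma (Lemma 3.7) and the $\Lambda$-set removability theorem (Theorem 3.2); the frame reconstruction will follow from Theorem 4.1.

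First I would fix a level $S=l_{m}$ with $m\geq 1$ and a function $\varphi\in L_{2}(l_{m})$, and compute $\mathcal{L}\varphi$ using that the tree is regular of degree $q+1$, so every vertex off the root has one parent and $q$ children. Only three families of vertices carry nonzero values. On $l_{m}$ itself every neighbour of a vertex $v$ lies on $l_{m\pm 1}$, where $\varphi$ vanishes, so $\mathcal{L}\varphi(v)=\varphi(v)$. On $l_{m+1}$ each vertex $x$ has a single neighbour in $l_{m}$, namely its parent $p(x)$, giving $\mathcal{L}\varphi(x)=-\frac{1}{q+1}\varphi(p(x))$. On $l_{m-1}$ a vertex $w$ sees only its $q$ children in $l_{m}$, so $\mathcal{L}\varphi(w)=-\frac{1}{q+1}\sum_{u}\varphi(u)$, the sum taken over the children of $w$. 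Summing squared moduli, and using that each $v\in l_{m}$ is the parent of exactly $q$ vertices of $l_{m+1}$ (each contributing $\frac{1}{(q+1)^{2}}|\varphi(v)|^{2}$), I obtain
$$
\|\mathcal{L}\varphi\|^{2}=\|\varphi\|^{2}+\frac{q}{(q+1)^{2}}\|\varphi\|^{2}+\frac{1}{(q+1)^{2}}\sum_{w\in l_{m-1}}\Big|\sum_{u}\varphi(u)\Big|^{2}.
$$
The decisive observation is that the last, parent-level sum is nonnegative and may simply be discarded, which yields $\|\mathcal{L}\varphi\|^{2}\geq\big(1+\frac{q}{(q+1)^{2}}\big)\|\varphi\|^{2}$, exactly the claimed inequality. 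The root level $m=0$ is checked separately and is strictly more favorable, so the stated constant holds on every level.

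Next I would verify that $S=\bigcup_{m\geq 0}l_{3m}$ satisfies the hypotheses of Lemma 3.7 with $S_{j}=l_{3j}$. Since $b(l_{3j})=l_{3j-1}\cup l_{3j+1}$, the closed set $\overline{S}_{j}=l_{3j-1}\cup l_{3j}\cup l_{3j+1}$ occupies the three consecutive levels indexed $3j-1,3j,3j+1$; for $j_{1}<j_{2}$ these triples are disjoint because $3j_{2}-1\geq 3j_{1}+2>3j_{1}+1$, so the sets $\overline{S}_{j}$ are pairwise disjoint (the case $j=0$, where $\overline{S}_{0}=l_{0}\cup l_{1}$, only helps). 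Each $S_{j}$ is a $\Lambda_{j}$-set with $\Lambda_{j}\leq\big(1+\frac{q}{(q+1)^{2}}\big)^{-1/2}$ by the first part, so Lemma 3.7 gives that $S$ is a $\Lambda$-set with $\Lambda=\sup_{j}\Lambda_{j}=\big(1+\frac{q}{(q+1)^{2}}\big)^{-1/2}$. Theorem 3.2 then shows that $U=V(G)\setminus S$ is a uniqueness (removable) set for every $PW_{\omega}(G)$ with $\omega<1/\Lambda=\alpha(q)$, and $\alpha(q)>1$ is immediate since $\frac{q}{(q+1)^{2}}>0$.

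Finally, the frame reconstruction follows directly from Theorem 4.1: once $U$ is known to be a uniqueness set for $PW_{\omega}(G)$, that theorem produces the dual frame $\{\Theta_{u}\}_{u\in U}$ together with $f(v)=\sum_{u\in U}f(u)\Theta_{u}(v)$, and the alternative reconstruction is the same frame machinery fed with the explicit value $\Lambda=\big(1+\frac{q}{(q+1)^{2}}\big)^{-1/2}$. I expect the only genuine obstacle to be the three-level computation of $\mathcal{L}\varphi$: the point is to recognize that the parent-level contribution is an awkward cross term $\frac{1}{(q+1)^{2}}\sum_{w}\big|\sum_{u}\varphi(u)\big|^{2}$ which cannot be usefully bounded below, but is nonnegative and hence harmlessly dropped, which is precisely what makes the constant come out cleanly (and in fact sharp, with equality when the children of each parent carry values summing to zero).
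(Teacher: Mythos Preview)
Your proposal is correct and is precisely the approach the paper has in mind: the paper offers no detailed argument beyond ``Direct computations show that the following Lemma holds true,'' and your three-level evaluation of $\mathcal{L}\varphi$ on $l_{m-1},l_{m},l_{m+1}$, together with the observation that the parent-level cross term $\frac{1}{(q+1)^{2}}\sum_{w}\big|\sum_{u}\varphi(u)\big|^{2}$ is nonnegative and may be dropped, is exactly that direct computation. The passage from the single-level inequality to the union $S=\bigcup l_{3m}$ via Lemma~3.7, then to removability via Theorem~3.2 and to the frame reconstruction via Theorem~4.1, is likewise the intended route.
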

\bigskip

Note that according to the Lemma 3.5 for all functions  in
$PW_{\omega}(G)$ where $\omega<1$ any set of the form
$$
S=\bigcup_{m=0} l_{2m}
$$
is a removable set.

\bigskip

\section{Applications to  finite graphs}

In this section we consider finite graphs. For a set $S\subset
V(G)$ with a non-empty boundary $bS$ consider the set
$\mathcal{D}(S)$ of all functions $f$ from $L_{2}(G)$ which vanish
on the boundary $bS$. For a subset $S\subset V(G)$ the induced
subgraph determined by all edges that have both endpoints in $S$.
The first Dirichlet eigenvalue of an induced subgraph on $S$ is
defined as follows \cite{Ch}:
\begin{equation}
\lambda_{D}(S)=\inf_{f\in \mathcal{D}(S),f\neq
0}\frac{\left<f,\mathcal{L}f\right>}    {\left<f,f\right>}.
\end{equation}

 Because $L_{2}(S)\subset \mathcal{D}(S)$ we obtain the inequality

\begin{equation}
\|\varphi\|\leq
\frac{1}{\lambda_{D}(S)}\|\mathcal{L}_{G}\varphi\|, \varphi\in
L_{2}(S),
\end{equation}
which is different from (3.7).

Note that  since $L_{2}(S)$ is a subspace of $\mathcal{D}(S)$ the
constant in the inequality (6.2) is not the best possible. Only
when $\overline{S}=S\cup b S$ coincide with entire set $V(G)$ this
inequality  is exact.

If $\delta$ is the isoperimetric dimension of the graph $G$ then
there exists a constant $C_{\delta}$ which depends just on
$\delta$ (see \cite{Ch}) such that
\begin{equation}
\lambda_{D}(S)>C_{\delta}\left(\frac{1}{vol S}\right)^{2/\delta},
vol S=\sum_{v\in S}d(v).
\end{equation}

We obtain the following statement in which we use the same
notations as above.
\begin{thm} If a set $S\subset V(G)$ and an $\omega>0$
satisfy the inequality
$$
\omega<C_{\delta}\left(\frac{1}{vol S}\right)^{2/\delta}
$$
then the set $S$ is removable for  the space $PW_{\omega}(G)$.
\end{thm}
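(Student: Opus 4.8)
The plan is to chain together two results already established in this section. The key observation is that the hypothesis of the theorem is precisely the inequality that makes $S$ a $\Lambda$-set with a controlled Poincar\'e constant, after which Theorem 3.2 delivers removability. First I would invoke the Dirichlet-eigenvalue estimate (6.3), which states that for a graph of isoperimetric dimension $\delta$ there is a constant $C_{\delta}$ with
$$
\lambda_{D}(S)>C_{\delta}\left(\frac{1}{vol S}\right)^{2/\delta}.
$$
Combining this with the inclusion $L_{2}(S)\subset \mathcal{D}(S)$ and the resulting Poincar\'e inequality (6.2),
$$
\|\varphi\|\leq \frac{1}{\lambda_{D}(S)}\|\mathcal{L}_{G}\varphi\|,\quad \varphi\in L_{2}(S),
$$
I would conclude that $S$ is a $\Lambda$-set with
$$
\Lambda=\frac{1}{\lambda_{D}(S)}<\frac{1}{C_{\delta}}\left(vol S\right)^{2/\delta}.
$$

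Next I would translate the hypothesis $\omega<C_{\delta}\left(1/vol S\right)^{2/\delta}$ into a statement about $\Lambda$. The hypothesis says exactly that $\omega<\lambda_{D}(S)$ is implied by (6.3), hence $\omega<1/\Lambda$ since $\Lambda=1/\lambda_{D}(S)$. More carefully, from $\omega<C_{\delta}\left(1/vol S\right)^{2/\delta}<\lambda_{D}(S)$ we obtain $\Lambda\omega=\omega/\lambda_{D}(S)<1$, which is the decisive inequality.

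Finally I would apply Theorem 3.2: since $S$ is a $\Lambda$-set with $\Lambda=1/\lambda_{D}(S)$ and $\omega<1/\Lambda=\lambda_{D}(S)$, the set $S$ is $PW_{\omega}(G)$-removable, meaning $U=V(G)\setminus S$ is a uniqueness set for $PW_{\omega}(G)$. This completes the argument. I do not expect any genuine obstacle here; the proof is essentially a bookkeeping assembly of (6.2), (6.3), and Theorem 3.2. The only point requiring a word of care is making the strict-inequality chain explicit, since Theorem 3.2 needs the strict bound $\omega<1/\Lambda$ rather than $\leq$, but the strict inequality in the hypothesis together with the strict inequality in (6.3) supplies exactly this.
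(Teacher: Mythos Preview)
Your proof is correct and is exactly the argument the paper has in mind: the theorem is stated immediately after (6.2) and (6.3) as a direct consequence, and your assembly of (6.2), (6.3), and Theorem 3.2 reproduces that intended chain. The paper gives no separate proof beyond the phrase ``We obtain the following statement,'' so there is nothing further to compare.
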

The following statement  gives a certain connection between
distribution of eigenvalues and existence of specific sets of
vertices.
\begin{thm}
If a finite graph $G$ has $N=|V(G)|$ vertices and a set $S\subset
V(G)$ is a set of type $\Lambda$ then there are at most $|U|$
eigenvalues (with multiplicities) of $\mathcal{L}$ on the interval
$[0,1/\Lambda)$ where $U=V\setminus S$ and there are at least
$N-|U|$ eigenvalues which belong to
 the interval $[1/\Lambda,\lambda_{\max}]$.
\end{thm}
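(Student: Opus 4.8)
The plan is to translate the $\Lambda$-set hypothesis into a dimension count against the spectral subspaces of $\mathcal{L}$. Since $G$ is finite, let $E$ denote the span of all eigenfunctions of $\mathcal{L}$ whose eigenvalues lie in $[0,1/\Lambda)$; by definition $\dim E = \mathcal{N}[0,1/\Lambda)$. The heart of the matter is to show that $E$ and $L_{2}(S)$ intersect trivially, after which the counting follows from elementary linear algebra.

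First I would record two competing norm estimates. On the one hand, the hypothesis that $S$ is a $\Lambda$-set is exactly $\|\mathcal{L}\varphi\|\geq \Lambda^{-1}\|\varphi\|$ for every $\varphi\in L_{2}(S)$. On the other hand, for any nonzero $f\in E$, writing $f=\sum c_{j}\psi_{j}$ in an orthonormal eigenbasis with eigenvalues $\lambda_{j}<1/\Lambda$, the finiteness of the sum gives $\|\mathcal{L}f\|^{2}=\sum \lambda_{j}^{2}|c_{j}|^{2}<\Lambda^{-2}\|f\|^{2}$, a \emph{strict} inequality (each factor $\Lambda^{-2}-\lambda_{j}^{2}$ is positive and at least one $c_{j}\neq 0$). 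Consequently no nonzero vector can satisfy both bounds, so $E\cap L_{2}(S)=\{0\}$.

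I would then conclude by dimension. Since the two subspaces of $L_{2}(G)$ meet only in $0$, $\dim E+\dim L_{2}(S)\leq N$. As $\dim L_{2}(S)=|S|=N-|U|$, this yields $\mathcal{N}[0,1/\Lambda)=\dim E\leq |U|$, the first assertion. The second is then immediate from the fact that $\mathcal{L}$ has exactly $N$ eigenvalues counted with multiplicity: $\mathcal{N}[1/\Lambda,\lambda_{\max}]=N-\mathcal{N}[0,1/\Lambda)\geq N-|U|$.

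Alternatively, the same bound follows by invoking Theorem 3.2: for each $\omega<1/\Lambda$ the set $U$ is a uniqueness set for $PW_{\omega}(G)$, so the restriction map $f\mapsto (f(u))_{u\in U}$ is injective on the finite-dimensional space $PW_{\omega}(G)$, whence $\dim PW_{\omega}(G)\leq|U|$; letting $\omega\uparrow 1/\Lambda$ recovers $\mathcal{N}[0,1/\Lambda)\leq|U|$. I expect the only delicate point to be the bookkeeping at the endpoint $1/\Lambda$ — making sure the spectral inequality on $E$ is \emph{strict} so that any eigenfunction with eigenvalue exactly $1/\Lambda$ is correctly excluded from $[0,1/\Lambda)$. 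The direct intersection argument handles this cleanly, whereas the uniqueness-set route requires the harmless limiting step $\omega\uparrow 1/\Lambda$.
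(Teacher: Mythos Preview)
Your proposal is correct. Your ``alternative'' route—invoking Theorem~3.2 to conclude that $U$ is a uniqueness set for $PW_{\omega}(G)$ whenever $\omega<1/\Lambda$, and then bounding $\dim PW_{\omega}(G)$ by $|U|$ via injectivity of restriction—is precisely the paper's own (two-sentence) proof. Your primary argument, showing directly that $E\cap L_{2}(S)=\{0\}$ from the pair of competing inequalities $\|\mathcal{L}\varphi\|\geq\Lambda^{-1}\|\varphi\|$ on $L_{2}(S)$ and the strict $\|\mathcal{L}f\|<\Lambda^{-1}\|f\|$ on $E$, is really just the uniqueness-set proof (Theorem~3.2) unpacked and applied at the endpoint subspace in one stroke; the underlying mechanism is identical, though your version handles the half-open interval $[0,1/\Lambda)$ without the limiting step and is in that sense slightly tidier.
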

\begin{proof}
If $S$ is a set of type $\Lambda$ then $U=V\setminus S$ is a
uniqueness set for any space $PW_{\omega}(G)$ with
$\omega<1/\Lambda$. It means that $|U|$ which is the dimension of
the space $L_{2}(U)$ cannot be less than the number of eigenvalues
(with multiplicities)  of $\mathcal{L}$ on the interval $[0,
1/\Lambda)$.
\end{proof}
The Theorem 6.2 implies the Corollaries 1.1 and 1.2 from the
Introduction.

\bigskip

 \textbf{Acknowledgement:} I would like to thank Dr. Meyer
 Pesenson for encouragement and many useful conversations.

\end{document}